\documentclass[draft]{amsart}
\usepackage{amssymb,amsfonts,amsmath,amsthm,cite}
\usepackage{enumerate}
\usepackage{ytableau}
\ytableausetup{boxsize=2em}
\usepackage{graphicx, xcolor}
\usepackage{hyperref}
\hypersetup{colorlinks=true,linkcolor=blue,filecolor=magenta,urlcolor=cyan}
\usepackage{tikz}

\numberwithin{equation}{section}

\allowdisplaybreaks[4]

\theoremstyle{plain}
\newtheorem{theorem}{Theorem}[section]
\newtheorem{lemma}[theorem]{Lemma}
\newtheorem{corollary}[theorem]{Corollary}
\newtheorem{definition}[theorem]{Definition}

\title{A MacMahon Analysis View of 4 Diagonal DSPPs}
\author{Runqiao Li}
\address{[R.L.] University of Texas Rio Grande Valley, School of Mathematical and Statistical Sciences, Edinburg, Texas 78541, United States}
\email{runqiao.li@utrgv.edu or runqiaoli@outlook.com}

\author{Ali K. Uncu}
\address{[A.K.U.] University of Bath, Faculty of Science, Department of Computer Science, Bath, BA2 7AY, UK}
\email{aku21@bath.ac.uk}

\keywords{Polynomial Identities, Skew Double Shifted Plane Partitions, MacMahon Analysis, Infinite Hierarchies}
  
\subjclass[2020]{05A15; Secondary 05A10, 05A17, 05A30, 11B65, 11P81, 11P82, 11P84, 33F10}

\date{\today}

\begin{document}

\begin{abstract}
   We study skew double-shifted plane partitions with three-element profiles using MacMahon’s partition analysis. We present new generating function formulas for these partitions, incorporating an extra bound on the number of non-zero diagonal elements. These objects are closely related to the G\"ollnitz–Gordon and little G\"ollnitz identities. Moreover, we investigate some infinite hierarchies of $q$-series identities that stem from our formulas. We also observe certain palindromic properties within these generating functions. Finally, we examine these same objects from the perspective of linear partitions, which leads to interesting new $q$-series identities.
\end{abstract}

\maketitle

\section{Introduction}\label{sec:Intro}

An \textit{integer partition} $\lambda$ is a weakly decreasing finite list of positive integers. The elements that appear in a partition are called \textit{parts}, and their total is called the \textit{size} or the \textit{weight} of the partition, denoted by $|\lambda|$. For example, the empty list is the only partition of 0, and $(3,2,1)$ is a partition of size 6 into 3 distinct parts.

\textit{Partition identities} are statements between two different sets of partitions where the number of partitions of any size $n_1$ in one set is equal to the number of partitions of size $n_2$ in the other set. For almost all partition identities $n_1=n_2$. These identities are combinatorially interesting. These identities are highly interesting for $q$-series research too. For most partition identities, a combinatorial proof purely using bijections between sets of partitions is hard to come by. Instead, we tend to look at the generating functions for the number of partitions. These are basic special functions with a lot of added structure. One usually establishes such identities using $q$-series tools and summation formulas for basic sums. Studying partition identities and refining the generating functions with possibly more statistics/catalytic variables leads to deeper $q$-series identities and vice versa.

The G\"ollnitz--Gordon and little G\"ollnitz partition identities play a central role in the theory of partitions. They appear in many contexts and have been proven in different ways over the years (to list a few, see \cite{A, AlladiGG, AAG, BridgesUncu, Li, Berkovich_McCoy_Orrick}).

\begin{theorem}[G\"ollnitz--Gordon Identities] \label{thm_GG}
    For $i=1,\ 2$ and any non-negative integer $n$, the number of partitions of $n$ into parts $\geq 2i-1$ with gaps between parts $\geq 2$ and no consecutive even parts is equal to the number of partitions into parts congruent to $2+(-1)^i$, $4$, $6-(-1)^i$ modulo $8$.
\end{theorem}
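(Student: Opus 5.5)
The plan is to prove Theorem~\ref{thm_GG} analytically, by establishing the sum--product identity
\[
\sum_{n\ge 0} \frac{q^{n^2+2(i-1)n}(-q;q^2)_n}{(q^2;q^2)_n} \;=\; \frac{1}{(q^{2+(-1)^i};q^8)_\infty (q^4;q^8)_\infty (q^{6-(-1)^i};q^8)_\infty},\qquad i=1,2,
\]
and then interpreting each side combinatorially. The right-hand side is plainly the generating function for partitions into parts congruent to $2+(-1)^i,4,6-(-1)^i \pmod 8$. So the work splits into two pieces: showing that the left-hand side counts the gap-restricted partitions, and proving the identity itself.

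For the combinatorial piece, fix the number of parts $n$. Among partitions into $n$ parts with all gaps $\ge 2$ and smallest part $\ge 2i-1$, the minimal configuration is $2i-1,2i+1,\dots,2n+2i-3$. Its weight is $n^2+2(i-1)n$, and every part in it is odd. Over this staircase I add an excess sequence $e_1\le e_2\le\dots\le e_n$. Raising a part by an odd amount makes it even; the condition ``no two consecutive even parts'' (gap exactly $2$ between two even parts is forbidden) then becomes a condition on where the parity of the excess may jump. The clean way to organise this is to write the excess as twice a partition into at most $n$ parts, coming from $1/(q^2;q^2)_n$, together with a choice of a subset of positions where an extra $1$ is inserted. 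Distinct parts of the form $2k-1$ in $(-q;q^2)_n$ encode exactly those insertions: making the $k$-th part from the top even and shifting everything above it by one keeps the gaps legal and never produces two consecutive evens at distance $2$. I will verify that this correspondence is a bijection, which gives the left-hand side as the generating function for the gap-restricted partitions. This is the standard Andrews-style argument, and I will check the small-part boundary separately for $i=1,2$.

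For the analytic piece, I will use the $q$-binomial/Lebesgue-type rewriting $(-q;q^2)_n=\sum_k {n\brack k}_{q^2} q^{k^2}$. This turns the left-hand side into a double sum $\sum_{n,k} q^{n^2+k^2+2(i-1)n}/\bigl((q^2;q^2)_k(q^2;q^2)_{n-k}\bigr)$. Alternatively, and more directly, I would recognise the sum as a specialisation of the Rogers--Ramanujan-type identities proved via Slater's list, specifically Slater (36) and (34) with $q\to q$ and base $q^2$. To justify this internally, I will apply Bailey's lemma with the Bailey pair relative to $a=1$ (respectively $a=q^2$) in base $q^2$ built from the $q$-Gauss sum. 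That yields a theta series $\sum_r (-1)^r q^{4r^2+(2i-1)\cdot 2r}\cdots$, and Jacobi's triple product converts it into the product $(q^{2i-1};q^8)_\infty(q^{9-2i};q^8)_\infty(q^8;q^8)_\infty$ divided by $(q^2;q^2)_\infty$. Simplifying $(q^2;q^2)_\infty/(q^8;q^8)_\infty$ against the triple product factors gives the modulus-$8$ product above.

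The main obstacle is the analytic identity: choosing the correct Bailey pair, including the $(-q;q^2)_n$ factor, which calls for the pair arising from the $q$-Pfaff--Saalschütz or $q$-Gauss sum with a parameter specialised to $-q$. I would first try the Bailey lemma with $\rho_1=-q$ and $\rho_2\to\infty$ in base $q^2$, applied to the unit Bailey pair. If that becomes messy, the fallback is Andrews' $q$-difference/functional-equation method. In that approach I define $G_i(x)$ refining by the number of parts, derive $G_1(x)-G_2(x)=xq\,G_2(xq^2)+\dots$ from the gap conditions, and show that the product side satisfies the same recurrence with the same initial conditions.
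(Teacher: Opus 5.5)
The paper does not prove Theorem~\ref{thm_GG}. It quotes the theorem, together with its sum--product forms, from the literature. The sum sides also follow from the partition-analysis formulas of Section~\ref{sec:LinearPartition} at $x_i=q$. Simply quoting the identities from Slater's list would therefore put you level with the paper, but the self-contained argument you outline has a real hole.

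Your combinatorial half is fine. After removing the odd staircase, the excess $\mu_1\ge\dots\ge\mu_n\ge 0$ is exactly a partition with no repeated odd part. Let $\epsilon_j\in\{0,1\}$ be the parity of $\mu_j$ and $S=\{j:\epsilon_j=1\}$. Then $\nu_j=\tfrac12(\mu_j-\epsilon_j)-\#\{k\in S:k>j\}$ gives the bijection onto pairs ($\nu$ into at most $n$ parts, $S\subseteq\{1,\dots,n\}$). One correction: the insertion for $k\in S$ adds $1$ to the $k$-th largest part and $2$, not $1$, to each larger part. That is where $q^{2k-1}$ comes from.

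The gap is the sum--product identity itself, which is the whole content of the theorem, and your Bailey computation does not produce it. Feeding the unit pair into the limiting lemma with $\rho_1=-q$, $\rho_2\to\infty$ gives left-hand side $1$. Using that step as the first link of a chain and then letting $\rho_1,\rho_2\to\infty$ gives $\sum_n q^{2n^2}/((q^2;q^2)_n(-q;q^2)_n)$, a different series. You need the pair $\beta_n=1/(q^2;q^2)_n$, in base $q^2$, obtained from the unit pair by one step with $\rho_1,\rho_2\to\infty$:
for $i=1$, $\alpha_0=1$, $\alpha_n=(-1)^nq^{3n^2-n}(1+q^{2n})$ relative to $a=1$;
for $i=2$, $\alpha_n=(-1)^nq^{3n^2+n}(1-q^{4n+2})/(1-q^2)$ relative to $a=q^2$.
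Only then apply $\rho_1=-q$, $\rho_2\to\infty$.

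The outcome also differs from your prediction. It is $\frac{(-q;q^2)_\infty}{(q^2;q^2)_\infty}\sum_{r}(-1)^rq^{4r^2+(2i-1)r}=\frac{(-q;q^2)_\infty}{(q^2;q^2)_\infty}(q^{5-2i},q^{3+2i},q^8;q^8)_\infty$. So the prefactor carries $(-q;q^2)_\infty$, the linear term is $(2i-1)r$ rather than $2(2i-1)r$, and the residues are $3,5$ for $i=1$ and $1,7$ for $i=2$ --- the reverse of yours. Your intermediate $(q^{2i-1},q^{9-2i},q^8;q^8)_\infty/(q^2;q^2)_\infty$ cannot be turned into the target; for $i=1$ it begins $1-q+\cdots$. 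The correct finish uses $(-q;q^2)_\infty/(q^2;q^2)_\infty=1/((q;q^2)_\infty(q^4;q^4)_\infty)$ with $(q;q^2)_\infty=(q,q^3,q^5,q^7;q^8)_\infty$ and $(q^4;q^4)_\infty=(q^4,q^8;q^8)_\infty$. Everything then cancels to $1/(q^{2+(-1)^i},q^4,q^{6-(-1)^i};q^8)_\infty$.

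Your fallback is missing the same piece. The gap conditions give $G_2(x)=G_1(xq^2)$ and $G_1(x)=(1+xq)G_1(xq^2)+xq^2G_1(xq^4)$. Note the $xq$ term is $xqG_2(x)$, not $xqG_2(xq^2)$. The sum side $\sum_n x^nq^{n^2}(-q;q^2)_n/(q^2;q^2)_n$ satisfies this equation, but that only re-proves the combinatorial half. The product has no $x$, so ``the product side satisfies the same recurrence'' has no meaning yet. You would first need an $x$-deformed theta-type series (Andrews' $J$-function) obeying the same equation and reducing to the product at $x=1$. Constructing it is exactly the step your proposal leaves open.
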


\begin{theorem}[Little G\"ollnitz Identities]\label{thm_lG}
    For $i=1,\ 2$ and any non-negative integer $n$, the number of partitions of $n$ into parts $\geq i$ with gaps between parts $\geq 2$ and no consecutive odd parts is equal to the number of partitions into parts congruent to $i$, $4-(-1)^i$, $5+i$ modulo $8$.
\end{theorem}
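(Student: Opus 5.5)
The plan is to prove the Little G\"ollnitz identities in the standard $q$-series way: compute the generating function of the gap-condition side as an explicit basic hypergeometric sum, then turn that sum into the product for the congruence side.

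\textbf{Step 1: the sum side.} Let $\lambda$ have exactly $k$ parts, all $\geq i$, with consecutive parts differing by at least $2$ and no two consecutive odd parts.
\begin{itemize}
\item The minimal such configuration for $i=1$ is $1,4,5,8,9,\dots$ (odd and even alternating wherever the gap is exactly $2$). Its weight is $k^2$.
\item The slack above the minimal configuration is governed by the parity pattern. Parts may shift upward in steps of $2$ freely, and a parity change costs a unit shift.
\item The shifts of size $2$ are counted by a partition into at most $k$ parts, contributing $1/(q^2;q^2)_k$.
\item The parity choices contribute a factor $(-q^{-1};q^2)_k$ for $i=1$, which becomes $(-q;q^2)_k$ after normalising the minimal weight.
\end{itemize}
Carrying out this bookkeeping carefully should give
\[
\sum_{k\ge 0}\frac{q^{k^2+k}(-q^{-1};q^2)_k}{(q^2;q^2)_k}\quad (i=1),\qquad \sum_{k\ge0}\frac{q^{k^2+k}(-q;q^2)_k}{(q^2;q^2)_k}\quad (i=2).
\]
I would verify these expressions against the first several coefficients. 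Alternatively, I would derive them rigorously via a $q$-difference equation. Let $f_i(x)$ be the generating function with $x$ marking the number of parts. Removing the smallest part and shifting the remaining parts gives a coupled pair of functional equations for $f_1,f_2$. The coefficients of $x^k$ then satisfy a first-order recurrence, and solving that recurrence yields the sums above.

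\textbf{Step 2: the product side.} The generating function for partitions into parts $\equiv i,4-(-1)^i,5+i \pmod 8$ is
\[
\frac{1}{(q^{i},q^{4-(-1)^i},q^{5+i};q^8)_\infty}.
\]
For $i=1$ the residues are $1,5,6$, and for $i=2$ they are $2,3,7$. I would rewrite this product in the form of Slater's list. Using $(q^{a};q^8)_\infty(q^{a+4};q^8)_\infty=(q^a;q^4)_\infty$, for $i=2$ the product becomes
\[
\frac{(-q^3;q^4)_\infty\,\cdots}{(q^2;q^2)_\infty}\times(\text{theta factor}).
\]

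\textbf{Step 3: matching the two sides.} I would apply a known identity, namely Slater's identities (34)/(36) type, to identify the Step 1 sums with the products of Step 2. These identities are themselves proved by the Jacobi triple product combined with Bailey's lemma, or by a $q$-Gauss/Heine transformation.

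\textbf{Main obstacle.} I expect the difficulty to be the sum-to-product step. Concretely, I would start from a Bailey pair relative to $q^2$ with the $(-q;q^2)_k$ factor, of the kind appearing in Slater's list. I would insert it into Bailey's lemma with the limit $\rho_1,\rho_2\to\infty$. The resulting bilateral series would then be summed by the Jacobi triple product in base $q^8$.

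Deriving the sum side combinatorially is routine but error-prone, so I would double-check it numerically before relying on it.
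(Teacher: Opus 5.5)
The paper does not prove Theorem~\ref{thm_lG}. It quotes the result from the literature, displays the sum--product forms for $G'_1$ and $G'_2$ in Section~\ref{sec:Intro}, and uses the theorem only as an input, e.g.\ to deduce \eqref{DSPP_littleG1}.

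Your target sums in Step~1 agree with those forms, and your fallback derivation of them works. Removing the smallest part gives $f_1(x)=f_2(x)+xq\,f_2(xq^2)$, since after a smallest part $1$ the next part is at least $4$. It also gives $f_2(x)=f_1(xq^2)+xq^2f_2(xq^2)$. Hence $f_2(x)=(1+xq^2)f_2(xq^2)+xq^3f_2(xq^4)$, which for the coefficients of $f_2$ reads $(1-q^{2n})a_n=q^{2n}(1+q^{2n-1})a_{n-1}$. The coefficients of $f_1$ are then $a_n+q^{2n-1}a_{n-1}=q^{n^2+n}(-q^{-1};q^2)_n/(q^2;q^2)_n$.

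Rely on this and drop the combinatorial sketch, which is wrong as written. The sequence $1,4,5,8,9,\dots$ violates the gap condition. The minimal $k$-part configuration for $i=1$ is $1,4,6,\dots,2k$, of weight $k^2+k-1$, not $k^2$.

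The genuine gap is Step~3. Slater's (34) and (36) are the G\"ollnitz--Gordon identities of Theorem~\ref{thm_GG}, not the little G\"ollnitz ones. The Bailey route you outline also cannot reach the required products, for two reasons:
\begin{enumerate}[(a)]
\item With $\rho_1,\rho_2\to\infty$ in base $q^2$, the left side carries $q^{2n^2}$ rather than $q^{n^2+n}$.
\item The right side is a Jacobi triple product divided by $(aq^2;q^2)_\infty$. Up to finitely many factors, its product exponents are invariant under $r\mapsto 8-r$ modulo $8$, as for the G\"ollnitz--Gordon residues $\{1,4,7\}$ and $\{3,4,5\}$. The residue sets $\{1,5,6\}$ and $\{2,3,7\}$ are not.
\end{enumerate}
Concretely, $1/(q,q^5,q^6;q^8)_\infty=(-q^2;q^2)_\infty(-q;q^4)_\infty$ and $1/(q^2,q^3,q^7;q^8)_\infty=(-q^2;q^2)_\infty(-q^3;q^4)_\infty$. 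Here $(-q^{c};q^4)_\infty$ is only half of a theta product, so the ``theta factor'' in your Step~2 does not exist. This is in line with the remark in Section~\ref{sec:Conclusions} that no polynomial analogue of the little G\"ollnitz product sides was found.

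The missing ingredient is Lebesgue's identity
\[
\sum_{n\ge0}\frac{q^{n^2+n}(a;q^2)_n}{(q^2;q^2)_n}=(-q^2;q^2)_\infty\,(aq^2;q^4)_\infty ,
\]
which is the $b\to\infty$ case of the $q$-Kummer (Bailey--Daum) sum. So your parenthetical ``$q$-Gauss/Heine'' alternative points the right way; you just have to commit to it. The identity also follows directly:
\begin{itemize}
\item expand $(a;q^2)_n$ by \eqref{eq:FQBinom1} in base $q^2$ and interchange the sums;
\item evaluate the inner sum by Euler's identity $\sum_{m\ge0}q^{m^2-m}z^m/(q^2;q^2)_m=(-z;q^2)_\infty$;
\item evaluate the remaining sum by the base-$q^4$ version of the same identity, using $(q^2;q^2)_j(-q^2;q^2)_j=(q^4;q^4)_j$.
\end{itemize}
Now take $a=-q^{-1}$ and $a=-q$, and use $(-q^2;q^2)_\infty=1/(q^2;q^4)_\infty$ and $(-q^c;q^4)_\infty=(q^{2c};q^8)_\infty/(q^c;q^4)_\infty$. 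This gives exactly $1/(q,q^5,q^6;q^8)_\infty$ and $1/(q^2,q^3,q^7;q^8)_\infty$, with no theta function or Bailey lemma needed.
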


Let $G_i$ be the set of partitions such that parts $\geq 2i-1$, gaps between parts $\geq 2$, and no consecutive even parts. Let $G'_i$ be the set of partitions such that parts $\geq i$, gaps between parts $\geq 2$, and no consecutive odd parts. Then, the G\"ollnitz-Gordon identities and the little G\"ollnitz identities have the following analytic forms.
\begin{align}
\sum_{\lambda\in G_1}q^{|\lambda|}=\sum_{n=0}^{\infty}\frac{q^{n^{2}}(-q;q^2)_n}{(q^2;q^2)_n}=\frac{1}{(q,q^4,q^7;q^8)_{\infty}},\\
\label{G2_prod}
\sum_{\lambda\in G_2}q^{|\lambda|}=\sum_{n=0}^{\infty}\frac{q^{n^{2}+2n}(-q;q^2)_n}{(q^2;q^2)_n}=\frac{1}{(q^3,q^4,q^5;q^8)_{\infty}},\\
\sum_{\lambda\in G'_1}q^{|\lambda|}=\sum_{n=0}^{\infty}\frac{q^{n^2+n}(-q^{-1};q^2)_{n}}{(q^2;q^2)_{n}}=\frac{1}{(q,q^5,q^6;q^8)_{\infty}},
\\\label{eq:QseriesG2}
\sum_{\lambda\in    G'_2}q^{|\lambda|}=\sum_{n=0}^{\infty}\frac{q^{n^2+n}(-q;q^2)_{n}}{(q^2;q^2)_{n}}=\frac{1}{(q^2,q^3,q^7;q^8)_{\infty}}.
\end{align}
Here and in the sequel, we adopt the standard $q$-Pochhammer symbol defined by
$$(a;q)_{0}:=1,\ (a;q)_n:=\prod_{i=0}^{n-1}(1-aq^i)\ \text{for $n\geq1$},\ \text{and}\ (a;q)_{\infty}=\lim_{n\to\infty}\prod_{i=0}^{n}(1-aq^i),$$
where $a$ and $q$ are complex variables, and the last expression converges when $|q|<1$. We shall also adopt the following abbreviations.
$$(a_1,a_2,\ldots,a_r;q)_n:=\prod_{i=1}^{r}(a_i;q)_n\ \text{and}\ (a_1,a_2,\ldots,a_r;q)_{\infty}:=\prod_{i=1}^{r}(a_i;q)_{\infty}.$$
And, the $q$-binomial coefficient is defined as
$${n\brack n}_q:=\left\{\begin{array}{cc}
   \frac{(q;q)_n}{(q;q)_m(q;q)_{n-m}}  & \text{if $n\geq m\geq0$,}  \\
    0 & \text{otherwise}. 
\end{array}\right.$$

Next, we introduce a partial order on the set of partitions. Giving two partition $\lambda=(\lambda_1,\lambda_2,\lambda_3,\ldots)$ and $\mu=(\mu_1,\mu_2,\mu_3,\ldots)$, we say $\lambda\succeq\mu$ if $\lambda_1\geq\mu_1\geq\lambda_2\geq\mu_2\geq\cdots$ and $\lambda\preceq\mu$ if $\mu_1\geq\lambda_1\geq\mu_2\geq\lambda_2\geq\cdots$.
\begin{definition}
A {\it skew double shifted plane partition} (DSPP) of {\it width} $h$ with {\it profile} $\delta=\delta_1\delta_2 \dots\delta_h=(\delta_1, \dots, \delta_h) \in \{0, 1\}^h$ is an $(h+1)$-tuple of integer partitions $(\lambda^0, \dots, \lambda^h)$ such that $\lambda^{j-1} \succeq \lambda^j$ (resp. $\lambda^{j-1} \preceq \lambda^j$) if $\delta_j =0$ (resp. if $\delta_j=1$).  We define the {\it size} as
$$
|\lambda|:=\sum_{j=0}^{h} |\lambda^j|.
$$
\end{definition}

Let $DSPP_\delta$ be the set of all DSPPs with the profile $\delta$. We define the \textit{diagonal length} of a DSPP as the maximum of the number of parts of the partitions that appear in it. 

For example, the DSPP with profile 010 and integer partitions $( (2,2), (2,1),(3,2,1),(2,2,1))$ of size 18 and diagonal length 3 can be represented as the Young tableaux:
\begin{center}
\begin{ytableau}
\none & 3 & 2\\
2 & 2 & 2 & 2\\
\none & 2 & 1 & 1 & 1
\end{ytableau},
\end{center}
where the partitions are placed on the diagonals of the DSPP. The empty DSPP is the unique DSPP with size 0 and diagonal length 0, and it belongs to all the sets $DSPP_\delta$.

Recently, Bridges and the second author \cite{BridgesUncu} showed that the product generating functions of G\"ollnitz--Gordon and little G\"ollnitz identities appear together in the same family of DSPPs that has 4 diagonals in their shifted Young tableaux. In particular, they proved that the following identities hold using Han--Xiang's product generating function formulas \cite{HX}.

Let $F_{abc}(q)$ be the generating function for the number of DSPPs with the profile $abc$, where these objects will be defined in the next section in detail; then we have the following product generating function formulas.
\begin{align}
\label{F010_product}
F_{010}(q):=&\sum_{\lambda\in\text{DSPP}_{010}}q^{|\lambda|}=\frac{1}{(q;q)_{\infty}(q,q^4,q^7;q^8)_{\infty}},\\
\label{F000_product}
F_{000}(q):=&\sum_{\lambda\in\text{DSPP}_{000}}q^{|\lambda|}=\frac{1}{(q;q)_{\infty}(q^3,q^4,q^5;q^8)_{\infty}}.
\intertext{Equations \eqref{F010_product} and \eqref{F000_product} show that the DSPP generating functions with profiles $010$ and $000$ are related to the first and second Göllnitz-Gordon identities with an extra $(q;q)^{-1}_\infty$ term, respectively.}
\label{DSPP_littleG1}
F_{001}(q):=&\sum_{\lambda\in\text{DSPP}_{001}}q^{|\lambda|}=\frac{1}{(q;q)_{\infty}(q,q^5,q^6;q^8)_{\infty}},\\
\label{DSPP_littleG2}
F_{100}(q):=&\sum_{\lambda\in\text{DSPP}_{100}}q^{|\lambda|}=\frac{1}{(q;q)_{\infty}(q^2,q^3,q^7;q^8)_{\infty}},    
\end{align}
This shows that the DSPP generating functions with profiles $001$ and $100$ are related to the first and second little Göllnitz identities with an $(q;q)^{-1}_\infty$ term, respectively. 

We study (and prove) these identities again. This time through the lens of MacMahon partition analysis, as done in \cite{LiUncu}, to discover refined generating functions for partitions that satisfy the gap conditions. Doing so also leads to the discovery of infinite hierarchies of $q$-series identities using the Bailey machinery. 

\begin{theorem}\label{main_thm} For any non-negative integer $p$,
\begin{align}
    \nonumber&\sum_{\substack{i,j,k\geq 0\\m_p\geq \dots\geq m_{1}\geq 0}} \frac{q^{2m_p^2+2m_{p-1}^2+\dots +2m_1^2+i^2+j^2 + k^2} }{(q^2;q^2)_{m_p - m_{p-1}}\dots (q^2;q^2)_{m_2-m_1}(q^2;q^2)_{2m_1}}  {m_1 \brack i}_{q^2} { i - k\brack j}_{q^2} {j\brack k}_{q^2} \\ \label{eq_double_baileyd_P010lim}&\hspace{1cm}= \frac{(q^{40+64p};q^{40+64p})_\infty}{(q^2;q^2)_\infty}\left( (q^{19+32p}, q^{21+32p};q^{40+64p})_\infty + q^{1+2p}(q^{11+16p}, q^{29+48p};q^{40+64p})_\infty
    \right),\\\nonumber\\
    \nonumber&\sum_{\substack{i,j,k\geq 0\\m_p\geq \dots\geq m_{1}\geq 0}} \frac{q^{2m_p^2+2m_{p-1}^2+\dots +2m_1^2+i^2+j^2 + k^2+2j} }{(q^2;q^2)_{m_p - m_{p-1}}\dots (q^2;q^2)_{m_2-m_1}(q^2;q^2)_{2m_1}}  {m_1 \brack i}_{q^2} { i - k-1\brack j}_{q^2} {j\brack k}_{q^2} \\ \label{eq_double_baileyd_P000lim}& \hspace{1cm}=q^{1+2p}\frac{(q^{40+64p};q^{40+64p})_\infty}{(q^2;q^2)_\infty}\left( (q^{9+16p}, q^{31+48p};q^{40+64p})_\infty + q^{3(1+2p)}(q, q^{39+64p};q^{40+64p})_\infty
    \right),\\
    \nonumber\\
    \nonumber&\sum_{\substack{i,j,k\geq 0\\m_p\geq \dots\geq m_{1}\geq 0}} \frac{q^{2m_p^2+2m_{p-1}^2+\dots 2m_2^2+4m_1^2-2im_1+i^2-2ij+j^2 + k^2} }{(q^2;q^2)_{m_p - m_{p-1}}\dots (q^2;q^2)_{m_2-m_1}(q^2;q^2)_{2m_1}} {m_1 \brack i}_{q^2} { i - k\brack j}_{q^2} {j\brack k}_{q^2} \\ \label{eq_double_baileyd_P010_dual_lim}&\hspace{1cm}=\frac{(q^{24+64p};q^{24+64p})_\infty}{(q^2;q^2)_\infty}\left( (q^{11+32p}, q^{13+32p};q^{24+64p})_\infty + q^{1+2p}(q^{5+16p}, q^{19+48p};q^{24+64p})_\infty
    \right),\\
    \nonumber\\
    \nonumber&\sum_{\substack{i,j,k\geq 0\\m_p\geq \dots\geq m_{1}\geq 0}} \frac{q^{2m_p^2+2m_{p-1}^2+\dots 2m_2^2+4m_1^2-2im_1+i^2-2ij+j^2 + k^2} }{(q^2;q^2)_{m_p - m_{p-1}}\dots (q^2;q^2)_{m_2-m_1}(q^2;q^2)_{2m_1}} {m_1 \brack i}_{q^2} { i - k-1\brack j}_{q^2} {j\brack k}_{q^2} \\ \label{eq_double_baileyd_P000_dual_lim}&\hspace{1cm}=q^{1+2p}\frac{(q^{24+64p};q^{24+64p})_\infty}{(q^2;q^2)_\infty}\left( (q^{7+16p}, q^{17+48p};q^{24+64p})_\infty - q^{3(1+2p)-1}(q, q^{23+64p};q^{24+64p})_\infty
    \right).
\end{align}
\end{theorem}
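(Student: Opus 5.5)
The plan is to view each left-hand side as the $p$-fold iterate of a Bailey chain applied to a single seed identity. The seed comes from the MacMahon partition analysis of the profiles $010$ and $000$, with an extra bound $N$ on the diagonal length. The factor structure
\[
\frac{q^{2m_p^2+\dots+2m_2^2}}{(q^2;q^2)_{m_p-m_{p-1}}\cdots(q^2;q^2)_{m_2-m_1}}
\]
is exactly what $p-1$ further applications of Bailey's lemma in base $q^2$ with $\rho_1,\rho_2\to\infty$ produce. The factor $q^{2m_1^2}/(q^2;q^2)_{2m_1}$ (or $q^{4m_1^2}/(q^2;q^2)_{2m_1}$ in the dual cases) is the first step.

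\textbf{Step 1: finite seed.} From the MacMahon analysis of $DSPP_{010}$ and $DSPP_{000}$, I will derive finite polynomial identities for the inner sums
\[
\beta_{m}=\frac{1}{(q^2;q^2)_{2m}}\sum_{i,j,k}q^{i^2+j^2+k^2}{m\brack i}_{q^2}{i-k\brack j}_{q^2}{j\brack k}_{q^2},
\]
together with its $(i-k-1,\,+2j)$ variant. Each will be written in the form $\beta_m=\sum_{r\le m}\alpha_r/\big((q^2;q^2)_{m-r}(aq^2;q^2)_{m+r}\big)$ with $a=1$. Concretely, I will evaluate the triple sum by summing over $k$ and then $j$ via the $q$-Chu--Vandermonde identity, reducing it to a single sum in $i$. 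I then aim to express it as a signed combination of $q$-binomials ${2m\brack m-r}_{q^2}$ with quadratic exponents in $r$. This is the polynomial analogue of the Göllnitz--Gordon products (\ref{F010_product}) and (\ref{F000_product}) refined by the diagonal length. The resulting $\alpha_r$ should split by $r$ modulo $2$, producing the two theta-type terms visible on the right-hand sides; the relative factor $q^{1+2p}$ at $p=0$ reflects the odd-$r$ class.

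\textbf{Step 2: iterate.} With $(\alpha,\beta)$ a Bailey pair relative to $a=1$ in base $q^2$, I will apply Bailey's lemma $p$ times with $\rho_1,\rho_2\to\infty$. Each application multiplies $\alpha_r$ by $q^{2r^2}$ and introduces one more summation index $m_\ell$. Letting $m_p\to\infty$ gives
\[
\frac{1}{(q^2;q^2)_\infty}\sum_r q^{2(p+1)r^2}\alpha_r .
\]
Matching exponents, the first application (the one converting $\beta_{m_1}$ to weight $q^{2m_1^2}$) is already part of the seed, which explains the moduli $40+64p$ and $24+64p$ in (\ref{eq_double_baileyd_P010lim})--(\ref{eq_double_baileyd_P000_dual_lim}). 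I will then apply the Jacobi triple product separately to the even-$r$ and odd-$r$ parts to obtain the two products in each identity. The dual identities (\ref{eq_double_baileyd_P010_dual_lim}) and (\ref{eq_double_baileyd_P000_dual_lim}) come from the same seed via the Bailey-pair duality $q\to q^{-1}$. This duality accounts for the altered quadratic form $4m_1^2-2im_1+i^2-2ij+j^2$ and for the sign change in the last identity. Equivalently, one can use the $a=1$ Bailey lattice step that changes the quadratic weight at the first level.

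\textbf{Main obstacle.} The hard part will be Step 1: obtaining the closed-form $\alpha_r$ by collapsing the nested triple $q$-binomial sum into a single alternating sum of ${2m\brack m-r}$ terms. I would first compute $\beta_m$ for small $m$ symbolically and guess $\alpha_r$ by inverting the Bailey relation. I would then prove the guessed polynomial identity either by a recurrence in $m$ (creative telescoping / $q$-Zeilberger on both sides) or by reading it off the MacMahon-analysis generating function with the bounded diagonal length. Once $\alpha_r$ is verified, the remaining bookkeeping of exponents and the triple product evaluations is routine.
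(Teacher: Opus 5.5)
\textbf{There is a genuine gap in Step 1, the seed identity.} Your Step 2 is the paper's mechanism: \cite[Theorem 2.1]{BerkUncu} is the polynomial form of Bailey's lemma relative to $a=1$ in base $q^2$ with $\rho_1,\rho_2\to\infty$. The duals likewise come from sending $q\to q^{-1}$ in the finite seed and iterating again. But Step 1 is where all the content lies, and you do not establish it.

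Your concrete plan is to close the $j,k$ sums by $q$-Chu--Vandermonde, and this cannot work. The sum $\sum_{j,k}q^{j^2+k^2}{i-k\brack j}_{q^2}{j\brack k}_{q^2}$ is the bounded G\"ollnitz--Gordon polynomial. At $q=1$ it equals $\sum_k\binom{i-k}{k}2^{i-2k}$, the Pell numbers $1,2,5,12,29,\dots$. These are not hypergeometric in $i$, so no summation formula collapses it. The MacMahon analysis does not help either: it only yields the fermionic double sums and their recurrences, never an expansion in ${2L\brack L-r}_{q^2}$.

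The paper gets the seed from two external inputs:
\begin{itemize}
\item the Berkovich--McCoy--Orrick bosonic formulas \eqref{BMO_GG1_bosonic} and \eqref{BMO_GG2_bosonic}, which write the inner double sum as an alternating sum of $q$-trinomials of index $r\in\{4j,4j+1\}$ (resp.\ $\{4j+1,4j+2\}$), weighted by $(-1)^jq^{4j^2+j}$ (resp.\ $(-1)^jq^{4j^2+3j}$);
\item the Berkovich--Uncu summation \cite[Theorem 3.5]{BU_elementary}, which converts $\sum_i q^{i^2}{L\brack i}_{q^2}$ applied to the trinomial of index $r$ into $q^{r^2}{2L\brack L-r}_{q^2}$.
\end{itemize}
Together these give the bilateral seed ($j\in\mathbb{Z}$, all other $\alpha_r=0$):
\begin{itemize}
\item for $010$: $\alpha_{4j}=(-1)^jq^{20j^2+j}$ and $\alpha_{4j+1}=(-1)^jq^{20j^2+9j+1}$;
\item for $000$: $\alpha_{4j+1}=(-1)^jq^{20j^2+11j+1}$ and $\alpha_{4j+2}=(-1)^jq^{20j^2+19j+4}$.
\end{itemize}
So the split is by $r$ modulo $4$, not modulo $2$. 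The $q^{20j^2}$ is $q^{4j^2}\cdot q^{r^2}$, not a hidden Bailey step. Your fallback, guessing $\alpha_r$ and verifying the finite identity by creative telescoping, would be a legitimate alternative to these two citations. As written, though, it is not carried out, and the shape you predict for $\alpha_r$ is wrong.

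\textbf{Your exponent bookkeeping is also inconsistent.} $p$ applications of Bailey's lemma multiply $\alpha_r$ by $q^{2pr^2}$, not $q^{2(p+1)r^2}$. With the seed above, $q^{2p(4j)^2}=q^{32pj^2}$ and $q^{2p(4j+1)^2}=q^{32pj^2+16pj+2p}$. These give exactly the modulus $40+64p$ and the prefactor $q^{1+2p}$, whereas your exponent would give modulus $104+64p$.

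For the duals, ${2L\brack L-r}_{q^{-2}}=q^{-2(L^2-r^2)}{2L\brack L-r}_{q^2}$ turns $20j^2$ into $12j^2$, hence the modulus $24+64p$. It also produces the factor $q^{2m_1^2-2im_1}$, which combines with the Bailey factor into $q^{4m_1^2-2im_1}$. The minus sign in \eqref{eq_double_baileyd_P000_dual_lim} does not come from the duality. It comes from the triple product factor $(q^{-1},q^{25+64p};q^{24+64p})_\infty=-q^{-1}(q,q^{23+64p};q^{24+64p})_\infty$.
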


The rest of this paper is organized as follows. In Section~\ref{sec:PA} we give a brief introduction of MacMahon's Partition Analysis, and we deduce the crude form of $F_{010}(n)$ as an example. Section~\ref{sec:010},\,\ref{sec:000},\ref{sec:001}, and \ref{sec:100} are devoted to DSPP's with profile $010$, $000$, $001$, and $100$, respectively. In Section~\ref{sec:Palindromity}, we discuss the palindromic property of those finite generating functions and introduce a more natural way to cut the DSPP's with profile $100$. In Section~\ref{sec:LinearPartition}, we discuss the connection between DSPP's with bounded diagonal length and linear partitions related to G\"ollnitz--Gordon and little G\"ollnitz identities. Finally, we give some conclusive remarks in Section~\ref{sec:Conclusions}.

\section{Partition Analysis}\label{sec:PA}

Partition analysis was first introduced by MacMahon in his Combinatorial Analysis \cite{MacMahon}. It relies on the Omega operator, which was defined as follows.
\begin{definition}
The Omega operator $\Omega_{\geq}$ is given by
$$\underset{\geq}{\Omega}\sum_{s_1=-\infty}^{\infty}\cdots\sum_{s_r=-\infty}^{\infty}A_{s_1,\ldots,s_r}\lambda_1^{s_1}\cdots\lambda_r^{s_r}:=\sum_{s_1=0}^{\infty}\cdots\sum_{s_r=0}^{\infty}A_{s_1,\ldots,s_r},$$
where the domain of the $A_{s_1\ldots,s_r}$ is the field of rational functions over $\mathbb{C}$
in several complex variables and the $\lambda_i$ are restricted to a neighborhood
of the circle $|\lambda_i|=1$.
\end{definition}

This operator gives an efficient way to compute generating functions for various partitions, especially with gap conditions. Andrews and Paule have written a series of papers on Partition Analysis. There are also recent works related to it. See \cite{AndrewsI,AndrewsPauleVIII,AndrewsPauleXIII,Li,LiUncu} for example.

We will mainly need this operator to produce initial values for the qFunctions package in order to guess a recurrence before proofs. Here we demonstrate this process with DSPPs of profile $010$. The following diagram gives a partition in $\text{DSPP}_{010}$ with diagonal length bounded by $n$.
\begin{center}
\begin{ytableau}
\none & c_1 & d_1\\
a_1 & b_1 & c_2 & d_2\\
\none & a_2 & b_2 & \none[\ddots] & \none[\ddots]\\
\none & \none & \none[\ddots] & \none[\ddots] & c_{n} & d_{n}\\
\none & \none & \none & a_{n} & b_{n}
\end{ytableau}
\end{center}
To apply the Omega operator, we need to convert the weakly decreasing condition on each row and column into inequalities and encode them as the exponents of some extra variables. For this diagram, we need the following inequalities for $n\geq i\geq1$.
$$a_i-b_i\geq0,\ b_{i}-c_{i+1}\geq0,\ c_{i}-d_{i}\geq0,\ d_i\geq0,$$
and
$$a_i\geq0,\ b_{i}-a_{i+1}\geq0,\ c_{i}-b_{i}\geq0,\ d_i-c_{i+1}\geq0.$$
Here we take $a_i=b_i=c_i=d_i=0$ for $i>n$. The inequalities in the first row indicate that the rows are weakly decreasing, while those in the second row indicate the columns are weakly decreasing. We shall encode them as exponents of $\lambda$'s and $\mu$'s, respectively.

Thus, define
$$F_{c}(n):=\sum_{\pi\in\text{DSPP}_{c}(n)}q^{|\pi|}$$
as the generating function for DSPPs with profile $c$ and diagonal length bounded by $n$. The crude form of $F_{010}(n)$ would be
\begin{align*}
F_{010}&(n)=\underset{\geq}{\Omega}\sum_{\substack{a_i,b_i,c_i,d_i\geq0\\\text{for}\ n\geq i\geq1}}q^{\sum_{i=1}^{n}a_i+b_i+c_i+d_i}\prod_{i=1}^{n}\lambda_{i,i+1}^{c_i-d_i}\lambda_{i,i+2}^{d_i}\lambda_{i+1,i}^{a_i-b_i}\lambda_{i+1,i+1}^{b_i-c_{i+1}}\mu_{i,i+1}^{c_i-b_i}\mu_{i,i+2}^{d_i-c_{i+2}}\mu_{i+1,i}^{a_i}\mu_{i+1,i+1}^{b_i-a_{i+1}}\\
=&\underset{\geq}{\Omega}\sum_{a_1\geq0}q^{a_1}\lambda_{2,1}^{a_1}\mu_{2,1}^{a_1}\sum_{c_1\geq0}q^{c_1}\lambda_{1,2}^{c_1}\mu_{1,2}^{c_1}\prod_{i=1}^{n}\left(\sum_{a_i\geq0}q^{a_i}\lambda_{i+1,i}^{a_i}\mu_{i+1,i}^{a_i}\mu_{i,i}^{-a_i}\sum_{c_i\geq0}q^{c_i}\lambda_{i,i+1}^{c_i}\mu_{i,i+1}^{c_i}\lambda_{i,i}^{-c_i}\mu_{i-1,i+1}^{-c_i}\right)\\
&\times\prod_{i=1}^{n}\left(\sum_{b_i\geq0}q^{b_i}\lambda_{i+1,i+1}^{b_i}\mu_{i+1,i+1}^{b_i}\lambda_{i+1,i}^{-b_i}\mu_{i,i+1}^{-b_i}\sum_{d_i\geq0}q^{d_i}\lambda_{i,i+2}^{d_i}\mu_{i,i+2}^{d_i}\lambda_{i,i+1}^{-d_i}\right)\\
=&\underset{\geq}{\Omega}\frac{1}{(1-q\lambda_{2,1}\mu_{2,1})(1-q\lambda_{1,2}\mu_{1,2})}\prod_{i=2}^{n}\left(1-\frac{q\lambda_{i+1,i}\mu_{i+1,i}}{\mu_{i,i}}\right)^{-1}\left(1-\frac{q\lambda_{i,i+1}\mu_{i,i+1}}{\lambda_{i,i}\mu_{i-1,i+1}}\right)^{-1}\\
&\times\prod_{i=1}^{n}\left(1-\frac{q\lambda_{i+1,i+1}\mu_{i+1,i+1}}{\lambda_{i+1,i}\mu_{i,i+1}}\right)^{-1}\left(1-\frac{q\lambda_{i,i+2}\mu_{i,i+2}}{\lambda_{i,i+1}}\right)^{-1},
\end{align*}
where we take $a_i=b_i=c_i=d_i=0$ for $i>n$ in the summation in first row.

This generating function is called the crude form because of those extra variables $\lambda$ and $\mu$. The next step would be applying certain elimination rules to cancel them. In \cite{AndrewsI}, Andrews listed several frequently used rules for the Omega operator. One can also derive rules by the patterns in the crude form, like the author did in \cite{Li}. For our purpose, we shall need the following.
\begin{lemma}The following elimination rules hold for the Omega operator.
\begin{equation}\label{eq:Elimination1}
\underset{\geq}{\Omega}\frac{1}{(1-A\lambda)}=\frac{1}{(1-A)},
\end{equation}
\begin{equation}\label{eq:Elimination2}
\underset{\geq}{\Omega}\frac{1}{(1-A\lambda)(1-\frac{B}{\lambda})}=\frac{1}{(1-A)(1-AB)},   
\end{equation}
\begin{equation}\label{eq:Elimination3}
\underset{\geq}{\Omega}\frac{1}{(1-A\lambda)(1-B\lambda)(1-\frac{C}{\lambda})}=\frac{1-ABC}{(1-A)(1-B)(1-AC)(1-BC)}.
\end{equation}

\begin{proof}
Here we shall prove the last one, for the first two are corollaries of it.
\begin{align*}
\underset{\geq}{\Omega}\frac{1}{(1-A\lambda)(1-B\lambda)(1-\frac{C}{\lambda})}=&\underset{\geq}{\Omega}\sum_{i\geq0}(A\lambda)^{i}\sum_{j\geq0}(B\lambda)^j\sum_{k\geq0}(C/\lambda)^k\\
=&\underset{\geq}{\Omega}\sum_{i,j,k\geq0}A^{i}B^j C^{k}\lambda^{i+j-k}\\
=&\sum_{i,j\geq0}A^{i}B^{j}\sum_{k=0}^{i+j}C^k\\
=&\frac{1}{1-C}\left(\sum_{i,j\geq0}A^iB^j-C\sum_{i\geq0}A^iC^{i}\sum_{j\geq0}B^jC^j\right)\\
=&\frac{(1-AC)(1-BC)-C(1-A)(1-B)}{(1-C)(1-A)(1-B)(1-AC)(1-BC)}\\
=&\frac{1-ABC}{(1-A)(1-B)(1-AC)(1-BC)}.
\end{align*}
This finishes the proof.
\end{proof}
\end{lemma}
Next, to demonstrate the elimination process, we present the computation for $F_{010}(1)$.
\begin{align*}
F_{010}(1)=&\underset{\geq}{\Omega}\frac{1}{(1-q\lambda_{2,1}\mu_{2,1})(1-q\lambda_{1,2}\mu_{1,2})}\left(1-\frac{q\lambda_{2,2}\mu_{2,2}}{\lambda_{2,1}\mu_{1,2}}\right)^{-1}\left(1-\frac{q\lambda_{1,3}\mu_{1,3}}{\lambda_{1,2}}\right)^{-1}\\
=&\underset{\geq}{\Omega}\frac{1}{(1-q\lambda_{2,1})(1-q\lambda_{1,2}\mu_{1,2})}\left(1-\frac{q}{\lambda_{2,1}\mu_{1,2}}\right)^{-1}\left(1-\frac{q}{\lambda_{1,2}}\right)^{-1}&\text{(by \eqref{eq:Elimination1})}\\
=&\underset{\geq}{\Omega}\frac{1}{(1-q)(1-q^2/\mu_{1,2})(1-q\mu_{1,2})(1-q^2\mu_{1,2})}&\text{(by \eqref{eq:Elimination2})}\\
=&\frac{1-q^5}{(1-q)^2(1-q^2)(1-q^3)(1-q^4)}&\text{(by \eqref{eq:Elimination3})}\\
=&\frac{1}{(q;q)_4}(1+q+q^2+q^3+q^4).
\end{align*}
This computation can also be done by the Mathematica package \cite{OmegaPackage}, and in practice, we usually need to compute a small number of these initial cases. For the other three profiles, we shall attain the initial values in the same way.

\section{Profile $010$}\label{sec:010}

The diagram of a partition in $\text{DSPP}_{010}$ with at most $4n$ parts can be of the following shapes.
\begin{center}
\begin{ytableau}
\none & c_1 & d_1\\
a_1 & b_1 & c_2 & d_2\\
\none & a_2 & b_2 & \none[\ddots] & \none[\ddots]\\
\none & \none & \none[\ddots] & \none[\ddots] & c_{n} & d_{n}\\
\none & \none & \none & a_{n} & b_{n}
\end{ytableau}
\hspace{2cm}
\begin{ytableau}
\none & c_1 & d_1\\
a_1 & b_1 & c_2 & d_2\\
\none & a_2 & b_2 & \none[\ddots] & \none[\ddots]\\
\none & \none & \none[\ddots] & \none[\ddots] & c_{n} & 0\\
\none & \none & \none & a_{n} & b_{n}\\
\none & \none & \none & \none & a_{n+1}
\end{ytableau}
\end{center}

Let $\text{DSPP}_{010}(n)$ be the set of double shifted plane partitions with diagonal length bounded by $n$, that is, $a_n+1=b_{n+1}=c_{n+1}=d_{n+1}=0$. And, let $\text{DSPP}'_{010}(n)$ be the set of double shifted plane partitions such that $a_{n+1}>0$ and $b_{n+1}=c_{n+1}=d_{n}=0$. We are mainly interested in $\text{DSPP}_{010}(n)$, but we need both of them to complete the recurrence argument. Let
$$F_{010}(n):=\sum_{\Lambda\in\text{DSPP}_{010}(n)}q^{|\Lambda|},\quad F'_{010}(n):=\sum_{\Lambda\in\text{DSPP}'_{010}(n)}q^{|\Lambda|}\quad\text{and}\quad F^{T}_{010}(n):=\sum_{\Lambda\in\text{DSPP}^{T}_{010}(n)}q^{|\Lambda|},$$
where $\text{DSPP}^{T}_{010}(n)=\text{DSPP}_{010}(n)\cup\text{DSPP}'_{010}(n)$ is the set all double shifted partitions with profile $010$ and have at most $4n$ parts. Note that $\text{DSPP}_{010}(n)\cap\text{DSPP}'_{010}(n)=\emptyset$, so $F^{T}_{010}(n)=F_{010}(n)+F'_{010}(n)$.

Recall that in Section \ref{sec:PA} we have seen the following crude generating function.
\begin{theorem}
\begin{equation}
\begin{split}
F_{010}(n)=&\underset{\geq}{\Omega}\frac{1}{(1-q\lambda_{2,1}\mu_{2,1})(1-q\lambda_{1,2}\mu_{1,2})}\prod_{i=2}^{n}\left(1-\frac{q\lambda_{i+1,i}\mu_{i+1,i}}{\mu_{i,i}}\right)^{-1}\left(1-\frac{q\lambda_{i,i+1}\mu_{i,i+1}}{\lambda_{i,i}\mu_{i-1,i+1}}\right)^{-1}\\
&\times\prod_{i=1}^{n}\left(1-\frac{q\lambda_{i+1,i+1}\mu_{i+1,i+1}}{\lambda_{i+1,i}\mu_{i,i+1}}\right)^{-1}\left(1-\frac{q\lambda_{i,i+2}\mu_{i,i+2}}{\lambda_{i,i+1}}\right)^{-1}.
\end{split}
\end{equation}    
\end{theorem}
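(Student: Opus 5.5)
This crude form is obtained exactly as in Section~\ref{sec:PA}: each weak inequality is encoded as the exponent of an auxiliary variable, so that $\underset{\geq}{\Omega}$ keeps only the monomials where all the inequalities hold. Concretely, I would write
$$F_{010}(n)=\sum_{\Lambda\in\text{DSPP}_{010}(n)}q^{|\Lambda|}$$
as a sum over all $a_i,b_i,c_i,d_i\geq0$ ($1\leq i\leq n$), with $a_i=b_i=c_i=d_i=0$ for $i>n$. The summand is $q^{\sum_i(a_i+b_i+c_i+d_i)}$ times one product of auxiliary factors. Each row inequality $a_i\ge b_i$, $b_i\ge c_{i+1}$, $c_i\ge d_i$, $d_i\ge 0$ gets a factor $\lambda_{\cdot,\cdot}^{\text{difference}}$. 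Each column inequality $a_i\ge0$, $b_i\ge a_{i+1}$, $c_i\ge b_i$, $d_i\ge c_{i+1}$ gets a factor $\mu_{\cdot,\cdot}^{\text{difference}}$.

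First I check that this encoding is faithful. The listed inequalities are exactly the conditions $\lambda^0\succeq\lambda^1\preceq\lambda^2\succeq\lambda^3$ for the diagonals $(a_i),(b_i),(c_i),(d_i)$ of the profile-$010$ shape. Reading the diagram, the rows and columns of the shifted tableau interleave the diagonals precisely as the relations $\succeq,\preceq$ require. Because $\underset{\geq}{\Omega}$ acts on each $\lambda$ and $\mu$ independently and discards every term with a negative exponent, the Omega image of this multisum is exactly the generating function of tuples satisfying all the inequalities, namely $F_{010}(n)$.

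Second, I regroup the exponents by variable rather than by inequality. For each variable, I collect all the auxiliary factors whose exponents involve it:
\begin{itemize}
\item $a_i$ appears positively in $\lambda_{i+1,i}$ and $\mu_{i+1,i}$, and negatively in $\mu_{i,i}$ (from $b_{i-1}\geq a_i$);
\item $b_i$ appears positively in $\lambda_{i+1,i+1}$ and $\mu_{i+1,i+1}$, and negatively in $\lambda_{i+1,i}$ and $\mu_{i,i+1}$;
\item $c_i$ appears positively in $\lambda_{i,i+1}$ and $\mu_{i,i+1}$, and negatively in $\lambda_{i,i}$ and $\mu_{i-1,i+1}$;
\item $d_i$ appears positively in $\lambda_{i,i+2}$ and $\mu_{i,i+2}$, and negatively in $\lambda_{i,i+1}$.
\end{itemize}
The sum then factors into independent geometric series, one for each variable. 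Summing each geometric series gives a factor $(1-q\cdot\text{monomial})^{-1}$.

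Third, I handle the boundary. For $i=1$ there is no $b_0$ or $d_0$ constraining $a_1$ or $c_1$, so the negative powers are absent. This gives the two leading factors $(1-q\lambda_{2,1}\mu_{2,1})^{-1}(1-q\lambda_{1,2}\mu_{1,2})^{-1}$, and the $a$- and $c$-products start at $i=2$. Terms referring to indices beyond $n$ vanish by the convention $a_{n+1}=\dots=0$.

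The only real obstacle is index bookkeeping: matching each inequality to the correct subscript of $\lambda$ or $\mu$ and checking the boundary cases. The convergence needed to sum the geometric series holds in the annulus near $|\lambda|=|\mu|=1$ with $|q|$ small, which is the standard setting of the Omega operator. Conceptually nothing more is needed.
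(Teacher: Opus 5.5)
Your proposal is correct and follows the paper's own derivation in Section~2. It uses the same row and column inequalities, encoded by the same $\lambda$ and $\mu$ variables, regrouped by $a_i,b_i,c_i,d_i$ into geometric series, with the missing $b_0,d_0$ at $i=1$ producing the two leading factors. Your assignment of $d_i\ge c_{i+1}$ to $\mu_{i,i+2}$ (hence the factor $\mu_{i-1,i+1}^{-c_i}$) is also slightly cleaner than the paper's intermediate display, which misprints that exponent as $d_i-c_{i+2}$.
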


By evaluating this for small $n$, we have the following initial values.
\begin{theorem}\label{thm:RecF010}
\begin{equation*}
F_{010}(1)=\frac{1}{(q;q)_4}(1+q+q^2+q^3+q^4),
\end{equation*}
 \begin{align*}
    F_{010}(2)=&\frac{1}{(q;q)_{8}}(1+q+q^2+q^3+2q^4+2q^5+2q^6+3q^7+3q^8+3q^9+2q^{10}\\   &+2q^{11}+2q^{12}+q^{13}+q^{14}+q^{15}+q^{16}).   
    \end{align*}
\end{theorem}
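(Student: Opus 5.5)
The statement just given is a pair of explicit evaluations, so the plan is to apply the crude Omega form above at $n=1$ and $n=2$ and then eliminate the auxiliary variables one at a time with the rules \eqref{eq:Elimination1}, \eqref{eq:Elimination2}, \eqref{eq:Elimination3}. The case $n=1$ has already been carried out in Section~\ref{sec:PA}. Applying \eqref{eq:Elimination1} to the $\mu$'s that occur only with non-negative exponents, then \eqref{eq:Elimination2} to $\lambda_{2,1}$ and $\lambda_{1,2}$, and finally \eqref{eq:Elimination3} to $\mu_{1,2}$ gives $(1-q^5)/((1-q)^2(1-q^2)(1-q^3)(1-q^4))$. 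Multiplying numerator and denominator by $(1-q)$ turns this into $(1+q+q^2+q^3+q^4)/(q;q)_4$, which is the first formula.

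For $n=2$, I would write out the crude form with the eight geometric factors coming from $a_1,a_2,b_1,b_2,c_1,c_2,d_1,d_2$. The boundary conventions $a_3=b_3=c_3=d_3=0$ (and similarly for indices beyond $n$) mean that all variables attached to out-of-range cells appear only to non-negative powers. Those variables are removed first by \eqref{eq:Elimination1}. I would then eliminate in an order that mirrors the poset of the tableau, working from the corner cells $d_2$, $b_2$ inward. At each stage a variable should appear with at most two positive factors and one negative factor, so that \eqref{eq:Elimination2} or \eqref{eq:Elimination3} applies directly.

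The step I expect to be the main obstacle is the elimination order. Once numerators such as $1-ABC$ appear, a later variable may sit in a numerator or in more factors than the three rules allow. If that happens, I would fall back on partial fractions in the offending variable, or on the Omega package \cite{OmegaPackage}, as the paper does in practice.

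As an independent check, which is also a rigorous alternative proof, I would compute the $n=2$ generating function directly. It is a sum over order-preserving maps from the eight-cell poset to $\mathbb{Z}_{\ge 0}$. By Stanley's theory of $P$-partitions it equals $\sum_{w} q^{\mathrm{maj}(w)}/(q;q)_8$, where $w$ runs over the linear extensions of the poset with a natural labeling. So the numerator is a polynomial with non-negative coefficients whose value at $q=1$ is the number of linear extensions. The stated numerator has coefficient sum $1+1+1+1+2+2+2+3+3+3+2+2+2+1+1+1+1=29$.

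The verification therefore comes down to two things: enumerating the 29 linear extensions of the shifted shape, and summing their major indices. The palindromic shape of the numerator, of degree $16$, comes from the involution that reverses a linear extension. This gives a sanity check on the result, and the same method confirms the $n=1$ numerator $1+q+\dots+q^4$, whose coefficient sum is 5.
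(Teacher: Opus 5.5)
Your main route is the paper's: evaluate the crude $\Omega$-form at $n=1$, by hand exactly as in Section~\ref{sec:PA}, and at $n=2$, where the paper in practice uses the Omega package. The obstacle you anticipate does occur for $n=2$. After eliminating the variables that sit in just one positive and one negative factor by \eqref{eq:Elimination2}, you meet variables with one positive and two negative factors. These need the extra rule $\underset{\geq}{\Omega}\,1/((1-A\lambda)(1-B/\lambda)(1-C/\lambda))=1/((1-A)(1-AB)(1-AC))$. After that, \eqref{eq:Elimination3} produces a numerator $1-ABC$ that still contains surviving $\lambda$'s and $\mu$'s. So your fallback is exactly what is needed.

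Your $P$-partition argument is a genuinely different and fully rigorous route, and it goes through. The nine cover relations make the eight cells a graded, in fact self-dual, poset, and your count of $29$ linear extensions is correct. Label $c_1,a_1,d_1,b_1,c_2,a_2,d_2,b_2$ by $1,\dots,8$. Then the major indices of the $29$ extensions are distributed exactly as $1,1,1,1,2,2,2,3,3,3,2,2,2,1,1,1,1$ in degrees $0,\dots,16$, which is the stated numerator.

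The two routes buy different things. Yours gives a certificate that can be checked by hand, and it explains the shape of the numerator: nonnegative coefficients and value $29$ at $q=1$. The Omega route is uniform across the four profiles, and it is what the paper feeds into its recurrence guessing.

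One side remark is wrong: palindromicity does not come from reversing linear extensions. A reversed extension is an extension of the dual poset. Composing with the anti-automorphism $a_1\leftrightarrow d_2$, $c_1\leftrightarrow b_2$, $b_1\leftrightarrow c_2$, $d_1\leftrightarrow a_2$ does give an involution, since the labeling above is compatible with it. But that involution sends $\mathrm{maj}$ to $8\,\mathrm{des}-\mathrm{maj}$, and it fixes the descent-free extension, so it cannot exchange degrees $0$ and $16$. The correct reason is Stanley's reciprocity for graded posets. All maximal chains here have four elements, so the numerator $W$ satisfies $W(q)=q^{d}W(q^{-1})$ with $d=\binom{8}{2}-\sum_x\mathrm{rk}(x)=28-12=16$.
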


\begin{theorem}\label{thm:F010Rec}
For any $n\geq1$,
\begin{equation}\label{eq:F010Rec}
F_{010}(n)=\frac{1+q^{4n-1}+q^{4n-2}+q^{4n-3}+q^{8n-4}}{(q^{4n-3};q)_{4}}F_{010}(n-1)+\frac{1+q^{4n-1}}{(q^{4n-3};q)_{4}}F'_{010}(n-1),
\end{equation}
\begin{equation}\label{eq:F'010Rec}
F'_{010}(n)=\frac{q^{4n}(1+q^{4n-3})}{(q^{4n-3};q)_{4}}F_{010}(n-1)+\frac{q^{4n}}{(q^{4n-3};q)_{4}}F'_{010}(n-1).
\end{equation}  
\end{theorem}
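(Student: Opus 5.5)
The plan is to treat both $F_{010}(n)$ and $F'_{010}(n)$ as generating functions of $P$-partitions on finite posets and to read the recurrences off a decomposition of linear extensions.

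\textbf{Setup.} Fix $n$ and let $S_n$ be the skew shifted shape of $4n$ cells $a_i,b_i,c_i,d_i$ ($1\le i\le n$) in the diagram above. A member of $\text{DSPP}_{010}(n)$ is exactly a filling of $S_n$ by non-negative integers, weakly decreasing along rows and columns. These are the reverse $P$-partitions of the poset $P_n$ cut out by the inequalities listed in Section~\ref{sec:PA}. Similarly, $\text{DSPP}'_{010}(n)$ is the set of fillings of the poset $P'_n$, obtained from $P_n$ by deleting $d_n$ and adjoining $a_{n+1}$ below $b_n$, with $a_{n+1}$ forced to be positive. By Stanley's theory of $P$-partitions (with a natural labelling),
\[
F_{010}(n)=\frac{W_n(q)}{(q;q)_{4n}},\qquad F'_{010}(n)=\frac{W'_n(q)}{(q;q)_{4n}},
\]
where $W_n$ is the major-index generating polynomial of linear extensions of $P_n$. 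For $W'_n$ the strictness $a_{n+1}>0$ is encoded by treating $a_{n+1}$ as a strict minimal element, which shifts its contribution by $q$ to the power of its position. Since $(q;q)_{4n}=(q^{4n-3};q)_4\,(q;q)_{4n-4}$, Theorem~\ref{thm:F010Rec} is equivalent to the polynomial recurrences
\[
W_n=(1+q^{4n-1}+q^{4n-2}+q^{4n-3}+q^{8n-4})W_{n-1}+(1+q^{4n-1})W'_{n-1},
\]
\[
W'_n=q^{4n}(1+q^{4n-3})W_{n-1}+q^{4n}W'_{n-1}.
\]
The initial values in Theorem~\ref{thm:RecF010} serve as checks: for instance $W_1=1+q+q^2+q^3+q^4$, and with $W_0=1$, $W'_0=0$ the first recurrence reproduces it.

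\textbf{Main step.} Rather than working with linear extensions abstractly, I would argue directly on fillings. I split $\Lambda\in\text{DSPP}^T_{010}(n)$ according to which of the last-layer cells $a_n,b_n,c_n,d_n$ (and $a_{n+1}$) are zero. I then subtract $1$ from every positive cell of the $n$-th layer together with all cells dominating them, i.e. remove an ``outer frame''. That frame weighs $q^{4n}$, $q^{4n-1}$, $q^{4n-2}$, $q^{4n-3}$ or $q^{8n-4}$ according to the case. The reduced object lies in $\text{DSPP}_{010}(n-1)$ or in $\text{DSPP}'_{010}(n-1)$, depending on whether the cell $a_n$ survives as positive. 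Iterating frame removals then accounts for the factor $1/(q^{4n-3};q)_4$. Concretely, the case where only $a_n$ is positive in the last layer yields the $F'$ terms, with its extra factor $q^{4n}$ in \eqref{eq:F'010Rec}. The cases where $d_n$, or $c_n,d_n$, or $b_n,c_n,d_n$ vanish produce the terms $q^{4n-1}$, $q^{4n-2}$ and $q^{4n-3}$ in \eqref{eq:F010Rec}. The term $q^{8n-4}$ should come from a double frame in which the descents at two of these positions combine.

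\textbf{Main obstacle.} The hard part is making this case analysis exhaustive and bijective, in particular explaining $q^{8n-4}$ and the factor $(1+q^{4n-3})$ in \eqref{eq:F'010Rec}. I expect these to reflect linear extensions in which two of the new elements create descents in positions $4n-1$ and $4n-3$, or similar, so the cleanest route is the linear-extension version. Classify linear extensions of $P_n$ by the relative order of the four new minimal-layer elements among themselves and with respect to $b_{n-1}$ and $d_{n-1}$. Then compute how the major index changes when those elements are deleted, giving an extension of $P_{n-1}$ or of $P'_{n-1}$.

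\textbf{Fallback.} If the combinatorial bookkeeping becomes unwieldy, I would verify the recurrence as a closed system. Evaluate the crude Omega forms for enough $n$, or equivalently compute $W_n$ and $W'_n$ by a transfer-matrix computation over the last layer. Since both sides satisfy a first-order $2\times2$ linear system with the same initial data $F_{010}(0)=1$, $F'_{010}(0)=0$, agreement of the transfer matrices settles the claim. Here the transfer matrix is obtained by summing the last-layer variables with the elimination rules \eqref{eq:Elimination1}--\eqref{eq:Elimination3}, conditioning only on $b_{n-1}$ and $c_n$.
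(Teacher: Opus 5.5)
Your reduction to polynomial recurrences for $W_n,W'_n$ is sound, but the proposal never proves them. The decomposition you describe in the main step is also the wrong one.

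Splitting by which last-layer cells vanish and peeling off frames only produces the denominator $1/(q^{4n-3};q)_4$, coming from the frame sizes $4n,4n-1,4n-2,4n-3$. The numerator monomials come from \emph{strict} inequalities forced by the relative order of $a_n,b_n,c_n,d_n$, not from zeros. Your case list is also inconsistent. The event $c_n=d_n=0$ forces $b_n=0$, since $b_n\le c_n$. And ``$b_n,c_n,d_n$ vanish'' is the same event as ``only $a_n$ is positive'', yet you assign it both the $q^{4n-3}$ term and the $F'$ terms.

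The $F'_{010}(n-1)$ terms in fact have nothing to do with zeros. We have $a_n\le b_{n-1}$, and every cell of the first $n-1$ layers other than $d_{n-1}$ dominates $b_{n-1}$. So the last layer fails to lie entirely below the rest exactly when $a_n>d_{n-1}$, and that case is what yields $F'_{010}(n-1)$.

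The fallback does not close the gap either. Agreement for finitely many $n$ proves nothing for all $n$. Asserting that the true $F_{010}(n),F'_{010}(n)$ satisfy a first-order $2\times 2$ system is the theorem itself. A transfer matrix over the last layer carries the interface values $b_{n-1},d_{n-1}$ as catalytic variables. It collapses to a scalar system only through the frame-shifting argument that is missing.

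The classification you name under ``main obstacle'' is the right one, and it is exactly the paper's proof; it just has to be carried out.

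If $a_n\le d_{n-1}$, the constraints $b_n\le a_n$, $b_n\le c_n$, $d_n\le c_n$ leave exactly five orders: $c_n\ge d_n\ge a_n\ge b_n$, $c_n\ge a_n>d_n\ge b_n$, $c_n\ge a_n\ge b_n>d_n$, $a_n>c_n\ge d_n\ge b_n$, and $a_n>c_n\ge b_n>d_n$. Each strict inequality costs one extra subtraction of $1$ from the up-set above it. These up-sets have sizes $4n-2$; $4n-1$; $4n-3$; and $4n-3$ together with $4n-1$. Peeling the remaining weak chain then gives $1/(q^{4n-3};q)_4$ times an element of $\text{DSPP}_{010}(n-1)$. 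This yields the numerator $1+q^{4n-3}+q^{4n-2}+q^{4n-1}+q^{8n-4}$.

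If $a_n>d_{n-1}$, then $d_{n-1}\ge c_n\ge b_n,d_n$. The two orders of $b_n,d_n$ give $1+q^{4n-1}$. Clearing $b_n,d_n,c_n,d_{n-1}$ leaves an element of $\text{DSPP}'_{010}(n-1)$ in which the entry $a_n-d_{n-1}$ is positive.

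For \eqref{eq:F'010Rec}, $a_{n+1}$ is the unique minimum, so $a_{n+1}>0$ contributes $q^{4n}$. Then $a_n\le d_{n-1}$ leaves the two orders of $a_n,c_n$ above $b_n\ge a_{n+1}$, giving $1+q^{4n-3}$. Meanwhile $a_n>d_{n-1}$ forces the single chain $d_{n-1}\ge c_n\ge b_n\ge a_{n+1}$, giving $1$.
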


\begin{proof}
We start with~\eqref{eq:F010Rec}. Given a double shifted plane partition $\pi\in\text{DSPP}_{010}(n)$, we have the following two cases.

\textbf{Case 1.} $a_n\leq d_{n-1}$. In this case, we need to get rid of $a_n,b_n,c_n$ and $d_n$ from the diagram, and we have the following subcases.
\begin{itemize}
    \item[(i)] $c_n\geq d_n\geq a_n\geq b_n$.
In this case, we do the following operations.
\begin{align*}
&\begin{pmatrix}
 & c_1 & d_1 & \\
a_1 & b_1 & \ddots & \ddots\\
 & \ddots & \ddots &c_{n-1} & d_{n-1}\\
& & a_{n-1} & b_{n-1} & c_n & d_n\\
& & & a_n & b_n 
\end{pmatrix}\\
\longrightarrow
&\begin{pmatrix}
 & c_1-b_n & d_1-b_n & \\
a_1-b_n & b_1-b_n & \ddots & \ddots\\
 & \ddots & \ddots &c_{n-1}-b_n & d_{n-1}-b_n\\
& & a_{n-1}-b_n & b_{n-1}-b_n & c_n-b_n & d_n-b_n\\
& & & a_n-b_n & 0 
\end{pmatrix}\\
\longrightarrow
&\begin{pmatrix}
 & c_1-a_n & d_1-a_n & \\
a_1-a_n & b_1-a_n & \ddots & \ddots\\
 & \ddots & \ddots &c_{n-1}-a_n & d_{n-1}-a_n\\
& & a_{n-1}-a_n & b_{n-1}-a_n & c_n-a_n & d_n-a_n\\
& & & 0 & 0
\end{pmatrix}\\
\longrightarrow
&\begin{pmatrix}
 & c_1-d_n & d_1-d_n & \\
a_1-d_n & b_1-d_n & \ddots & \ddots\\
 & \ddots & \ddots &c_{n-1}-d_n & d_{n-1}-d_n\\
& & a_{n-1}-d_n & b_{n-1}-d_n & c_n-d_n & 0\\
& & & 0 & 0
\end{pmatrix}\\
\longrightarrow
&
\begin{pmatrix}
 & c_1-c_n & d_1-c_n & \\
a_1-c_n & b_1-c_n & \ddots & \ddots\\
 & \ddots & \ddots &c_{n-1}-c_n & d_{n-1}-c_n\\
& & a_{n-1}-c_n & b_{n-1}-c_n & 0 & 0\\
& & & 0 & 0
\end{pmatrix}
\end{align*}
We have subtracted $b_n$, $a_n-b_n$, $d_n-a_n$, and $c_n-d_n$ from the remaining parts in each step, and we end up with a partition in $\text{DSPP}_{010}(n-1)$. The weight being subtracted are generated by $\frac{1}{(q^{4n-3};q)_{4}}$.
    \item[(ii)] $c_n\geq a_n>d_n\geq b_n$. In this case, we first subtract $1$ from every part except $b_n$ and $d_n$ and then swap $a_n-1$ and $d_n$.
\begin{align*}
\begin{pmatrix}
 & c_1 & d_1 & \\
a_1 & b_1 & \ddots & \ddots\\
 & \ddots & \ddots &c_{n-1} & d_{n-1}\\
& & a_{n-1} & b_{n-1} & c_n & d_n\\
& & & a_n & b_n 
\end{pmatrix}
\longrightarrow
\begin{pmatrix}
 & c_1-1& d_1-1& \\
a_1-1& b_1-1& \ddots & \ddots\\
 & \ddots & \ddots &c_{n-1}-1& d_{n-1}-1\\
& & a_{n-1}-1& b_{n-1}-1& c_n-1 & a_n-1\\
& & & d_n & b_n 
\end{pmatrix}
\end{align*}
By this, we have subtracted the total weight of $q^{4n-2}$, and we end up with the order $c_n-1\geq a_{n}-1\geq d_n\geq b_n$, which is the same as Case (i). So, by applying the same operations as in (i), we get a partition in $\text{DSPP}_{010}(n-1)$, and the weight we subtracted is generated by $\frac{q^{4n-2}}{(q^{4n-3};q)_4}$.

    \item[(iii)] $c_n\geq a_n\geq b_n>d_n$. In this case, we first subtract $1$ from each part except $d_{n}$, then rearrange $a_n-1$, $b_{n}-1$ and $d_n$ as follows.
\begin{align*}
\begin{pmatrix}
 & c_1 & d_1 & \\
a_1 & b_1 & \ddots & \ddots\\
 & \ddots & \ddots &c_{n-1} & d_{n-1}\\
& & a_{n-1} & b_{n-1} & c_n & d_n\\
& & & a_n & b_n 
\end{pmatrix}
\longrightarrow
\begin{pmatrix}
 & c_1-1& d_1-1& \\
a_1-1& b_1-1& \ddots & \ddots\\
 & \ddots & \ddots &c_{n-1}-1& d_{n-1}-1\\
& & a_{n-1}-1& b_{n-1}-1& c_n-1 & a_n-1\\
& & & b_n-1 & d_n 
\end{pmatrix}
\end{align*}
Note that we have subtracted the total weight of $q^{4n-1}$ from $\pi$, and in the new diagram we have $c_{n}-1\geq a_n-1\geq b_{n}-1\geq d_n$. So, the rest is the same as in Case (i). And the weight we subtracted is generated by $\frac{q^{4n-1}}{(q^{4n-3};q)_4}$.
    \item[(iv)] $a_n> c_n\geq d_n\geq b_n$. In this case, we first subtract $1$ from each part except $b_n$, $c_n$ and $d_{n}$, then rearrange $a_n-1$ $c_n$ and $d_n$ as follows.
\begin{align*}
\begin{pmatrix}
 & c_1 & d_1 & \\
a_1 & b_1 & \ddots & \ddots\\
 & \ddots & \ddots &c_{n-1} & d_{n-1}\\
& & a_{n-1} & b_{n-1} & c_n & d_n\\
& & & a_n & b_n 
\end{pmatrix}
\longrightarrow
\begin{pmatrix}
 & c_1-1& d_1-1& \\
a_1-1& b_1-1& \ddots & \ddots\\
 & \ddots & \ddots &c_{n-1}-1& d_{n-1}-1\\
& & a_{n-1}-1& b_{n-1}-1& a_n-1 & c_n\\
& & & d_n & b_n 
\end{pmatrix}
\end{align*}
Note that we have subtracted the total weight of $q^{4n-3}$, and in the new diagram we have $a_{n}-1\geq c_n\geq d_{n}\geq b_n$. So, the rest is the same as in Case (i). And the weight we subtracted is generated by $\frac{q^{4n-3}}{(q^{4n-3};q)_4}$.

    \item[(v)] $a_n> c_n\geq b_n>d_n$.
In this case, we first subtract $1$ from each part except $d_n$ and then swap $b_n-1$ and $d_n$.
\begin{align*}
\begin{pmatrix}
 & c_1 & d_1 & \\
a_1 & b_1 & \ddots & \ddots\\
 & \ddots & \ddots &c_{n-1} & d_{n-1}\\
& & a_{n-1} & b_{n-1} & c_n & d_n\\
& & & a_n & b_n 
\end{pmatrix}
\longrightarrow
\begin{pmatrix}
 & c_1-1& d_1-1& \\
a_1-1& b_1-1& \ddots & \ddots\\
 & \ddots & \ddots &c_{n-1}-1& d_{n-1}-1\\
& & a_{n-1}-1& b_{n-1}-1& c_n-1 & b_n-1\\
& & & a_n-1 & d_n 
\end{pmatrix}
\end{align*}
We have subtracted the total weight of $q^{4n-1}$, and in the resulting diagram, we have $a_n-1\geq c_n-1\geq b_{n}-1\geq d_n$, which is the same as in Case (iv), so we only need to apply the same operations. So, the weight we subtracted in this case is generated by $q^{4n-1}\times\frac{ q^{4n-3}}{(q^{4n-3};q)_4}=\frac{q^{8n-4}}{(q^{4n-3};q)_4}$.
\end{itemize}

Summing up all the subcases in \textbf{Case $1.$}, we get
$$\frac{1+q^{4n-1}+q^{4n-2}+q^{4n-3}+q^{8n-4}}{(q^{4n-3};q)_4}F_{010}(n-1),$$
which explains the first term on the right-hand side of~\eqref{eq:F010Rec}.

\textbf{Case 2.} $a_n>d_{n-1}$. In this case, we need to get rid of $d_{n-1}$, $b_n$ $c_n$ and $d_n$ from $\pi$ to get a partition in $\text{DSPP}'_{010}(n-1)$. Again, we consider the following subcases.
\begin{itemize}
    \item[(i)] $d_{n-1}\geq c_{n}\geq d_{n}\geq b_n$. In this case, we shall, step by step, subtract $b_n$, $d_n-b_n$, $c_n-d_n$ and $d_{n-1}-c_n$ from the remind parts of $\pi$.
\begin{align*}
&\begin{pmatrix}
 & c_1 & d_1 & \\
a_1 & b_1 & \ddots & \ddots\\
 & \ddots & \ddots &c_{n-1} & d_{n-1}\\
& & a_{n-1} & b_{n-1} & c_n & d_n\\
& & & a_n & b_n 
\end{pmatrix}\\
\longrightarrow
&\begin{pmatrix}
 & c_1-b_n & d_1-b_n & \\
a_1-b_n & b_1-b_n & \ddots & \ddots\\
 & \ddots & \ddots &c_{n-1}-b_n & d_{n-1}-b_n\\
& & a_{n-1}-b_n & b_{n-1}-b_n & c_n-b_n & d_n-b_n\\
& & & a_n-b_n & 0 
\end{pmatrix}\\
\longrightarrow
&\begin{pmatrix}
 & c_1-a_n & d_1-a_n & \\
a_1-a_n & b_1-a_n & \ddots & \ddots\\
 & \ddots & \ddots &c_{n-1}-a_n & d_{n-1}-a_n\\
& & a_{n-1}-a_n & b_{n-1}-a_n & c_n-a_n & 0\\
& & & a_n-d_n & 0
\end{pmatrix}\\
\longrightarrow
&\begin{pmatrix}
 & c_1-c_n & d_1-c_n & \\
a_1-c_n & b_1-c_n & \ddots & \ddots\\
 & \ddots & \ddots &c_{n-1}-c_n & d_{n-1}-c_n\\
& & a_{n-1}-c_n & b_{n-1}-c_n & 0 & 0\\
& & & a_n-c_n & 0
\end{pmatrix}\\
\longrightarrow
&\begin{pmatrix}
 & c_1-d_{n-1} & d_1-d_{n-1} & \\
a_1-d_{n-1} & b_1-d_{n-1} & \ddots & \ddots\\
 & \ddots & \ddots &c_{n-1}-d_{n-1} & 0 \\
& & a_{n-1}-d_{n-1} & b_{n-1}-d_{n-1} & 0 & 0\\
& & & a_n-d_{n-1} & 0
\end{pmatrix}
\end{align*}
Note that the weight been subtracted is generated by $\frac{1}{(q^{4n-3};q)_{4}}$, and the resulted partition is in $\text{DSPP}_{010}'(n-1)$ as desired.
    \item[(ii)] $d_{n-1}\geq c_{n}\geq b_{n}> d_n$. In this case, we first subtract $1$ from each part of $\pi$ except $d_n$, and then swap $b_{n-1}$ and $d_n$.
\begin{align*}
\begin{pmatrix}
 & c_1 & d_1 & \\
a_1 & b_1 & \ddots & \ddots\\
 & \ddots & \ddots &c_{n-1} & d_{n-1}\\
& & a_{n-1} & b_{n-1} & c_n & d_n\\
& & & a_n & b_n 
\end{pmatrix}
\longrightarrow
\begin{pmatrix}
 & c_1-1& d_1-1& \\
a_1-1& b_1-1& \ddots & \ddots\\
 & \ddots & \ddots &c_{n-1}-1& d_{n-1}-1\\
& & a_{n-1}-1& b_{n-1}-1& c_n-1 & b_n-1\\
& & & a_n-1 & d_n 
\end{pmatrix}
\end{align*}
The weight we subtracted is $q^{4n-3}$, and the resulting diagram is the same as in Case (i). So, in this case, the weight been subtracted is generated by $\frac{q^{4n-1}}{(q^{4n-3};q)_4}$.   
\end{itemize}

Summing up the two subcases of \textbf{Case $2.$}, we have
$$\frac{1+q^{4n-1}}{(q^{4n-3};q)_4}F'_{010}(n-1),$$
which is the second term on the right-hand side of~\eqref{eq:F010Rec}. Combining \textbf{Case $1.$} and \textbf{Case $2.$}, we finished the proof for~\eqref{eq:F010Rec}. By a similar argument, \eqref{eq:F'010Rec} can also be established, so we skip it.
\end{proof}

Let $P_{010}(n):=(q;q)_{4n}F_{010}(n)$ and $P'_{010}(n):=(q;q)_{4n}F'_{010}(n)$, then Theorem~\ref{thm:RecF010} implies the following.
\begin{theorem}\label{thm:P010Rec}
For any $n\geq1$,
\begin{equation} \label{P010_rec_coupled}
P_{010}(n)=(1+q^{4n-1}+q^{4n-2}+q^{4n-3}+q^{8n-4})P_{010}(n-1)+(1+q^{4n-1})P'_{010}(n-1),
\end{equation}
\begin{equation}\label{P010p_rec_coupled}
P'_{010}(n)=q^{4n}(1+q^{4n-3})P_{010}(n-1)+q^{4n}P'_{010}(n-1).
\end{equation}      
\end{theorem}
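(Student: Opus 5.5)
Theorem~\ref{thm:P010Rec} is a direct renormalization of Theorem~\ref{thm:F010Rec}, so the plan is simply to multiply the two recurrences \eqref{eq:F010Rec} and \eqref{eq:F'010Rec} by $(q;q)_{4n}$ and express everything through $P_{010}(n-1)=(q;q)_{4n-4}F_{010}(n-1)$ and $P'_{010}(n-1)=(q;q)_{4n-4}F'_{010}(n-1)$.

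The only identity needed is the factorization
\[
(q;q)_{4n}=(q;q)_{4n-4}\,(1-q^{4n-3})(1-q^{4n-2})(1-q^{4n-1})(1-q^{4n})=(q;q)_{4n-4}\,(q^{4n-3};q)_4 ,
\]
which holds for every $n\geq1$ with the convention $(q;q)_0=1$. Multiplying \eqref{eq:F010Rec} by $(q;q)_{4n}$ cancels the denominator $(q^{4n-3};q)_4$ in both terms on the right-hand side. What remains is $(1+q^{4n-1}+q^{4n-2}+q^{4n-3}+q^{8n-4})(q;q)_{4n-4}F_{010}(n-1)+(1+q^{4n-1})(q;q)_{4n-4}F'_{010}(n-1)$, and this is exactly \eqref{P010_rec_coupled}. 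The same computation applied to \eqref{eq:F'010Rec} gives \eqref{P010p_rec_coupled}, with the prefactors $q^{4n}(1+q^{4n-3})$ and $q^{4n}$.

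There is no real obstacle in this step. The one point to check is the base case $n=1$: here $P_{010}(0)=F_{010}(0)=1$, since only the empty DSPP has diagonal length $0$, and $P'_{010}(0)=F'_{010}(0)$ must be read with the same conventions used in Theorem~\ref{thm:F010Rec}. With those conventions the normalized recurrences hold exactly as stated, and the substantive content is entirely carried by the combinatorial case analysis already proved in Theorem~\ref{thm:F010Rec}. As a sanity check, I would verify that $n=1$ reproduces $P_{010}(1)=1+q+q^2+q^3+q^4$, which agrees with the initial value $F_{010}(1)$ computed via the Omega operator. That agreement forces $P'_{010}(0)=0$, consistent with $\text{DSPP}'_{010}(0)$ being empty.
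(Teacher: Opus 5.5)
Your proposal is correct and is the paper's own argument: the paper obtains Theorem~\ref{thm:P010Rec} from Theorem~\ref{thm:F010Rec} by multiplying through by $(q;q)_{4n}=(q;q)_{4n-4}(q^{4n-3};q)_4$. Your $n=1$ remark is also right, with one caveat: $P'_{010}(0)=0$ is a convention rather than a consequence of the literal definition, which would admit a DSPP whose only nonzero entry is $a_1>0$; the convention is justified because Case~2 ($a_1>d_0$) is vacuous when no $d_0$ exists, and it agrees with the value given by Theorem~\ref{thm:P010p_fermionic_rep}.
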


The coupled recurrence system in Theorem~\ref{thm:P010Rec} can be uncoupled by substitution and some algebra. We do it openly for good measure. Solving \eqref{P010_rec_coupled} for $P'_{010}(n-1)$ and plugging it in \eqref{P010p_rec_coupled} yields
\begin{equation}
    \label{P010p_rec_intermediate}
    P'_{010}(n) = q^{4n}(1+q^{4n-3})P_{010}(n-1) + \frac{q^{4n}}{1-q^{4n-1}}(P_{010}(n)-(1+q^{4n-1}+q^{4n-2}+q^{4n-3}+q^{8n-4})P_{010}(n-1))
\end{equation}
Substituting \eqref{P010p_rec_intermediate} with $n\mapsto n-1$ in \eqref{P010_rec_coupled} yields an equivalent recurrence to the following:\begin{align}
     \nonumber q^{6 + 8 n} (1 + q^{7 + 4 n}) P_{010}(
   n) - (1 + q^{5 + 4 n}) (1 + q^{3 + 4 n} + q^{4 + 4 n} + q^{
    6 + 4 n} &+ q^{7 + 4 n} + q^{10 + 8 n}) P_{010}(n+1)\\ \label{P010_rec_uncoupled}&+ (1 + q^{3 + 4 n}) P_{010}( n+2) = 0
\end{align}

Similarly, the uncoupled recurrence for $P'_{010}(n)$ is 
\begin{align}
     \nonumber q^{10 + 8 n} (1 + q^{5 + 4 n}) P'_{010}(
   n) - q^{4n}(1 + q^{3 + 4 n}) (1 + q^{1 + 4 n} + q^{2 + 4 n} + q^{
    4 + 4 n} &+ q^{5 + 4 n} + q^{6 + 8 n}) P'_{010}(n+1)\\ \label{P'010_rec_uncoupled}&+ (1 + q^{1 + 4 n}) P'_{010}( n+2) = 0
\end{align}

For more complicated coupled systems, this uncoupling can be done through other means, such as Gr\"obner bases. For other applications of such uncoupling, we invite readers to see \cite{HolonomicFunctions, CDU}.

Looking at the initial terms, we can also guess a closed-form solution for this sequence using \cite{qFuncs}. We represent this as the next theorem.

\begin{theorem}\label{thm:P010_fermionic_rep}
    For $n\geq 0$, \begin{equation}
        \label{P010_fermionic_rep} P_{010}(n) = \sum_{j,k\geq 0} q^{j^2 + k^2} {2 n - k\brack j}_{q^2} {j\brack k}_{q^2}.
    \end{equation}
\end{theorem}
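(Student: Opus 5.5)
The plan is the standard one for guessed closed forms: show that the right-hand side of \eqref{P010_fermionic_rep}, call it $S(n)$, satisfies the uncoupled recurrence \eqref{P010_rec_uncoupled}, and that it agrees with $P_{010}(n)$ at enough initial values. Since \eqref{P010_rec_uncoupled} has order two and leading coefficient $1+q^{3+4n}\neq 0$, agreement for $n=0,1$ then forces $S(n)=P_{010}(n)$ for all $n\geq 0$.

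\textbf{Initial values.} We have $P_{010}(0)=1$. From Theorem~\ref{thm:RecF010}, $P_{010}(1)=1+q+q^2+q^3+q^4$. On the other side, $S(0)=1$, because $j=k=0$ is forced. For $n=1$ the surviving terms of $S(1)$ are
\begin{itemize}
\item $(j,k)=(0,0)$, contributing $1$;
\item $(j,k)=(1,0)$, contributing $q\,{2\brack 1}_{q^2}=q+q^3$;
\item $(j,k)=(2,0)$, contributing $q^4$;
\item $(j,k)=(1,1)$, contributing $q^2$.
\end{itemize}
These sum to $1+q+q^2+q^3+q^4$, as required. I will also check $n=2$ against Theorem~\ref{thm:RecF010} as a sanity test.

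\textbf{Recurrence.} It is cleaner to work with the coupled system \eqref{P010_rec_coupled}--\eqref{P010p_rec_coupled}, which is first order. I would define $S'(n)$ through \eqref{P010_rec_coupled}, that is
\[
S'(n-1):=\frac{S(n)-(1+q^{4n-1}+q^{4n-2}+q^{4n-3}+q^{8n-4})S(n-1)}{1+q^{4n-1}},
\]
or better, guess a companion double sum for $P'_{010}(n)$ directly using qFunctions. A natural candidate is
\[
q^{4n}\sum_{j,k\geq 0} q^{j^2+k^2+2j}\,{2n-k-1\brack j}_{q^2}{j\brack k}_{q^2},
\]
in the spirit of \eqref{eq_double_baileyd_P000lim}. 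I would then verify both first-order relations.

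\textbf{Verifying the sums.} The tool is the Pascal-type $q^2$-binomial recurrences
\[
{N\brack j}_{q^2}={N-1\brack j}_{q^2}+q^{2(N-j)}{N-1\brack j-1}_{q^2}={N-1\brack j-1}_{q^2}+q^{2j}{N-1\brack j}_{q^2}.
\]
I would apply these twice to ${2n-k\brack j}_{q^2}$ to step from $2n-2$ to $2n$, then shift $j$ and $k$ so that every term is of the form $q^{j^2+k^2+\text{linear}}{2n-2-k\brack j}{j\brack k}$ or of the companion shape. Matching coefficients then reduces each relation to an identity among finitely many shifted sums.

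Alternatively, I would let a creative-telescoping / holonomic package (qZeil or HolonomicFunctions) certify that the summand of $S(n)$ satisfies \eqref{P010_rec_uncoupled} up to a telescoping term in $j$ and $k$. I would then check that the boundary terms vanish, which holds because the $q$-binomials are zero outside their support.

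\textbf{Main obstacle.} The hard step is the double-sum telescoping. The inner summand ${2n-k\brack j}{j\brack k}$ couples $k$ into the top entry, so a single Pascal step does not close. I expect to need the certificate from creative telescoping rather than a hand manipulation, and then to check the certificate's rational functions for poles at boundary values of $j$ and $k$.
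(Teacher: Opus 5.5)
Your main route is the paper's proof: show that the double sum satisfies the uncoupled second-order recurrence for $P_{010}(n)$ by creative telescoping (the paper uses qMultiSum; qZeil alone handles only single sums, so use qMultiSum or HolonomicFunctions), then match $n=0,1$, and your initial-value check is correct. On your optional coupled-system detour, the guessed companion sum has a spurious factor $q^{2j}$ and already fails at $n=1$: it gives $q^4(1+q^3)$, whereas \eqref{P010p_rec_coupled} gives $P'_{010}(1)=q^4(1+q)$; the correct companion, from Theorem~\ref{thm:P010p_fermionic_rep}, is $q^{4n}\sum_{j,k\geq 0}q^{j^2+k^2}{2n-k-1\brack j}_{q^2}{j\brack k}_{q^2}$.
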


We prove this easily by \cite{qMultiSum}, by showing that the right-hand side sum satisfies \eqref{P010_rec_uncoupled} and has the same initial conditions as $P_{010}(n)$.

Berkovich--McCoy--Orrick \cite{Berkovich_McCoy_Orrick} showed that the right-hand side double sum is the generating function for the number of partitions into parts $\leq 4n-1$ that satisfy the gap conditions of the G\"ollnitz--Gordon partition theorem. An alternative double sum representation with $q$-binomial bases $q$ and $q^4$ rather than uniformly $q^2$ as in \eqref{P010_rec_uncoupled} is given in \cite{doublesums}. 

Now that a clear connection of $P_{010}(n)$ with the G\"ollnitz-Gordon identities is established, we can directly prove \eqref{F010_product} by first observing \[F_{010}(n) = \frac{1}{(q;q)_{4n}} \sum_{j,k\geq 0} q^{j^2 + k^2} {2 n - k\brack j}_{q^2} {j\brack k}_{q^2}\] and then replacing the sum generating function of the G\"ollnitz--Gordon partitions with the relevant product generating function in the limit $n\rightarrow\infty$. We note that another proof of \eqref{F010_product} was given in \cite[Section 4.3]{BridgesUncu} through the functional equations and not through first establishing a bounded generating function. 

Similarly, we can also prove the following theorem about $P'_{010}(n)$.
\begin{theorem}\label{thm:P010p_fermionic_rep}
    For $n\geq 0$, \begin{align}
        \label{P010p_fermionic_rep} P'_{010}(n) &= q^{4n} \sum_{j,k\geq 0} q^{j^2 + k^2} {2 n - k-1\brack j}_{q^2} {j\brack k}_{q^2},
        \intertext{and} \label{F010p_fermionic_rep}F'_{010}(n) &= P'_{010}(n)/ (q;q)_{4n}.
    \end{align}
\end{theorem}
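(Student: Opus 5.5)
The second identity \eqref{F010p_fermionic_rep} is just the definition $P'_{010}(n)=(q;q)_{4n}F'_{010}(n)$, so the whole task is \eqref{P010p_fermionic_rep}. I would not verify a separate uncoupled recurrence for the right-hand side. Instead I would use the coupled system of Theorem~\ref{thm:P010Rec} together with Theorem~\ref{thm:P010_fermionic_rep}, which is already established. Write
\[
A(n):=\sum_{j,k\geq 0} q^{j^2+k^2}{2n-k\brack j}_{q^2}{j\brack k}_{q^2},\qquad B(n):=q^{4n}\sum_{j,k\geq 0} q^{j^2+k^2}{2n-k-1\brack j}_{q^2}{j\brack k}_{q^2}.
\]
By Theorem~\ref{thm:P010_fermionic_rep}, $P_{010}(n)=A(n)$. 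Equation \eqref{P010p_rec_coupled} expresses $P'_{010}(n)$ in terms of $P_{010}(n-1)$ and $P'_{010}(n-1)$. Equation \eqref{P010_rec_coupled} can be solved for $P'_{010}(n-1)$:
\[
(1+q^{4n-1})P'_{010}(n-1)=A(n)-(1+q^{4n-1}+q^{4n-2}+q^{4n-3}+q^{8n-4})A(n-1).
\]
So it suffices to prove the purely $q$-series identity
\[
(1+q^{4n-1})B(n-1)=A(n)-(1+q^{4n-1}+q^{4n-2}+q^{4n-3}+q^{8n-4})A(n-1)\qquad (n\ge 1).
\]
This relation pins down $P'_{010}(n-1)$ uniquely, since $1+q^{4n-1}\neq 0$, and therefore gives $P'_{010}(n-1)=B(n-1)$ for every $n\ge1$. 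That is exactly \eqref{P010p_fermionic_rep}. As a sanity check, I would compare with the initial data: $P'_{010}(0)=0$ because $F'_{010}(0)$ counts objects with $a_1>0$ and $d_0=0$, while $B(0)$ has ${-k-1\brack j}=0$. For this check I would need to be careful with the convention at $n=0$, and handle that case directly if necessary.

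To prove the displayed identity, I would apply the $q$-Pascal relations ${N\brack j}_{q^2}={N-1\brack j}_{q^2}+q^{2(N-j)}{N-1\brack j-1}_{q^2}$ (and its dual form) twice. The goal is to lower the top entry of the outer $q$-binomial in $A(n)$ from $2n-k$ down to $2n-k-2$ and $2n-k-3$, so that the result can be matched against $A(n-1)$ and $B(n-1)$. The inner factor ${j\brack k}$ then has to be re-expanded so that the shifts $j\mapsto j-1$ are absorbed. This bookkeeping is the main obstacle: the terms $q^{4n-2},q^{4n-3}$ must arise from factors $q^{2(2n-k-j)}$ combined with the shifts in $j^2$.

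If the manual manipulation becomes unwieldy, I would fall back on the approach the paper already uses. I would show with \cite{qMultiSum} (creative telescoping) that $B(n)$ satisfies the uncoupled recurrence \eqref{P'010_rec_uncoupled}. Then I would check the two initial values $B(0)=P'_{010}(0)$ and $B(1)=P'_{010}(1)$, the latter computed from \eqref{P010p_rec_coupled}. Since the recurrence has order two and a nonvanishing leading coefficient $1+q^{1+4n}$, this determines the sequence and completes the proof.
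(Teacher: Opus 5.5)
Your proposal is correct, and its main route differs from the paper's. The paper proves this theorem the same way as Theorem~\ref{thm:P010_fermionic_rep}: it certifies with \cite{qMultiSum} that the double sum satisfies the uncoupled second-order recurrence \eqref{P'010_rec_uncoupled}, and then matches initial values. You instead feed the already-proved $P_{010}(n)=A(n)$ into \eqref{P010_rec_coupled}, which reduces everything to one relation between double sums. The step you left as bookkeeping closes cleanly if you lower the top index one unit at a time rather than by $2$ or $3$. Put $S(L)=\sum_{j,k}q^{j^2+k^2}{L-k\brack j}_{q^2}{j\brack k}_{q^2}$, so $A(n)=S(2n)$ and $B(n)=q^{4n}S(2n-1)$. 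One outer Pascal step with $N=L-k$, followed by $j\mapsto j+1$, gives $S(L)=S(L-1)+q^{2L-1}\sum_{j,k}q^{j^2+k^2-2k}{L-1-k\brack j}_{q^2}{j+1\brack k}_{q^2}$. Then ${j+1\brack k}_{q^2}=q^{2k}{j\brack k}_{q^2}+{j\brack k-1}_{q^2}$ and $k\mapsto k+1$ yield
\[
S(L)=(1+q^{2L-1})S(L-1)+q^{2L-2}S(L-2)\qquad(L\ge1,\ S(0)=1,\ S(-1)=0).
\]
Combinatorially, this is the largest-part recurrence for G\"ollnitz--Gordon partitions with parts $\le 2L-1$. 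Apply it at $L=2n$ and $L=2n-1$ and eliminate $S(2n-1)$. Since $(1+q^{4n-1})(1+q^{4n-3})+q^{4n-2}=1+q^{4n-3}+q^{4n-2}+q^{4n-1}+q^{8n-4}$, this gives exactly your identity. Moreover, the instance $L=2n-1$ is literally \eqref{P010p_rec_coupled} for the pair $(A,B)$. So the argument proves Theorem~\ref{thm:P010_fermionic_rep} and the present theorem together from $A(0)=1$, $B(0)=0$, with no uncoupling and no creative telescoping. The paper's route is uniform and automatic, but it rests on a computer-algebra certificate and on the uncoupled recurrence being printed correctly.

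Two cautions. First, your fallback should not cite \eqref{P'010_rec_uncoupled} verbatim, because it contains a misprint. Eliminating $P_{010}$ from the coupled system shows that the prefactor $q^{4n}$ in the middle coefficient should be $q^{4}$. Concretely, at $n=0$, with $P'_{010}(0)=0$ and $P'_{010}(1)=q^4(1+q)$, the printed version would force $P'_{010}(2)=q^4(1+q^3)(1+q+q^2+q^4+q^5+q^6)$. The true value, which equals $B(2)$, is $q^8$ times this. So certifying $B(n)$ against the printed recurrence would fail. Second, $P'_{010}(0)=0$ does not follow from the literal definition: the objects with $a_1>0$ and all other entries zero would contribute $q/(1-q)$. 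It is the convention under which \eqref{P010_rec_coupled} holds at $n=1$, where Case~2 is vacuous. Your reduction uses exactly this convention, so state it as a convention rather than derive it from the definition.
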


In \cite{Berkovich_McCoy_Orrick}, Berkovich--McCoy--Orrick also found a polynomial relation that yields the first G\"ollnitz--Gordon identity directly:

\begin{theorem}[Berkovich--McCoy--Orrick] For every non-negative integer $L$,
\begin{equation}
    \label{BMO_GG1_bosonic}
    \sum_{j,k\geq 0} q^{j^2 + k^2} {L - k\brack j}_{q^2} {j\brack k}_{q^2} = \sum_{j=-\infty}^\infty (-1)^j q^{4j^2+j} \left(T_0(L,4j,q^2) +  T_0(L,4j+1,q^2) \right),
\end{equation}
where the trinomial coefficient is defined as \[T_0(n,j,q) := \sum_{r=0}^n (-1)^r {n \brack r}_{q^2}{2n-2r\brack n-j-r}_{q}.\]
\end{theorem}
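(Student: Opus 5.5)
The identity \eqref{BMO_GG1_bosonic} is a polynomial identity in $q$ indexed by $L$, so I would prove it by showing that both sides satisfy the same recurrence in $L$ and agree at enough initial values. Write $S(L)$ for the left-hand side double sum and $B(L)$ for the right-hand side. The natural first step is to find a recurrence for $S(L)$. The creative-telescoping machinery of \cite{qMultiSum} already gave the uncoupled recurrence \eqref{P010_rec_uncoupled} for $S(2n)=P_{010}(n)$. For general $L$ I would redo this step and derive a single recurrence for $S(L)$ directly in $L$. I expect it to be of order two or three with polynomial coefficients in $q^L$, obtained from summand-level Pascal relations for ${L-k\brack j}_{q^2}$.

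Combinatorially, the recurrence should reflect the gap conditions. \cite{Berkovich_McCoy_Orrick} identify $S(L)$ with partitions into parts $\le 2L-1$ obeying the Göllnitz--Gordon difference-$2$, no-consecutive-evens rule. Removing the largest allowed part therefore gives a recurrence of the shape $S(L)=S(L-1)+q^{2L-1}S(L-1)'+\dots$. Concretely, the coupled pair (largest part $2L-1$ odd, or $2L$ even) uncouples to a third-order relation. I would pin this recurrence down explicitly, which also makes it checkable by hand.

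Next I would verify that $B(L)$ satisfies the same recurrence. The plan is to use the standard recurrences for the $q$-trinomial coefficient $T_0(L,j,q)$, namely the Andrews--Baxter relations. One of these is
\[T_0(L,j,q)=T_0(L-1,j-1,q)+q^{L+j}T_0(L-1,j,q)+q^{2L-2}\cdots,\]
together with its companions relating $T_0(L,j)$, $T_0(L-1,j\pm1)$ and $T_0(L-1,j)$. Substituting these into $B(L)$ gives a sum over $j$ of shifted trinomials. The shifts $j\mapsto j\pm1$ must then be absorbed by reindexing the bilateral sum, so that the weight $(-1)^jq^{4j^2+j}$ reproduces itself. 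Here the pairing of $4j$ with $4j+1$ is essential: shifting $4j+1\to 4j$ and $4j\to 4j-1\equiv 4(j-1)+3$ must close up under the symmetry $T_0(L,-j)=T_0(L,j)$. Alternatively, I would mechanize this step as well. Each $T_0$ is itself a finite sum, so $B(L)$ is a definite triple sum with one bilateral variable, and a recurrence for it can be produced by \cite{qMultiSum} and compared with that of $S(L)$.

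Finally I would check the initial values $L=0,1,2$ directly; $S(0)=1$, for instance, matches $B(0)$ since only $j=0$ contributes. The main obstacle is the bosonic side. The bilateral sum with mod-$4$ residue classes does not fit creative telescoping cleanly, and matching the two recurrences may require first splitting $B(L)$ into the two residue pieces, each satisfying an inhomogeneous relation coupled to the other. If that fails, the fallback is to prove the identity via a Bailey-pair argument. This would mean recognizing the $q^{j^2}{L-k\brack j}$ structure as a Bailey lattice insertion over the trinomial Bailey pair of Andrews--Berkovich, which is how such $T_0$ identities are usually produced.
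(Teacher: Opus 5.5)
The paper gives no proof of this statement; it imports it from Berkovich--McCoy--Orrick, so your plan has to stand on its own. The strategy is sound in principle: the same $L$-recurrence for both sides plus initial values, which is how the paper treats its own fermionic formulas. The bosonic side $B(L)$ is a proper $q$-hypergeometric double sum in $(j,r)$ with natural boundaries, so creative telescoping applies, and the residues mod $4$ are no obstruction. The real problem is that your promised initial-value check would fail, and the only case you actually ran, $L=0$, cannot detect it. Read literally, with the displayed definition evaluated at $q^2$, one gets $T_0(1,0,q^2)={2\brack 1}_{q^2}-1=q^2$ and $T_0(1,1,q^2)=1$. Hence $B(1)=1+q^2$, while $S(1)=1+q$. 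Likewise $B(2)=1+q^2+q^4+q^6+q^8\neq 1+q+q^2+q^3+q^4=S(2)$. So the identity as written is false, and no argument can prove it.

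It becomes correct with the Andrews--Baxter trinomial at base $q$, i.e.\ with $T_0(L,A,q)=\sum_r(-1)^r{L\brack r}_{q^2}{2L-2r\brack L-A-r}_q$ in place of $T_0(L,A,q^2)$. Then $B(1)=q+1$ and $B(2)=(1+q^2+q^4)+(q+q^3)$. At $L=3$ the term $j=-1$ contributes $-q^3T_0(3,-3,q)=-q^3$, which is needed to match $S(3)$. At $L=4$ both $j=\pm1$ contribute, and one recovers the numerator $P_{010}(2)$ of $F_{010}(2)$. This reading is also the one compatible with the limit $L\to\infty$. There $T_0(L,4j,q)+T_0(L,4j+1,q)\to(-q;q^2)_\infty/(q^2;q^2)_\infty$ and $\sum_j(-1)^jq^{4j^2+j}=(q^3,q^5,q^8;q^8)_\infty$, and the product is $1/(q,q^4,q^7;q^8)_\infty$.

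Second, you do not actually supply the hand verification of a recurrence for $B(L)$, and the ingredients you quote would not work as stated. Your displayed trinomial recurrence is false. At $L=2$, $A=0$ one has $T_0(2,0,q)=1+q^2+q^4$, while $T_0(1,-1,q)+q^{2}T_0(1,0,q)=1+q^3$. The remainder $q^2-q^3+q^4$ is not a monomial multiple of any single trinomial with $L\le 1$. The genuine Andrews--Baxter relations couple $T_0$ to the companion $T_1(L,A,q)=\sum_r(-q)^r{L\brack r}_{q^2}{2L-2r\brack L-A-r}_q$. For instance, $T_0(L,A,q)=T_0(L-1,A-1,q)+q^{L+A}T_1(L-1,A,q)+q^{2L+2A}T_0(L-1,A+1,q)$, so a hand proof must carry $T_1$-sums along as well.

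Moreover, the reindexing does not close up by symmetry alone. The map $A\mapsto -A$ sends the residue class $3$ to $1$ but fixes the class $2 \bmod 4$. So the terms with $A\equiv 2$ produced by the shifts must cancel because of the specific weight $(-1)^jq^{4j^2+j}$, and that cancellation is exactly what your sketch leaves open. The realistic completion is your mechanized fallback, with the corrected normalization: compute recurrences for $S(L)$ and for the $(j,r)$ double sum $B(L)$ by creative telescoping. Then pass to a common left multiple if they differ, and check as many initial values as its order requires.
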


The identity \eqref{BMO_GG1_bosonic} lies on an infinite hierarchy also noted in \cite{Berkovich_McCoy_Orrick}. In \cite{AndrewsBerkovich}, Andrews--Berkovich applied their trinomial analogue of Bailey's lemma to the infinite hierarchy that includes \eqref{BMO_GG1_bosonic} and found related new summation formulas. We can do the same using a more recent summation formula on $T_0$ trinomials due to Berkovich and the second author \cite[Theorem~3.5]{BU_elementary}. 

\begin{theorem}\label{thm_BaileyP010} For non-negative integer $L$, we have
\begin{align}\nonumber
    \sum_{i,j,k\geq 0} q^{i^2+j^2 + k^2} {L \brack i}_{q^2} &{ i - k\brack j}_{q^2} {j\brack k}_{q^2} \\\label{eq_BaileydP010}&= \sum_{j=-\infty}^\infty \left((-1)^j q^{20j^2+j} {2L\brack L-4j}_{q^2} 
    +(-1)^j q^{20j^2+9j+1}{2L\brack L-4j-1}_{q^2} 
    \right).
\end{align}
\end{theorem}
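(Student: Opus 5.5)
The plan is to apply the summation formula of Berkovich and the second author \cite[Theorem~3.5]{BU_elementary} to the bosonic side of the Berkovich--McCoy--Orrick identity \eqref{BMO_GG1_bosonic}. That formula sums $T_0$ trinomials against a $q$-binomial weight and returns an ordinary $q$-binomial. Concretely, I expect it to take the form
\[
\sum_{i=0}^{L} q^{i^2}{L\brack i}_{q^2}\, T_0(i,a,q^2) = q^{c(a)}{2L\brack L-a}_{q^2},
\]
with an explicit quadratic exponent $c(a)$ in $a$. This is the trinomial analogue of the Bailey-pair step $\sum_i q^{i^2}{L\brack i}(\cdot)$ that turns a polynomial identity of "level $i$" into one of "level $L$".

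\textbf{Step 1 (left side).} Replace $L$ by $i$ in \eqref{BMO_GG1_bosonic}, multiply both sides by $q^{i^2}{L\brack i}_{q^2}$, and sum over $0\le i\le L$. The left side becomes exactly the triple sum in \eqref{eq_BaileydP010}, since $\sum_i q^{i^2}{L\brack i}_{q^2}\sum_{j,k}q^{j^2+k^2}{i-k\brack j}_{q^2}{j\brack k}_{q^2}$ is that sum after interchanging the finite sums.

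\textbf{Step 2 (right side).} Interchange the $j$-sum with the $i$-sum; this is legitimate because $T_0(i,a,q^2)=0$ for $|a|>i$, so every sum is finite. Then apply the summation formula to each of the two families $a=4j$ and $a=4j+1$. This gives
\[
\sum_j (-1)^j q^{4j^2+j}\left(q^{c(4j)}{2L\brack L-4j}_{q^2}+q^{c(4j+1)}{2L\brack L-4j-1}_{q^2}\right).
\]

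\textbf{Step 3 (exponents).} Finally, check that $4j^2+j+c(4j)=20j^2+j$ and $4j^2+j+c(4j+1)=20j^2+9j+1$. This forces $c(a)=a^2$ (in base $q$) for both parities: indeed $16j^2$ and $16j^2+8j+1=(4j+1)^2$ produce exactly the stated exponents. So I expect the relevant case of the summation formula to read $\sum_i q^{i^2}{L\brack i}_{q^2}T_0(i,a,q^2)=q^{a^2}{2L\brack L-a}_{q^2}$.

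The main obstacle is matching normalizations. The $T_0$ in \eqref{BMO_GG1_bosonic} is evaluated at base $q^2$, and its internal definition mixes bases $q^2$ and $q$. I must make sure the cited Theorem~3.5 is applied with the correct base substitution and yields exactly the weight $q^{a^2}$ with no stray factor or sign. If the cited form differs, I would first verify the needed identity directly. The simplest route is to check small $L$ and $a$ symbolically. Alternatively, I would prove it via \texttt{qMultiSum}, showing that both sides satisfy the same recurrence in $L$ with matching initial values, as was done for Theorem~\ref{thm:P010_fermionic_rep}. Once that identity is in hand, the rest is bookkeeping.
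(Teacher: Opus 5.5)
Your route is exactly the paper's: it obtains \eqref{eq_BaileydP010} by inserting the Berkovich--McCoy--Orrick identity into the trinomial summation formula \cite[Theorem~3.5]{BU_elementary}. Your Steps 1--2 and the exponent check $4j^2+j+a^2$ for $a=4j,4j+1$ are right. However, the key identity you display is false as written, so your plan to verify it (symbolically or with \texttt{qMultiSum}) would fail. From the paper's definition, $T_0(1,0,q)=q$ and $T_0(1,1,q)=1$. So the case $L=1$, $a=0$ gives $1+q\cdot T_0(1,0,q^2)=1+q^3$ on the left, but $q^{0}{2\brack 1}_{q^2}=1+q^2$ on the right. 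The error comes from the base in \eqref{BMO_GG1_bosonic} as printed, which you inherited. At $L=1$ its left side is $1+q$, while $T_0(1,0,q^2)+T_0(1,1,q^2)=1+q^2$. Read literally, your chain derives a true statement from two false premises.

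The repair is to evaluate the trinomials at base $q$ in both places, keeping the paper's definition of $T_0$ (inner bases $q^2$ and $q$):
\[
\sum_{j,k\geq 0} q^{j^2+k^2}{L-k\brack j}_{q^2}{j\brack k}_{q^2}=\sum_{j=-\infty}^{\infty}(-1)^j q^{4j^2+j}\bigl(T_0(L,4j,q)+T_0(L,4j+1,q)\bigr),
\]
and
\[
\sum_{i=0}^{L} q^{i^2}{L\brack i}_{q^2}T_0(i,a,q)=q^{a^2}{2L\brack L-a}_{q^2}.
\]
Both check out on small cases. For example, $T_0(2,0,q)=1+q^2+q^4$ and $T_0(2,1,q)=q+q^3$ sum to $1+q+q^2+q^3+q^4$, which is the $L=2$ left side of the first identity. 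For the second with $L=2$, $a=0$, the sum is $1+q^2(1+q^2)+q^4(1+q^2+q^4)={4\brack 2}_{q^2}$. With this one change your Steps 1--3 go through verbatim, and $c(a)=a^2$ is exactly the exponent the corrected formula supplies.
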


As $L\rightarrow\infty$, applying the Jacobi Triple Product identity, we get the following.
\begin{corollary}\label{cor_P010JTP}
    \[\sum_{i,j,k\geq 0} \frac{q^{i^2+j^2 + k^2}}{(q^2;q^2)_i}{ i - k\brack j}_{q^2} {j\brack k}_{q^2}= \frac{(q^{40};q^{40})_\infty}{(q^2;q^2)_\infty} \left((q^{19},q^{21};q^{40})_\infty+q(q^{11},q^{29};q^{40})_\infty)\right).\]
\end{corollary}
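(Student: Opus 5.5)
The plan is to take the limit $L\to\infty$ in \eqref{eq_BaileydP010} of Theorem~\ref{thm_BaileyP010} term by term, and then convert the resulting bilateral series into products with the Jacobi Triple Product identity.

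\textbf{Left-hand side.} For fixed $i$, ${L\brack i}_{q^2}\to 1/(q^2;q^2)_i$ as $L\to\infty$. The summand is $O(q^{i^2+j^2+k^2})$ uniformly in $L$, since the remaining $q$-binomials are polynomials with nonnegative coefficients bounded coefficientwise independently of $L$. Hence the limit may be taken inside the sum, coefficient by coefficient in $q$ (Tannery's theorem), which gives the left side of Corollary~\ref{cor_P010JTP}.

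\textbf{Right-hand side.} Write ${2L\brack L-a}_{q^2}=\frac{(q^2;q^2)_{2L}}{(q^2;q^2)_{L-a}(q^2;q^2)_{L+a}}$. For each fixed $a$ this tends to $1/(q^2;q^2)_\infty$. The exponents $20j^2+j$ and $20j^2+9j+1$ grow quadratically in $|j|$, and the coefficients of ${2L\brack L-a}_{q^2}$ are dominated coefficientwise by those of $1/(q^2;q^2)_\infty$. So dominated convergence in the formal power series sense lets us interchange limit and sum, and the right side tends to
\[\frac{1}{(q^2;q^2)_\infty}\left(\sum_{j}(-1)^jq^{20j^2+j}+q\sum_j(-1)^jq^{20j^2+9j}\right).\]

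\textbf{Jacobi Triple Product.} We use $\sum_j(-1)^jq^{Aj^2+Bj}=(q^{A-B},q^{A+B},q^{2A};q^{2A})_\infty$. With $A=20$, $B=1$ the first sum is $(q^{19},q^{21},q^{40};q^{40})_\infty$. With $A=20$, $B=9$ the second sum is $(q^{11},q^{29},q^{40};q^{40})_\infty$. Factoring out $(q^{40};q^{40})_\infty$ gives exactly the stated product side.

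The only step needing care is the justification of the interchange of limit and infinite sum on both sides. This is routine via coefficientwise domination, so I expect no genuine obstacle.
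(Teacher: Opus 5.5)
Your proposal is correct and follows the paper's route exactly: let $L\to\infty$ in \eqref{eq_BaileydP010}, using ${L\brack i}_{q^2}\to 1/(q^2;q^2)_i$ and ${2L\brack L-a}_{q^2}\to 1/(q^2;q^2)_\infty$, then apply the Jacobi Triple Product with $(A,B)=(20,1)$ and $(20,9)$. The paper does this in one line; your coefficientwise-domination justification for swapping the limit and the sums is a correct addition (strictly, the factor to dominate on the left is ${L\brack i}_{q^2}$, by $1/(q^2;q^2)_i$, since the other $q$-binomials do not depend on $L$).
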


Moreover, now \eqref{eq_BaileydP010} can be further iterated by another special case of the Bailey lemma due to Berkovich and the second author \cite[Theorem 2.1]{BerkUncu}. 

\begin{theorem} For non-negative integers $L$ and $p$
\begin{align}\nonumber
    \nonumber\sum_{\substack{i,j,k\geq 0\\m_p\geq \dots\geq m_{1}\geq 0}} &\frac{q^{2m_p^2+2m_{p-1}^2+\dots +2m_1^2+i^2+j^2 + k^2} (q^2;q^2)_{2L}}{(q^2;q^2)_{L-m_p}(q^2;q^2)_{m_p - m_{p-1}}\dots (q^2;q^2)_{m_2-m_1}(q^2;q^2)_{2m_1}} {m_1 \brack i}_{q^2} { i - k\brack j}_{q^2} {j\brack k}_{q^2} \\ \label{eq_double_baileyd_P010}&= \sum_{j=-\infty}^\infty \left((-1)^j q^{(20+32p)j^2+j} {2L\brack L-4j}_{q^2} 
    +(-1)^j q^{(20+32p)j^2+(9+16p)j+1+2p}{2L\brack L-4j-1}_{q^2} 
    \right).
\end{align}
\end{theorem}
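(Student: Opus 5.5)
We will prove \eqref{eq_double_baileyd_P010} by induction on $p$, with Theorem~\ref{thm_BaileyP010} as the base case $p=0$. There the sum over $m_p\ge\dots\ge m_1$ is empty and the left-hand side reduces to $\sum q^{i^2+j^2+k^2}{L\brack i}_{q^2}{i-k\brack j}_{q^2}{j\brack k}_{q^2}$, which we read as the $m_0=L$ instance of the general shape. The inductive step uses the Bailey-lemma special case of Berkovich and the second author \cite[Theorem 2.1]{BerkUncu}, in base $q^2$. In the form we need, it says the following. Suppose, for all $L\ge 0$,
\[
A(L)=\sum_{j}\Bigl(\alpha_j {2L\brack L-j}_{q^2}\Bigr)
\]
for some coefficients $\alpha_j$. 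Then
\[
\sum_{m\ge0}\frac{q^{2m^2}(q^2;q^2)_{2L}}{(q^2;q^2)_{L-m}(q^2;q^2)_{2m}}\,A(m)=\sum_j \alpha_j q^{2j^2}{2L\brack L-j}_{q^2}.
\]
This is the standard Bailey pair relative to $a=1$ written with $(q^2;q^2)_{2L}/((q^2;q^2)_{L-j}(q^2;q^2)_{L+j})$ normalisations.

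First we write the right-hand side of \eqref{eq_BaileydP010} as a single sum $\sum_s \alpha_s{2L\brack L-s}_{q^2}$. We set $\alpha_{4j}=(-1)^jq^{20j^2+j}$ and $\alpha_{4j+1}=(-1)^jq^{20j^2+9j+1}$, and all other $\alpha_s$ vanish. One application of the lemma multiplies $\alpha_s$ by $q^{2s^2}$. For $s=4j$ this gives $q^{32j^2}$. For $s=4j+1$ it gives $q^{32j^2+16j+2}$. So the exponents become $(20+32)j^2+j$ and $(20+32)j^2+(9+16)j+1+2$, matching \eqref{eq_double_baileyd_P010} with $p=1$.

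Iterating $p$ times multiplies by $q^{2ps^2}$, which gives exactly the exponents $(20+32p)j^2+j$ and $(20+32p)j^2+(9+16p)j+1+2p$. On the left-hand side, each application introduces a new outermost summation variable. We apply the lemma with $A(m)$ equal to the $(p-1)$-fold left side evaluated at $L=m$. The factor $(q^2;q^2)_{2m}$ in the numerator of the inner expression then cancels against the denominator $(q^2;q^2)_{2m}$ from the lemma, leaving $1/(q^2;q^2)_{m_p-m_{p-1}}$. This produces the chain $m_p\ge\dots\ge m_1$. Relabelling $L$ of the base identity as $m_1$ also accounts for the $(q^2;q^2)_{2m_1}$ in the denominator.

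The main obstacle is making sure the cited lemma is applied in exactly this normalisation. In particular, the factor $(q^2;q^2)_{2L}/((q^2;q^2)_{L-m}(q^2;q^2)_{2m})$ must be precisely the one that preserves the form ${2L\brack L-s}_{q^2}$ with the weight $q^{2s^2}$. If \cite{BerkUncu} states it for base $q$, we substitute $q\mapsto q^2$ and check that the sign convention and the parity of the index (here $s=4j$ and $s=4j+1$, with no restriction on $s$) are allowed. A direct sanity check would be a small case, for example $L=1$, $p=1$, comparing coefficients symbolically. After that, the induction is a bookkeeping argument on exponents.
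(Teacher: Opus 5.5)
Your proposal is correct and is essentially the paper's argument: the paper obtains this theorem by iterating Theorem~\ref{thm_BaileyP010} $p$ times with the $a=1$, base-$q^2$ Bailey lemma of \cite[Theorem 2.1]{BerkUncu}, which multiplies the coefficient of ${2L\brack L-s}_{q^2}$ by $q^{2s^2}$ and adds one outer summation variable, exactly as in your bookkeeping. The bilateral-index concern you raise is harmless, since ${2L\brack L-s}_{q^2}={2L\brack L+s}_{q^2}$ and $q^{2s^2}$ is even in $s$, so folding $\pm s$ into a standard unilateral Bailey pair commutes with the transformation.
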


The Jacobi Triple Product identity as $L\rightarrow\infty$ implies \eqref{eq_double_baileyd_P010lim}.

In the spirit of \cite{BerkUncu,BU_elementary,UW}, we can map $q\mapsto 1/q$ for the dual identity of the one in Theorem~\ref{thm_BaileyP010}.

\begin{theorem}\label{thm_BaileyP010recip} For a non-negative integer $L$, we have
\begin{align}\nonumber
    q^{2L^2}\sum_{i,j,k\geq 0} q^{i^2 - 2 i j + j^2 + k^2 - 2 i L}& {L \brack i}_{q^2} { i - k\brack j}_{q^2} {j\brack k}_{q^2} \\\label{eq_BaileydP010recip}&= \sum_{j=-\infty}^\infty \left((-1)^j q^{12j^2-j} {2L\brack L-4j}_{q^2} 
    +(-1)^j q^{12 j^2+7j+1}{2L\brack L-4j-1}_{q^2} 
    \right).
\end{align}
\end{theorem}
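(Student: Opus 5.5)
The plan is to obtain \eqref{eq_BaileydP010recip} from \eqref{eq_BaileydP010} by the substitution $q\mapsto 1/q$, and then to clear denominators with a suitable power of $q$. The tool is the standard inversion rule for Gaussian binomials,
\[{n\brack m}_{q^{-2}} = q^{-2m(n-m)}{n\brack m}_{q^2}.\]
Both sides of \eqref{eq_BaileydP010} are polynomials in $q$, and the bilateral sum on the right is finite because ${2L\brack L-4j}_{q^2}$ vanishes for $|4j|>L$. So the substitution is legitimate termwise.

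First, the left-hand side. Under $q\mapsto 1/q$ the product ${L\brack i}{i-k\brack j}{j\brack k}$ picks up the factor
\[q^{-2i(L-i)-2j(i-k-j)-2k(j-k)} = q^{-2iL+2i^2-2ij+2j^2+2k^2}.\]
Combining this with $q^{-i^2-j^2-k^2}$ gives the exponent $i^2-2ij+j^2+k^2-2iL$. This is exactly the summand exponent in \eqref{eq_BaileydP010recip} before multiplying by $q^{2L^2}$.

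Next, the right-hand side. Here ${2L\brack L-4j}_{q^{-2}}=q^{-2(L-4j)(L+4j)}{2L\brack L-4j}_{q^2}=q^{-2L^2+32j^2}{2L\brack L-4j}_{q^2}$. Together with $q^{-20j^2-j}$ this gives $q^{-2L^2}q^{12j^2-j}$.

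For the second family, ${2L\brack L-4j-1}_{q^{-2}}=q^{-2(L-4j-1)(L+4j+1)}{2L\brack L-4j-1}_{q^2}$. The exponent equals $-2L^2+2(4j+1)^2=-2L^2+32j^2+16j+2$. Combining it with $q^{-20j^2-9j-1}$ yields $q^{-2L^2}q^{12j^2+7j+1}$.

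Finally, multiplying both sides by $q^{2L^2}$ gives \eqref{eq_BaileydP010recip} exactly. The signs $(-1)^j$ are unaffected by the substitution, so nothing further is needed.

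The only step needing care is the exponent bookkeeping in the first step. In particular, one must use the convention that ${i-k\brack j}$ vanishes when $i-k<j$, so the inversion formula is applied only where the binomial is nonzero. Since the whole argument is routine once \eqref{eq_BaileydP010} is available, I expect no genuine obstacle.
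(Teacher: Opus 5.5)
Your proposal is correct and is essentially the paper's own argument: the paper obtains Theorem~\ref{thm_BaileyP010recip} exactly by applying $q\mapsto 1/q$ to the identity \eqref{eq_BaileydP010} of Theorem~\ref{thm_BaileyP010}. Your exponent bookkeeping checks out on both sides: the inversion rule ${n\brack m}_{q^{-2}}=q^{-2m(n-m)}{n\brack m}_{q^2}$ produces $i^2-2ij+j^2+k^2-2iL$ on the left and $-2L^2+12j^2-j$, $-2L^2+12j^2+7j+1$ on the right, and multiplying through by $q^{2L^2}$ gives the stated identity.
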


We would like to take the limit $L\rightarrow\infty$ in \eqref{eq_BaileydP010recip}. However, the appearance of the $ L$- dependent terms on the left-hand side $q$-term requires us to study this a little. On the left-hand side of \eqref{eq_BaileydP010recip}, let $m= i-j$, then the quadratic that appears as at the exponent of $q$ in the summand becomes $2L^2 - 2iL + m^2 + k^2$, with natural summation bounds of $m$ being from $0$ to $i$. Then, once $L\rightarrow\infty$, only the terms when $i=L$ survive. Moreover, these terms' limit is the same as the limit $L\rightarrow\infty$ of \eqref{BMO_GG1_bosonic}'s left-hand side. We also recall that \eqref{BMO_GG1_bosonic}'s left-hand side is equal to the product generating function of the first G\"{o}llnitz--Gordon identity (Theorem~\ref{thm_GG}). When $L\rightarrow\infty$, the right-hand side of \eqref{eq_BaileydP010recip} can be summed with the Jacobi Triple Product identity as in \eqref{cor_P010JTP}.

\begin{corollary}\label{cor_GG_dissec}
    \begin{equation}\label{eq_GG_dissec}
        \frac{1}{(q,q^4,q^7;q^8)_\infty} = \frac{(q^{24};q^{24})}{(q;q)_\infty}\left( (q^{11},q^{13};q^{24})_\infty + q (q^{5},q^{19};q^{24})_\infty\right).
    \end{equation}
\end{corollary}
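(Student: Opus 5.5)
The plan is to take $L\to\infty$ on both sides of \eqref{eq_BaileydP010recip} and identify each limit separately. I take Theorem~\ref{thm_BaileyP010recip} as given.

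\textbf{Left-hand side.} I substitute $m=i-j$. The $q$-exponent of the summand becomes
\[
2L^2-2iL+m^2+k^2=2(L-i)^2+2i(L-i)+m^2+k^2 .
\]
Every term with $i<L$ therefore carries a factor $q^{2(L-i)L}$. Such a term has exponent at least $2L$, so it tends to $0$ in the $q$-adic sense.

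To make this rigorous, I split the sum into $i=L$ and $i<L$. For $i<L$ the exponent satisfies $2(L-i)L+m^2+k^2\geq 2L$. The finitely many binomials involved are polynomials with nonnegative coefficients, and their number of terms is at most polynomial in $L$. Hence the $i<L$ part is $O(q^{2L})$ coefficientwise.

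For $i=L$ we have ${L\brack L}_{q^2}=1$ and $j=L-m$. The surviving term is
\[
\sum_{j,k\geq0} q^{(L-j)^2+k^2}{L-k\brack j}_{q^2}{j\brack k}_{q^2}.
\]
This does not obviously converge, so my first attempt would instead be to compare it directly with the left side of \eqref{BMO_GG1_bosonic} via $j\mapsto L-k-j$. Using ${L-k\brack j}={L-k\brack L-k-j}$, the term becomes
\[
q^{(k+j)^2+k^2}{L-k\brack j}_{q^2}{L-k-j\brack k}_{q^2}.
\]
The exponent is at least $(k+j)^2$, so for each fixed power of $q$ only finitely many $(j,k)$ contribute. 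As $L\to\infty$ the binomials tend to $1/(q^2;q^2)_j$ and $1/(q^2;q^2)_k$.

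The limit is thus
\[
\sum_{j,k} \frac{q^{(j+k)^2+k^2}}{(q^2;q^2)_j(q^2;q^2)_k}.
\]
On the other side, the limit of the left-hand side of \eqref{BMO_GG1_bosonic} is
\[
\sum_{j,k}\frac{q^{j^2+k^2}}{(q^2;q^2)_{j-k}(q^2;q^2)_k}.
\]
Setting $j\mapsto j+k$ in the latter shows the two agree. By Theorem~\ref{thm:P010_fermionic_rep} and the discussion following it (equivalently Theorem~\ref{thm_GG} with \eqref{F010_product}), this limit equals $1/(q,q^4,q^7;q^8)_\infty$.

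\textbf{Right-hand side.} For fixed $j$, ${2L\brack L-4j}_{q^2}\to 1/(q^2;q^2)_\infty$, and the Gaussian factor $q^{12j^2}$ gives dominated convergence coefficientwise. The limit is
\[
\frac{1}{(q^2;q^2)_\infty}\left(\sum_j(-1)^jq^{12j^2-j}+q\sum_j(-1)^jq^{12j^2+7j}\right).
\]
By the Jacobi Triple Product identity, $\sum_j(-1)^jq^{12j^2+bj}=(q^{12-b},q^{12+b},q^{24};q^{24})_\infty$. This gives
\[
\frac{(q^{24};q^{24})_\infty}{(q^2;q^2)_\infty}\Big((q^{11},q^{13};q^{24})_\infty+q(q^{5},q^{19};q^{24})_\infty\Big).
\]

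\textbf{Normalization.} Equating the two limits yields \eqref{eq_GG_dissec}, except that this route produces $(q^2;q^2)_\infty$ in the denominator rather than the $(q;q)_\infty$ printed in the statement. I would check this normalization carefully, for instance by expanding both sides to low order, and correct the corollary accordingly if needed; I expect the printed $(q;q)_\infty$ to be a typo.

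\textbf{Main obstacle.} The step I expect to be hardest is the rigorous justification that the $i<L$ terms vanish. This requires a uniform bound on the number of monomials contributed by the binomials, so that exponents $\geq 2L$ genuinely force each fixed coefficient to stabilize.
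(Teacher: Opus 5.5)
Your argument is correct and follows the paper's route: let $L\to\infty$ in Theorem~\ref{thm_BaileyP010recip}, use $m=i-j$ to see that only the $i=L$ terms survive, identify their limit with that of the left side of \eqref{BMO_GG1_bosonic}, and sum the right side with the Jacobi triple product. Your reindexing $j\mapsto L-k-j$ supplies the convergence argument that the paper only asserts, and the $i<L$ part needs no counting of monomials, since each such summand is a polynomial divisible by $q^{2L}$. You are also right that the denominator comes out as $(q^2;q^2)_\infty$, not $(q;q)_\infty$: the printed version already fails at the coefficient of $q^1$ (it gives $2$ instead of $1$), and the paper's own Corollary~\ref{cor_P010JTP} and its $G_2$ analogue both carry $(q^2;q^2)_\infty$.
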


Corollary~\ref{cor_GG_dissec} can also be recovered by \cite{AlladiGG}, where Alladi derives an equivalent identity (\eqref{eq_GG_dissec} with $q\mapsto-q^2$) using a dissection of Euler's Pentagonal Number Theorem.

Here we can apply \cite[Theorem 2.1]{BerkUncu} to Theorem~\ref{thm_BaileyP010recip} and see the entire infinite hierarchy that grows from the seed identity of Theorem~\ref{thm_BaileyP010recip}.

\begin{theorem}
    For non-negative integers $L$ and $p$
\begin{align}\nonumber
    \nonumber\sum_{\substack{i,j,k\geq 0\\m_p\geq \dots\geq m_{1}\geq 0}} &\frac{q^{2m_p^2+2m_{p-1}^2+\dots +2m_2^2+4m_1^2-2im_1+i^2-2ij+j^2 + k^2} (q^2;q^2)_{2L}}{(q^2;q^2)_{L-m_p}(q^2;q^2)_{m_p - m_{p-1}}\dots (q^2;q^2)_{m_2-m_1}(q^2;q^2)_{2m_1}} {m_1 \brack i}_{q^2} { i - k\brack j}_{q^2} {j\brack k}_{q^2} \\ \label{eq_double_baileyd_P010_dual}&= \sum_{j=-\infty}^\infty \left((-1)^j q^{(12+32p)j^2-j} {2L\brack L-4j}_{q^2} 
    +(-1)^j q^{(12+32p)j^2+(7+16p)j+1+2p}{2L\brack L-4j-1}_{q^2} 
    \right).
\end{align}
\end{theorem}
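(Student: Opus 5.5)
The plan is to obtain \eqref{eq_double_baileyd_P010_dual} by iterating the seed identity of Theorem~\ref{thm_BaileyP010recip} $p$ times with the Bailey-type lemma \cite[Theorem 2.1]{BerkUncu}. This is exactly how \eqref{eq_double_baileyd_P010} was produced from Theorem~\ref{thm_BaileyP010}. The iteration step I have in mind has the following shape. Suppose that for all $L$
\[
\sum_{i\ge0} q^{2L^2-2iL}\,\frac{(q^2;q^2)_{2L}}{(q^2;q^2)_{L-i}(q^2;q^2)_{2i}}\,G(i)\;=\;\sum_j q^{Aj^2+Bj}\Bigl({2L\brack L-4j}_{q^2}\text{-type terms}\Bigr).
\]
Then multiplying by the Bailey kernel $q^{2m^2}(q^2;q^2)_{2L}/\bigl((q^2;q^2)_{L-m}(q^2;q^2)_{2m}\bigr)$ and summing over $m$ reproduces the right side with $q^{Aj^2+Bj}{2L\brack L-a}_{q^2}$ replaced by $q^{(A+32)j^2+\dots}{2L\brack L-a}_{q^2}$. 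In other words, it is the conjugate Bailey pair ${2L\brack L-a}_{q^2}\mapsto \sum_m q^{2m^2}\frac{(q^2;q^2)_{2L}}{(q^2;q^2)_{L-m}(q^2;q^2)_{2m}}{2m\brack m-a}_{q^2}=q^{2a^2}{2L\brack L-a}_{q^2}$.

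With $a=4j$ and $a=4j+1$ this gives $q^{32j^2}$ and $q^{32j^2+16j+2}$. These account for the shifts $(12,-1)\to(44,-1)$ and $(12,7,1)\to(44,23,3)$, matching $p=1$ of \eqref{eq_double_baileyd_P010_dual}.

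The key steps, in order, are as follows.

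First, rewrite Theorem~\ref{thm_BaileyP010recip} in Bailey form. Multiply both sides by nothing, but regroup the left side using ${L\brack i}_{q^2}=\frac{(q^2;q^2)_{2L}}{(q^2;q^2)_{L-i}(q^2;q^2)_{2i}}\cdot\frac{(q^2;q^2)_{2i}}{(q^2;q^2)_{2L}}\cdot\frac{(q^2;q^2)_L}{(q^2;q^2)_i}$. The point is to identify the $L$-dependence precisely. I expect that the factor $q^{2L^2-2iL}$, together with ${L\brack i}$, is what becomes, after the first iteration, the term $4m_1^2-2im_1$ with denominator $(q^2;q^2)_{2m_1}$ appearing in the statement.

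Second, perform the first application of the lemma. Replace $L$ by $m_1$ in the seed. Multiply by $q^{2m_2^2}\cdot(\ldots)$ in the form of \cite[Theorem 2.1]{BerkUncu}, which sums over $m_1$ and turns each ${2m_1\brack m_1-a}$ into $q^{2a^2}{2L\brack L-a}$ (with $m_2\to L$ at the top level). On the left, ${m_1\brack i}$ and $q^{2m_1^2-2im_1}$ combine with the kernel $q^{2m_1^2}/(q^2;q^2)_{2m_1}$ to give exactly $q^{4m_1^2-2im_1}{m_1\brack i}/(q^2;q^2)_{2m_1}$.

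Third, iterate $p-1$ further times, which contributes the chain $q^{2m_p^2+\dots+2m_2^2}/\prod(q^2;q^2)_{m_{t+1}-m_t}$ and the outer $(q^2;q^2)_{2L}/(q^2;q^2)_{L-m_p}$. For $p=0$ the identity must reduce to \eqref{eq_BaileydP010recip}. This should be read with the convention $m_0:=L$, so that there is no chain and the seed form from the first step applies.

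The main obstacle is the bookkeeping in the first step: seeing that the seed's left side is literally of the shape $\sum_{i}\frac{(q^2;q^2)_{2L}}{(q^2;q^2)_{L-i}(q^2;q^2)_{2i}}\beta_i$ required by \cite[Theorem 2.1]{BerkUncu}, or else adjusting the seed by a factor absorbed into the first kernel so that this holds. I would first check the $p=1$ case by expanding both sides to several orders in $q$ for small $L$, to pin down the exponents $4m_1^2-2im_1$ and the linear terms $(7+16p)j+1+2p$. After that, induction on $p$ is mechanical, since each step adds $32j^2$ (resp.\ $32j^2+16j+2$) to the exponents.
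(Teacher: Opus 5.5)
Your proposal is correct and follows the paper's argument. The paper obtains this hierarchy by applying \cite[Theorem 2.1]{BerkUncu} $p$ times to the seed identity of Theorem~\ref{thm_BaileyP010recip}, and that lemma is exactly your conjugate pair $\sum_m q^{2m^2}\frac{(q^2;q^2)_{2L}}{(q^2;q^2)_{L-m}(q^2;q^2)_{2m}}{2m\brack m-a}_{q^2}=q^{2a^2}{2L\brack L-a}_{q^2}$.

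The ``main obstacle'' you flag is not real. The lemma only needs the right-hand side to be an $L$-independent combination of ${2L\brack L-a}_{q^2}$, valid for all $a\in\mathbb{Z}$ since ${2m\brack m-a}_{q^2}={2m\brack m+a}_{q^2}$. Your second step already shows the left side becomes $q^{4m_1^2-2im_1}{m_1\brack i}_{q^2}/(q^2;q^2)_{2m_1}$, once the first kernel is taken as $q^{2m_1^2}$ rather than the $q^{2m_2^2}$ you wrote.
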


As $L\rightarrow\infty$, we get \eqref{eq_double_baileyd_P010_dual_lim}.

\section{Profile $000$}\label{sec:000}

The diagram of a partition in $\text{DSPP}_{000}$ with at most $4n$ parts can be of the following shapes.
\begin{center}
\begin{ytableau}
a_1 & b_1 & c_1 & d_1\\
\none & a_2 & b_2 & c_2 & d_2\\
\none & \none & \none[\ddots] & \none[\ddots] & \none[\ddots] & \none[\ddots]\\
\none & \none & \none & a_{n} & b_{n} & c_n & d_n
\end{ytableau}
\hspace{2cm}
\begin{ytableau}
a_1 & b_1 & c_1 & d_1\\
\none & a_2 & b_2 & c_2 & d_2\\
\none & \none & \none[\ddots] & \none[\ddots] & \none[\ddots] & \none[\ddots]\\
\none & \none & \none & a_{n} & b_{n} & c_n &0 \\
\none & \none & \none & \none & a_{n+1}
\end{ytableau}
\end{center}
The first diagram represents those with diagonal length bounded by $n$, that is, the set $\text{DSPP}_{000}(n)$. Let $\text{DSPP}'_{000}(n)$ be the set of double shifted partitions with profile ${000}$ such that $a_{n+1}>0$ and $b_{n+1}=c_{n+1}=d_{n}=0$. Finally, let $\text{DSPP}^{T}_{000}(n)$ be the set of double shifted partitions with profile ${000}$ that have at most $4n$ parts. Then $\text{DSPP}_{000}(n)\cap\text{DSPP}'_{000}(n)=\emptyset$ and $\text{DSPP}^{T}_{000}(n)=\text{DSPP}_{000}(n)\cup\text{DSPP}'_{000}(n)$. Let 
$$F_{000}(n):=\sum_{\Lambda\in\text{DSPP}_{000}(n)}q^{|\Lambda|},\quad F'_{000}(n):=\sum_{\Lambda\in\text{DSPP}'_{000}(n)}q^{|\Lambda|}\quad\text{and}\quad F^{T}_{000}(n):=\sum_{\Lambda\in\text{DSPP}^{T}_{000}(n)}q^{|\Lambda|}$$
be their generating functions, respectively.
\begin{theorem}
The crude form of $F_{000}(n)$ is
\begin{equation}
\begin{split}
F_{000}(n)=&\underset{\geq}{\Omega} \frac{1}{(1-q\lambda_{1,1}\mu_{1,1})}\left(1-\frac{q\lambda_{1,2}\mu_{1,2}}{\lambda_{1,1}}\right)^{-1}\left(1-\frac{q\lambda_{1,3}\mu_{1,3}}{\lambda_{1,2}}\right)^{-1}\prod_{i=1}^{n}\left(1-\frac{q\lambda_{i,i+3}\mu_{i,i+3}}{\lambda_{i,i+2}}\right)^{-1}\\
&\times\prod_{i=2}^{n}\left(1-\frac{q\lambda_{i,i}\mu_{i,i}}{\mu_{i-1,i}}\right)^{-1}\left(1-\frac{q\lambda_{i,i+1}\mu_{i,i+1}}{\lambda_{i,i}\mu_{i-1,i+1}}\right)^{-1}\left(1-\frac{q\lambda_{i,i+2}\mu_{i,i+2}}{\lambda_{i,i+1}\mu_{i-1,i+2}}\right)^{-1}.
\end{split}
\end{equation}
\end{theorem}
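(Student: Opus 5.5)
This crude form is a direct bookkeeping statement, so I would follow the template used for $F_{010}(n)$ in Section~\ref{sec:PA}. I first fix the shape of a partition in $\text{DSPP}_{000}(n)$ from the first diagram, with entries $a_i,b_i,c_i,d_i$ for $1\le i\le n$. I set all entries with index $>n$ equal to $0$, and also use the boundary convention $\lambda$- or $\mu$-variables with out-of-range indices do not appear. Cell $(i,i)$ holds $a_i$, cell $(i,i+1)$ holds $b_i$, cell $(i,i+2)$ holds $c_i$ and cell $(i,i+3)$ holds $d_i$.

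I then list the defining inequalities. The row conditions are
$$a_i-b_i\ge0,\quad b_i-c_i\ge0,\quad c_i-d_i\ge0,\quad d_i\ge0,$$
and the column conditions are
$$a_1\ge 0,\quad b_1\ge0,\quad c_1\ge0,\quad d_i\ge0,\quad b_{i-1}-a_i\ge0,\quad c_{i-1}-b_i\ge0,\quad d_{i-1}-c_i\ge0 \quad (2\le i\le n).$$
For $n\ge i\ge 1$, I encode these as follows:
\begin{itemize}
\item the row inequality ending at cell $(i,j)$ gets the variable $\lambda_{i,j}$ with that exponent;
\item the column inequality gets $\mu_{i,j}$, where $(i,j)$ is the lower cell, or the cell itself for the top-row non-negativity conditions.
\end{itemize}
In practice each cell $(i,j)$ carries a factor $\lambda_{i,j}\mu_{i,j}$ to the power of its own entry. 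It also carries $\lambda_{i,j-1}^{-\text{entry}}$ when it has a left neighbour in the same row, and $\mu_{i+1,j}^{-\text{entry}}$ when the cell below exists. The crude form then reads
$$F_{000}(n)=\underset{\geq}{\Omega}\sum q^{\sum(a_i+b_i+c_i+d_i)}\prod(\text{these monomials}).$$

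Next I regroup the summand by entry rather than by inequality, exactly as in the $F_{010}$ computation, and sum each entry as a geometric series.
\begin{itemize}
\item $a_1$ has no left neighbour, giving $(1-q\lambda_{1,1}\mu_{1,1})^{-1}$.
\item $b_1$ and $c_1$ have left neighbours and no cells above, giving $(1-q\lambda_{1,2}\mu_{1,2}/\lambda_{1,1})^{-1}$ and $(1-q\lambda_{1,3}\mu_{1,3}/\lambda_{1,2})^{-1}$.
\item Each $d_i$ has a left neighbour $c_i$ but no cell above in the diagram, since the cell above it is $(i-1,i+3)$, which is empty. This gives $(1-q\lambda_{i,i+3}\mu_{i,i+3}/\lambda_{i,i+2})^{-1}$ for all $1\le i\le n$.
\item For $i\ge2$, $a_i$ sits below $b_{i-1}$ (cell $(i-1,i)$) and has no left neighbour, giving $(1-q\lambda_{i,i}\mu_{i,i}/\mu_{i-1,i})^{-1}$. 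Similarly $b_i$ and $c_i$ give the last two factors in the second product.
\end{itemize}
Here I read the $\mu$ attached to the cell above as carrying the negative exponent, matching the indexing displayed in the statement.

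The only real obstacle is index consistency: the $\mu$-labels must be attached so that each column inequality receives exactly one positive and one negative occurrence. I would check this by verifying that the monomial exponent of every variable $\lambda_{i,j}$, $\mu_{i,j}$ equals the intended difference. I would also confirm the boundary cells, in particular that $d_i$ carries no $\mu^{-1}$ factor. Finally, I would sanity-check the formula by computing $F_{000}(1)$ with the elimination rules \eqref{eq:Elimination1}--\eqref{eq:Elimination3}. The result should be $1/(q;q)_4$, since a single row $a_1\ge b_1\ge c_1\ge d_1$ is generated by $1/(q;q)_4$.
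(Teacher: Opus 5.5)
Your plan is the paper's own: it follows the $F_{010}(n)$ template of Section~\ref{sec:PA}, and the paper states the $000$ crude form without a separate argument. The factors you finally write down are exactly the displayed ones, and your $\lambda$-rule is correct: writing $e(i,j)$ for the entry of cell $(i,j)$, it gives $\lambda_{i,j}$ the exponent $e(i,j)-e(i,j+1)$, with $e(i,i+4)=0$.

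The gap is in the $\mu$-bookkeeping, which is the whole content of a crude-form statement. As you state it, it is upside down, and you reach the displayed $\mu$-factors only by switching to ``the $\mu$ attached to the cell above'' to match the target. Taken literally, your per-cell rule ($\mu_{i,j}^{e(i,j)}$ together with $\mu_{i+1,j}^{-e(i,j)}$ when the cell below exists) gives $\mu_{i,j}$ the total exponent $e(i,j)-e(i-1,j)$. It therefore encodes columns that weakly \emph{increase} downwards, contrary to your own list $b_{i-1}-a_i\ge0$, $c_{i-1}-b_i\ge0$, $d_{i-1}-c_i\ge0$. Concretely, it would attach $\mu_{i+1,i+3}^{-d_i}$ to $d_i$ (the cell below $d_i$ holds $c_{i+1}$), and no $\mu^{-1}$ at all to $a_i$, since the cell $(i+1,i)$ is empty. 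Your first bullet (``$(i,j)$ is the lower cell'') and your non-negativity conditions $a_1,b_1,c_1,d_i\ge0$, which sit at the tops of the columns, belong to the same reversed convention. They are not what the displayed formula encodes.

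The rule that actually yields the statement indexes $\mu_{i,j}$ by the \emph{upper} cell, with exponent $e(i,j)-e(i+1,j)$, where $e(i+1,j)=0$ when that cell is absent. Equivalently, each entry carries $(\lambda_{i,j}\mu_{i,j})^{e(i,j)}$, $\lambda_{i,j-1}^{-e(i,j)}$ if it has a left neighbour, and $\mu_{i-1,j}^{-e(i,j)}$ if it has a cell above. Then $\mu_{i-1,i}$, $\mu_{i-1,i+1}$, $\mu_{i-1,i+2}$ carry $b_{i-1}-a_i$, $c_{i-1}-b_i$, $d_{i-1}-c_i$. The (redundant) non-negativity conditions sit at the bottoms of the columns: $\mu_{i,i}$ carries $a_i$ for every $i$, while $\mu_{n,n+1},\mu_{n,n+2},\mu_{n,n+3}$ carry $b_n,c_n,d_n$. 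With this rule stated up front, your regrouping goes through verbatim.

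Note also that the $n=1$ sanity check you propose cannot detect an error of this kind. For $n=1$ every column has a single cell, so no $\mu$ ever occurs with a negative exponent. Comparing with $F_{000}(2)=(1+q^3+q^4+q^5+q^8)/(q;q)_8$ would catch it: under the reversed convention both $a_1$ and $a_2$ are maximal, so the coefficient of $q$ would be $2$ instead of $1$.
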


\begin{theorem}
The initial values of $F_{000}(n)$ are
\begin{equation*}
F_{000}(1)= \frac{1}{(q;q)_4}, 
\end{equation*}
\begin{equation*}
F_{000}(2)=\frac{1}{(q;q)_8}(1+q^3+q^4+q^5+q^8).    
\end{equation*}
\end{theorem}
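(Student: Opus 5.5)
The plan is to compute both values directly from the crude form above, mirroring the computation of $F_{010}(1)$ in Section~\ref{sec:PA}. As an independent check, and possibly a cleaner proof, I would also compute them by Stanley's theory of $P$-partitions.

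For $n=1$ the crude form contains only the four factors attached to $a_1,b_1,c_1,d_1$, with the $\mu$'s appearing only in numerators. The plan is as follows.
\begin{enumerate}
\item Apply \eqref{eq:Elimination1} to remove all the $\mu$'s.
\item Eliminate $\lambda_{1,3}$ with \eqref{eq:Elimination1}.
\item Eliminate $\lambda_{1,2}$ and $\lambda_{1,1}$ in turn with \eqref{eq:Elimination2}.
\end{enumerate}
Each step produces one new factor $1/(1-q^k)$, so the result is $1/(q;q)_4$. Combinatorially this is obvious: a DSPP$_{000}$ of diagonal length $1$ is the single row $a_1\ge b_1\ge c_1\ge d_1\ge 0$, that is, a partition into at most four parts.

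For $n=2$ the diagram has the two rows $a_1\ge b_1\ge c_1\ge d_1$ and $a_2\ge b_2\ge c_2\ge d_2$, with shifted column conditions $b_1\ge a_2$, $c_1\ge b_2$, $d_1\ge c_2$. These cells form an $8$-element poset $P$, and $F_{000}(2)$ is its $P$-partition generating function. By Stanley's fundamental lemma it equals $\sum_{\sigma} q^{\mathrm{maj}(\sigma)}/(q;q)_8$, summed over the linear extensions $\sigma$ of $P$ with respect to a natural labeling.

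The linear extensions must begin with $a_1,b_1$. After that they merge the chain $c_1,d_1$ with the chain $a_2,b_2,c_2,d_2$, subject to $b_2$ coming after $c_1$ and $c_2$ coming after $d_1$. This gives exactly five extensions:
\begin{itemize}
\item $c_1d_1a_2b_2c_2d_2$,
\item $c_1a_2d_1b_2c_2d_2$,
\item $c_1a_2b_2d_1c_2d_2$,
\item $a_2c_1d_1b_2c_2d_2$,
\item $a_2c_1b_2d_1c_2d_2$.
\end{itemize}
Five extensions matches the numerator value $5$ at $q=1$.

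The remaining work is to fix the natural labeling $a_1,b_1,c_1,d_1,a_2,b_2,c_2,d_2 \mapsto 1,\dots,8$ and compute the major index of each extension. I expect these to come out as $0,3,4,5,8$, giving the numerator $1+q^3+q^4+q^5+q^8$.

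The main obstacle is keeping the descent convention consistent with the weakly decreasing direction of the rows and columns. One may need to use the reversed labeling or the comajor index; I would sanity-check the convention against $n=1$ and against the coefficient of $q$, which should be zero.

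As a fallback, I would run the Omega elimination on the $n=2$ crude form:
\begin{enumerate}
\item Remove all $\mu$ variables that occur only in numerators with \eqref{eq:Elimination1}.
\item Eliminate the variables $\lambda_{i,j}$ one at a time, choosing at each stage one that appears in at most two numerator factors and one denominator factor, so that \eqref{eq:Elimination2} or \eqref{eq:Elimination3} applies.
\end{enumerate}
This is the step most likely to require an additional elimination rule, or the package \cite{OmegaPackage}, as the paper itself notes.
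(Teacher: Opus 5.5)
Your proposal is correct, and for $n=2$ it takes a genuinely different route from the paper. The paper obtains these initial values the same way it obtains $F_{010}(1)$ in Section~\ref{sec:PA}. It applies the Omega operator to the crude form, eliminates the $\lambda$'s and $\mu$'s with rules such as \eqref{eq:Elimination1}--\eqref{eq:Elimination3} (in practice with the Omega package), and simplifies the resulting rational function.

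You instead identify $\mathrm{DSPP}_{000}(2)$ with the order-reversing maps on the $8$-element diagram poset and apply Stanley's fundamental lemma. The denominator $(q;q)_8$ then comes for free, and the numerator is read off from five linear extensions. The step you left as an expectation works with no change of convention. In Stanley's setting ($s<t$ implies $\sigma(s)\ge\sigma(t)$, so $a_1$ is the unique minimal element), your labeling is natural and the ordinary major index is the right statistic. Your five extensions, in your order, read $12345678$, $12354678$, $12356478$, $12534678$, $12536478$. Their descent sets are $\emptyset,\{4\},\{5\},\{3\},\{3,5\}$, so their major indices are $0,4,5,3,8$, giving $1+q^3+q^4+q^5+q^8$. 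Reversing the labeling would count strict $P$-partitions instead, and your $n=1$ check would catch that.

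As for what each route buys: the Omega computation is mechanical and uniform across profiles and values of $n$, which is what the paper needs to feed its recurrence guessing. Your route is a short hand verification. It also explains why $P_{000}(n)=(q;q)_{4n}F_{000}(n)$ is a polynomial with nonnegative coefficients whose value at $q=1$ is the number of linear extensions. Beyond that, it suggests a conceptual reason for the palindromicity in Section~\ref{sec:Palindromity}, via Stanley's reciprocity for posets in which all saturated chains from a given element up to a maximal element have the same length. By my check, this holds for the $000$, $010$ and $001$ diagram posets and fails for the original $100$ cut already at $n=1$ ($b_1<a_1$ versus $b_1<c_1<d_1$).

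One small slip in your $n=1$ Omega plan. In the crude form, $\lambda_{1,3}$ also occurs in the denominator of the $d_1$ factor, whose numerator carries $\lambda_{1,4}$. Eliminating from that end therefore means removing $\lambda_{1,4}$ by \eqref{eq:Elimination1}, then $\lambda_{1,3}$ by \eqref{eq:Elimination2}. At that point $\lambda_{1,2}$ sits in one numerator factor and two denominator factors, which none of \eqref{eq:Elimination1}--\eqref{eq:Elimination3} covers. Eliminating $\lambda_{1,1},\lambda_{1,2},\lambda_{1,3}$ in that order with \eqref{eq:Elimination2}, and then $\lambda_{1,4}$ with \eqref{eq:Elimination1}, does produce one factor $1/(1-q^k)$ per step, as you intended. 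In any case, your observation that the diagram is the single row $a_1\ge b_1\ge c_1\ge d_1\ge 0$ settles $n=1$ on its own.
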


\begin{theorem}\label{F000Rec}
For any $n\geq1$,
\begin{equation}\label{eq:F000Rec}
F_{000}(n)=\frac{1+q^{4n-3}-q^{8n-7}}{(q^{4n-3};q)_4}F_{000}(n-1)+\frac{1+q^{4n-3}}{(q^{4n-3};q)_{4}}F'_{000}(n-1),
\end{equation}
\begin{equation}\label{eq:F'000Rec}
\begin{split}
F'_{000}(n)=&\frac{(q^{4n}+q^{4n-1})(1+q^{4n-3}-q^{8n-7})+q^{4n-2}(1-q^{4n-4})}{(q^{4n-3};q)_{4}}F_{000}(n-1)\\
&+\frac{(q^{4n}+q^{4n-1})(1+q^{4n-3})+q^{4n-2}}{(q^{4n-3};q)_4}F'_{000}(n-1).
\end{split}
\end{equation}  
\end{theorem}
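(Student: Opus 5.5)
I will prove Theorem~\ref{F000Rec} the same way as Theorem~\ref{thm:F010Rec}. I take $\pi\in\text{DSPP}_{000}(n)$, or $\pi\in\text{DSPP}'_{000}(n)$ for \eqref{eq:F'000Rec}, and peel off its last diagonal layer. That layer consists of $a_n,b_n,c_n,d_n$ in the first case, and of $b_n,c_n,d_{n-1},a_{n+1}$ (with $a_n$ staying in the $4(n-1)+1$ residual) in the primed case. I reduce the remaining diagram by subtracting the layer's values one at a time from every larger entry. The subtracted weight is $q^{(4n-3)x_1+(4n-2)x_2+(4n-1)x_3+4nx_4}$ with $x_i\ge0$, which produces the common factor $1/(q^{4n-3};q)_4$. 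The only question is which orderings of the layer entries are compatible with the profile $000$, and what numerator correction each ordering needs.

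For profile $000$ the rows read $a_i\ge b_i\ge c_i\ge d_i$ and the columns give $a_{i}\ge \dots$, with $b_i\ge a_{i+1}$, $c_i\ge b_{i+1}$, $d_i\ge c_{i+1}$. So within the bottom row the order $a_n\ge b_n\ge c_n\ge d_n$ is forced. The case split then comes from comparing $a_n$ with $d_{n-1}$:
\begin{itemize}
\item[(a)] if $a_n\le d_{n-1}$, the reduced object lies in $\text{DSPP}_{000}(n-1)$;
\item[(b)] if $a_n>d_{n-1}$, I delete $d_{n-1},b_n,c_n,d_n$ and $a_n$ survives as the new $a_{(n-1)+1}$, so the reduced object lies in $\text{DSPP}'_{000}(n-1)$.
\end{itemize}

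In case (b) there are two relative orders, $d_{n-1}\ge b_n$ and $b_n>d_{n-1}$. The second is handled by subtracting $1$ from everything except one entry and swapping, which contributes the extra $q^{4n-3}$. This explains the numerator $1+q^{4n-3}$.

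In case (a) the same swapping trick gives $1+q^{4n-3}$. Here, however, the swapped configuration can also land on the boundary $a_n=d_{n-1}$ in a way that would be counted twice. The correction $-q^{8n-7}$ removes this double count, being the weight of two such unit shifts applied simultaneously. I will verify this against $F_{000}(2)$ from the initial values.

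For \eqref{eq:F'000Rec} I repeat the analysis with the extra positive part $a_{n+1}$. The condition $a_{n+1}>0$ together with $d_n=0$ means the peeled layer carries an extra forced shift. This gives the prefactors $q^{4n}$ or $q^{4n-1}$, according to whether $a_{n+1}\le c_n$ or not. The residual boundary case contributes $q^{4n-2}(1-q^{4n-4})$, which is again an inclusion–exclusion correction for the overlap at $a_n=d_{n-1}$.

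The main obstacle is organising case (a) and the primed recurrence so that the sign-carrying terms $-q^{8n-7}$ and $-q^{8n-6}$ come out exactly. Unlike profile $010$, here the correspondence is not a clean sum of bijective subcases. If the combinatorial bookkeeping gets unwieldy, I will fall back on an algebraic check. I will compute $F_{000}(n)$ and $F'_{000}(n)$ for small $n$ with the crude Omega form and the elimination rules \eqref{eq:Elimination1}–\eqref{eq:Elimination3}. I will then uncouple the system as in \eqref{P010_rec_uncoupled}, guess a fermionic form analogous to Theorem~\ref{thm:P010_fermionic_rep}, and prove by qMultiSum that this form satisfies the recurrence. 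That would certify the recurrence but not derive it. For a full proof I will still need to justify the recurrence combinatorially, for instance by interpreting the minus signs as subtracting the weight-shifted images of the overcounted configurations.
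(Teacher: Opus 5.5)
Your overall plan (peel off the last layer, compare $a_n$ with $d_{n-1}$, read off $1/(q^{4n-3};q)_4$) is the route the paper intends when it calls the proof ``similar to Theorem~\ref{thm:F010Rec}''. However, the case bookkeeping you sketch is false. The generating functions you assign to cases (a) and (b) are not the true ones, so no choice of swaps can realize them.

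\textbf{Case (a).} $d_{n-1}$ is the unique minimum of rows $1,\dots,n-1$, and in profile $000$ the bottom row is a chain $a_n\ge b_n\ge c_n\ge d_n$. So case (a) admits only one ordering. The peeling is a bijection onto $\text{DSPP}_{000}(n-1)\times\mathbb{N}^4$, and case (a) contributes exactly $F_{000}(n-1)/(q^{4n-3};q)_4$, with no $q^{4n-3}$ and no $-q^{8n-7}$.

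\textbf{Case (b).} This case does not land in $\text{DSPP}'_{000}(n-1)$. When $b_n>d_{n-1}$, the largest removed entry is $b_n$, and the surviving $a_n-b_n$ can be $0$.

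\textbf{Check at $n=2$.} Your assignment gives numerators (over $(q;q)_8$) $1+q^5-q^9$ for case (a) and $q^3+q^4+q^8+q^9$ for case (b). The true ones are $1$ and $q^3+q^4+q^5+q^8$. Only the sums agree, which is why checking against $F_{000}(2)$ would not expose the error.

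\textbf{The missing idea} is a three-way split: $a_n\le d_{n-1}$; $a_n>d_{n-1}\ge b_n$; $b_n>d_{n-1}$.
\begin{itemize}
\item The first two cases are clean bijections onto $\text{DSPP}_{000}(n-1)$ and $\text{DSPP}'_{000}(n-1)$, each with numerator $1$.
\item In the third case, subtract $d_n$, $c_n-d_n$, $d_{n-1}-c_n$ and $b_n-d_{n-1}\ge 1$; the forced unit gives $q^{4n-3}$. This is a bijection onto configurations made of rows $1,\dots,n-1$ with $d_{n-1}=0$, plus a cell $a_n\ge 0$.
\item Their generating function is $(1-q^{4n-4})F_{000}(n-1)+F'_{000}(n-1)$. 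The members of $\text{DSPP}_{000}(n-1)$ with $d_{n-1}\ge 1$ have all $4n-4$ entries positive, so they are counted by $q^{4n-4}F_{000}(n-1)$.
\end{itemize}
So $-q^{8n-7}=-q^{4n-3}\cdot q^{4n-4}$ comes from case (b), not from an overlap in case (a).

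\textbf{The primed recurrence.} The same split with removed layer $\{a_n,b_n,c_n,a_{n+1}\}$ proves \eqref{eq:F'000Rec}.
\begin{itemize}
\item In each of the first two cases, your dichotomy $c_n\ge a_{n+1}$ versus $a_{n+1}>c_n$ gives $q^{4n}+q^{4n-1}$.
\item In the third case, $a_{n+1}$ can sit above $d_{n-1}$, between $d_{n-1}$ and $c_n$, or at or below $c_n$. This gives $q^{4n-2}+q^{8n-4}+q^{8n-3}$ times the same factor $(1-q^{4n-4})F_{000}(n-1)+F'_{000}(n-1)$.
\end{itemize}
Expanding yields exactly the stated numerators.

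\textbf{The fallback.} It does not supply a proof. Checking a guessed closed form against a recurrence obtained by uncoupling the very system you are trying to prove is circular. Nothing ties that closed form to $F_{000}(n)$ for general $n$.
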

The proof is similar to Theorem~\ref{thm:F010Rec}, so we skip it. Let
$P_{000}(n):=(q;q)_{4n}F_{000}(n)$, $P'_{000}(n):=(q;q)_{4n}F'_{000}(n)$ and $P^{T}_{000}(n):=(q;q)_{4n}F^{T}_{000}(n)$, then Theorem~\ref{F000Rec} implies the following.
\begin{theorem}\label{thm:RecP000}
For $n\geq1$,
\begin{equation}\label{eq:P000Rec}
P_{000}(n)=(1+q^{4n-3}-q^{8n-7})P_{000}(n-1)+(1+q^{4n-3})P'_{000}(n-1),   
\end{equation}   
\begin{equation}\label{P'000Rec}
\begin{split}
P'_{000}(n)=&((q^{4n}+q^{4n-1})(1+q^{4n-3}-q^{8n-7})+q^{4n-2}(1-q^{4n-4}))P_{000}(n-1)\\
&+((q^{4n}+q^{4n-1})(1+q^{4n-3})+q^{4n-2})P'_{000}(n-1).
\end{split} 
\end{equation}
\end{theorem}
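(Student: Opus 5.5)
The statement is Theorem~\ref{thm:RecP000}. It is obtained from Theorem~\ref{F000Rec} by clearing denominators, so the plan is a bookkeeping argument.

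First, the normalization. Put $P_{000}(n)=(q;q)_{4n}F_{000}(n)$ and $P'_{000}(n)=(q;q)_{4n}F'_{000}(n)$. The key identity is
\[
(q;q)_{4n}=(q;q)_{4n-4}\,(q^{4n-3};q)_4,
\]
valid for every $n\geq1$ since both sides equal $\prod_{i=1}^{4n}(1-q^i)$.

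Second, treat \eqref{eq:F000Rec}. Multiply both sides by $(q;q)_{4n}$. On the left this gives $P_{000}(n)$. On the right, each term has the denominator $(q^{4n-3};q)_4$. Multiplying by $(q;q)_{4n}$ cancels that denominator and leaves the factor $(q;q)_{4n-4}$. This factor combines with $F_{000}(n-1)$ to give $P_{000}(n-1)$, and with $F'_{000}(n-1)$ to give $P'_{000}(n-1)$. The numerators $1+q^{4n-3}-q^{8n-7}$ and $1+q^{4n-3}$ are carried through unchanged, which is exactly \eqref{eq:P000Rec}.

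Third, apply the same step to \eqref{eq:F'000Rec}. Multiplying by $(q;q)_{4n}$ turns $F'_{000}(n)$ into $P'_{000}(n)$, and the two numerator polynomials pass through verbatim. This yields \eqref{P'000Rec}.

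The only point needing care is the base case $n=1$. There we need $P_{000}(0)=F_{000}(0)=1$, since the only DSPP with diagonal length $0$ is the empty one. We also need $P'_{000}(0)=F'_{000}(0)$, and $(q;q)_0=1$ holds by convention, so these are consistent with the definitions.

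There is no substantive obstacle, because Theorem~\ref{F000Rec} may be assumed. All the combinatorial work, namely the case analysis on the relative order of $a_n,b_n,c_n,d_n$ and $d_{n-1}$ in the manner of Theorem~\ref{thm:F010Rec}, lives in that result.

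If one instead wanted to verify Theorem~\ref{F000Rec} independently, the hardest part would be the $F'_{000}$ recurrence, in particular its $-q^{8n-7}$ and $q^{4n-2}(1-q^{4n-4})$ terms. Rather than redo the case analysis, I would first check the recurrences against the initial values $F_{000}(1)$ and $F_{000}(2)$ computed with the Omega package.
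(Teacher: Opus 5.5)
Your proposal is correct and is the same argument as the paper, which obtains Theorem~\ref{thm:RecP000} directly from Theorem~\ref{F000Rec} by multiplying through by $(q;q)_{4n}=(q;q)_{4n-4}(q^{4n-3};q)_4$. Your separate base-case discussion is unnecessary, because this factorization already holds at $n=1$ (with $(q;q)_0=1$), so the normalization never depends on what $F'_{000}(0)$ is.
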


Once again, we can uncouple the recurrences \eqref{eq:P000Rec} and \eqref{P'000Rec} and see the uncoupled recurrences that help us with the automated proofs. For example, we have

\begin{align}
     \nonumber q^{2 + 8 n} (1 + q^{5 + 4 n}) P_{000}(
   n) - (1 + q^{3 + 4 n}) (1 + q^{1 + 4 n} + q^{2 + 4 n} + q^{
    4 + 4 n} &+ q^{5 + 4 n} + q^{6 + 8 n}) P_{000}(n+1)\\ \label{P010_rec_uncoupled}&+ (1 + q^{1 + 4 n}) P_{000}( n+2) = 0.
\end{align}

Looking at the initial terms, we find the following formulas, which we prove using the tools in \cite{qMultiSum}.

\begin{theorem}\label{thm:P000_fermionic_rep}
    For $n\geq 0$, \begin{align}
        \label{P000_fermionic_rep} P_{000}(n) &= \sum_{j,k\geq 0} q^{j^2 + k^2 + 2j} {2 n - k-2\brack j}_{q^2} {j\brack k}_{q^2}.
    \end{align}
\end{theorem}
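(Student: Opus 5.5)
Denote by $S(n)$ the right-hand side of \eqref{P000_fermionic_rep}. The plan follows the proof of Theorem~\ref{thm:P010_fermionic_rep}. Show that $S(n)$ satisfies the same uncoupled second-order recurrence as $P_{000}(n)$, namely the one displayed just before the statement. Then check that the two sequences agree at two consecutive initial values.

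\textbf{Step 1: initial values.} We have $P_{000}(0)=1$ (only the empty DSPP) and $P_{000}(1)=(q;q)_4F_{000}(1)=1$. Also $P_{000}(2)=1+q^3+q^4+q^5+q^8$ from the computed $F_{000}(2)$. On the sum side, $S(0)$ involves ${-k-2\brack j}_{q^2}$, which vanishes except that the empty convention gives $j=k=0$ only when the top is $\ge 0$. This needs care: the case $n=0$ may have to be treated separately or taken as a definition. For $n=1$ only $j=k=0$ contributes, so $S(1)=1$. For $n=2$ the contributing terms are $(j,k)=(0,0),(1,0),(1,1),(2,0),(2,1)$. Their sum is
\[
1+q^3(1+q^2)+q^4+q^8{2\brack 2}_{q^2}+\dots,
\]
and I would verify directly that it equals $1+q^3+q^4+q^5+q^8$. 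I expect the terms with $k=1$ to trim correctly because the top becomes $2-k$.

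\textbf{Step 2: the recurrence for the sum.} Using the creative telescoping routines of \cite{qMultiSum}, I would find a certificate showing that the summand of $S(n)$ satisfies the uncoupled recurrence (with $P_{000}$ replaced by $S$) modulo telescoping differences in $j$ and $k$. Concretely, I would apply the $q$-Zeilberger/Sister Celine approach in two variables. This produces rational functions $R_1,R_2$ in $q^n,q^j,q^k$ such that the recurrence operator applied to the summand equals $\Delta_j(R_1\cdot\text{summand})+\Delta_k(R_2\cdot\text{summand})$. Summing over $j,k$ then makes the right-hand side vanish, because the summand has finite support (the binomials vanish outside $0\le k\le j\le 2n-k-2$). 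A human-checkable alternative is to derive the recurrence from the $q$-Pascal relation for ${2n-k-2\brack j}_{q^2}$ applied twice. This mirrors how the analogous double sum in Theorem~\ref{thm:P010_fermionic_rep} relates to the Berkovich--McCoy--Orrick polynomials: the sum here is the $q^{2j}$-shifted analogue, tied to the second G\"ollnitz--Gordon identity.

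\textbf{Step 3: uniqueness.} The leading coefficient $(1+q^{1+4n})$ of the recurrence never vanishes. So a solution is determined by two consecutive initial values, and agreement at $n=1,2$ gives $S(n)=P_{000}(n)$ for all $n\ge1$. The case $n=0$ is then checked by hand or by convention.

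The main obstacle is Step 2. The certificate may be large, and the boundary terms must be verified to vanish, especially near $k=j$ and the top edge $j=2n-k-2$, where the shift $-2$ in the top of the binomial could produce stray boundary contributions. If the direct telescoping certificate is unwieldy, I would first find a first-order coupled system for $S(n)$ and a companion sum $S'(n)=q^{4n}\sum q^{j^2+k^2+2j}{2n-k-3\brack j}_{q^2}{j\brack k}_{q^2}$, checked against \eqref{eq:P000Rec} and \eqref{P'000Rec}. Each such relation needs only a single Pascal-type identity.
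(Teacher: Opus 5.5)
Your plan is the paper's own proof. The authors also establish \eqref{P000_fermionic_rep} by having \texttt{qMultiSum} certify that the double sum satisfies the uncoupled second-order recurrence for $P_{000}(n)$, and then matching initial values. Seeding the uniqueness argument at $n=1,2$ is the right choice.

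Two corrections, however.

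\textbf{The case $n=0$.} It cannot be ``checked by hand'', because it is false.
\begin{itemize}
\item With the paper's convention (${n\brack m}_q=0$ unless $n\ge m\ge 0$), every term of $S(0)$ contains ${-k-2\brack j}_{q^2}=0$. So $S(0)=0$, whereas $P_{000}(0)=F_{000}(0)=1$.
\item The identity therefore holds only for $n\ge 1$. It holds at $n=0$ only under an ad hoc convention such as ${-2\brack 0}'_{q^2}=1$ from Section~\ref{sec:LinearPartition}.
\item Consistently, $S$ does not satisfy the recurrence at $n=0$. Inserting $S(0)=0$, $S(1)=1$, $S(2)=1+q^3+q^4+q^5+q^8$ leaves the residual $-q^2(1+q^5)$, while $P_{000}(0)=1$ makes it vanish.
\item Hence any telescoping certificate can only be valid for $n\ge 1$. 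You must verify that its boundary terms vanish exactly in that range, not for all $n\ge 0$.
\end{itemize}

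\textbf{The fallback companion sum.} Your $S'(n)$ is not $P'_{000}(n)$. We have $S'(1)=0$, since ${-1-k\brack j}_{q^2}=0$. But the coupled recurrence with $P'_{000}(0)=0$, or a direct count of $\text{DSPP}'_{000}(1)$, gives $P'_{000}(1)=q^3+q^4$. This matches the paper's remark that no short formula for $P'_{000}(n)$ was found. That route would fail as stated, so rely on the uncoupled recurrence directly.

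\textbf{Minor.} At $n=2$ the term $(j,k)=(2,1)$ does not contribute, since ${1\brack 2}_{q^2}=0$. The remaining terms do give $1+q^3+q^4+q^5+q^8$.
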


A formula for $P'_{000}(n)$ can be recovered from \eqref{eq:P000Rec}, but we did not observe a shorter formula as in the previous section.

The $P_{000}(n)$ is the generating function for the number of partitions that satisfy the second G\"ollnitz--Gordon partition theorem, where all parts are bounded by $4n-3$. Hence, in the limit $n\rightarrow\infty$, by \eqref{G2_prod}, we prove \eqref{F000_product}.

Moreover, the following polynomial relation, which yields the second G\"ollnitz--Gordon identity directly, can be recovered from \cite{Berkovich_McCoy_Orrick}.

\begin{theorem} For every non-negative integer $L$,
\begin{equation}
    \label{BMO_GG2_bosonic}
    \sum_{j,k\geq 0} q^{j^2 + k^2+2j} {L - k-1\brack j}_{q^2} {j\brack k}_{q^2} = \sum_{j=-\infty}^\infty (-1)^j q^{4j^2+3j} \left(T_0(L,4j+1,q^2) +  T_0(L,4j+2,q^2) \right).
\end{equation}
\end{theorem}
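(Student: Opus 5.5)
The plan is to treat \eqref{BMO_GG2_bosonic} as an identity between two polynomial sequences in $L$ and prove it by the standard ``same recurrence, same initial values'' method, exactly as was done for \eqref{BMO_GG1_bosonic} and Theorem~\ref{thm:P000_fermionic_rep}. Write $A(L)$ for the left-hand side and $B(L)$ for the right-hand side.

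\textbf{Step 1: a recurrence for $A(L)$.} Theorem~\ref{thm:P000_fermionic_rep} gives $A(2n-1)=P_{000}(n)$, and the uncoupled recurrence for $P_{000}(n)$ displayed just before it only governs odd $L$. I would therefore work in $L$ directly. Using the $q^2$-Pascal relations on ${L-k-1\brack j}_{q^2}$, I will derive a short linear recurrence in $L$ for $A(L)$, most likely of order $2$ or $3$ with polynomial coefficients in $q^{L}$. This is the same Berkovich--McCoy--Orrick type recurrence that the first-identity sum $\sum q^{j^2+k^2}{L-k\brack j}{j\brack k}$ satisfies, up to the linear shift $2j$ and $L\mapsto L-1$. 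The recurrence can be produced and certified automatically with \cite{qMultiSum} via creative telescoping of the double sum.

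\textbf{Step 2: the same recurrence for $B(L)$.} For the bosonic side I will use the known recurrences for the trinomials $T_0(L,m,q^2)$. These come from the $q$-Pascal-type identities for $T_0$ in $L$ with $m$ shifted by $\pm1$, as used by Andrews--Baxter and by Berkovich--McCoy--Orrick. For each $j$, the combination $T_0(L,4j+1,q^2)+T_0(L,4j+2,q^2)$ is then pushed through the Step 1 recurrence. The leftover terms are shown to telescope in $j$ after multiplying by $(-1)^jq^{4j^2+3j}$; in practice this comes from shifting $j\mapsto j+1$ and pairing the terms with $m$ shifted by $4$. Alternatively, creative telescoping can be applied directly to the single-sum-in-$r$ definition of $T_0$, with $j$ a summation variable, which again reduces to a certificate check in \cite{qMultiSum}.

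\textbf{Step 3: initial conditions.} I will verify $A(L)=B(L)$ for the first few $L$ (e.g.\ $L=0,1,2$), which is a finite computation since only finitely many $j$ contribute. Induction then finishes the proof. As a cross-check, the limit $L\to\infty$ should reproduce \eqref{G2_prod} through the Jacobi Triple Product, with the extra $q^{3j}$ and the shift of $m$ by one relative to \eqref{BMO_GG1_bosonic}.

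The main obstacle is Step 2: making the $T_0$ recurrences align with the recurrence for $A(L)$ so that the boundary terms genuinely cancel in the bilateral sum. If a hand argument gets messy, the fallback is to cite \cite{Berkovich_McCoy_Orrick}. There the identity is the case of their infinite hierarchy obtained by shifting the parameter that produces \eqref{BMO_GG1_bosonic}, so it suffices to identify the specialization: the gap/boundary parameters corresponding to parts $\ge 3$. One then checks that their fermionic side matches ours after $L\mapsto L-1$.
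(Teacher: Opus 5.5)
Your strategy (prove a recurrence in $L$ for both sides and match initial values, certified by creative telescoping) is a reasonable alternative to the paper. The paper gives no argument: it only says the identity can be recovered from Berkovich--McCoy--Orrick, which is your fallback.

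\textbf{The statement is false as printed, so Step~3 fails.} By the paper's definition, $T_0(L,A,q^2)=\sum_r(-1)^r{L\brack r}_{q^4}{2L-2r\brack L-A-r}_{q^2}$. This gives $T_0(2,1,q^2)=q^2+q^6$ and $T_0(2,\pm2,q^2)=1$. So at $L=2$ the right-hand side is $1-q+q^2+q^6$, while the left-hand side is $1+q^3$. The same slip already breaks \eqref{BMO_GG1_bosonic} at $L=1$, where it gives $1+q^2$ instead of $1+q$.

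The identity does hold if the trinomial is read in the Andrews--Baxter normalization $T_0(L,A)=\sum_r(-1)^r{L\brack r}_{q^2}{2L-2r\brack L-A-r}_q$. Then $T_0(2,1)=q+q^3$, so $L=2$ gives $1+q^3$, and $L=3$ gives $1+q^3+q^4+q^5+q^8=P_{000}(2)$. Only this reading makes your Jacobi triple product cross-check work, since then $T_0(L,A)+T_0(L,A+1)\to(-q;q^2)_\infty/(q^2;q^2)_\infty$. You must fix this normalization before any matching of recurrences and initial values can succeed.

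\textbf{Step~1 is simpler than you expect.} $A(L)$ satisfies $A(L+1)=(1+q^{2L+1})A(L)+q^{2L}A(L-1)$ with $A(0)=0$ and $A(1)=1$. This is the largest-part recursion for $G_2$-partitions with parts $\le 2L-1$. The first G\"ollnitz--Gordon double sum satisfies the same recurrence with initial values $1$ and $1+q$, so no shift $L\mapsto L-1$ is involved.

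\textbf{Step~2 carries all the content, and your hand sketch does not close as stated.} The $L$-recurrence for $T_0$ brings in the companion trinomial $T_1(L,A)=\sum_r(-q)^r{L\brack r}_{q^2}{2L-2r\brack L-A-r}_q$. For example, $T_0(L,A)=T_0(L-1,A-1)+q^{L+A}T_1(L-1,A)+q^{2L+2A}T_0(L-1,A+1)$. So pushing $T_0(L,4j+1)+T_0(L,4j+2)$ through the recurrence and telescoping in $j$ forces a coupled system with a $T_1$-type bosonic companion. That is essentially the Berkovich--McCoy--Orrick proof.

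The workable form of your plan is the automated one:
\begin{enumerate}
\item Write $(q^2;q^2)_n=(q;q)_n(-q;q)_n$, so the summand in $(j,r)$ is $q$-hypergeometric in a single base.
\item Let creative telescoping produce a recurrence for $B(L)$.
\item Check that this recurrence also annihilates $A(L)$.
\item Compare as many initial values as its order.
\end{enumerate}
This yields a self-contained, machine-certified proof, whereas the paper's route rests entirely on the citation.
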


 We can, once again, utilize the summation formulas \cite[Theorem~3.5]{BU_elementary} and \cite[Theorem 2.1]{BerkUncu} by Berkovich and the second author.

\begin{theorem}\label{thm_baileyd000} For a non-negative integers $L$ and $p$
\begin{align}\nonumber
    \nonumber\sum_{\substack{i,j,k\geq 0\\m_p\geq \dots\geq m_{1}\geq 0}} &\frac{q^{2m_p^2+2m_{p-1}^2+\dots +2m_1^2+i^2+j^2 + k^2+2j} (q^2;q^2)_{2L}}{(q^2;q^2)_{L-m_p}(q^2;q^2)_{m_p - m_{p-1}}\dots (q^2;q^2)_{m_2-m_1}(q^2;q^2)_{2m_1}} {m_1 \brack i}_{q^2} { i - k-1\brack j}_{q^2} {j\brack k}_{q^2} \\ \label{eq_double_baileyd_P000}&= q^{1+2p} \sum_{j=-\infty}^\infty \left((-1)^j q^{(20+32p)j^2+(11+16p)j} {2L\brack L-4j-1}_{q^2} \right.\\ \nonumber &\hspace{4cm} \left.
    +(-1)^j q^{(20+32p)j^2+(19+32p)j+3+6p}{2L\brack L-4j-2}_{q^2} 
    \right).
\end{align}
\end{theorem}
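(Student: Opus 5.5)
The plan parallels the derivation of \eqref{eq_double_baileyd_P010} in Section~\ref{sec:010}, with the seed \eqref{BMO_GG1_bosonic} replaced by \eqref{BMO_GG2_bosonic}. There are three stages: a seed identity at $p=0$, an iteration step producing the $m$-chain, and a parameter bookkeeping check.

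\textbf{Step 1 (the $p=0$ seed).} The case $p=0$ should read
\[
\sum_{i,j,k\ge0} q^{i^2+j^2+k^2+2j}{L\brack i}_{q^2}{i-k-1\brack j}_{q^2}{j\brack k}_{q^2}
= q\sum_{j}(-1)^j\Bigl(q^{20j^2+11j}{2L\brack L-4j-1}_{q^2}+q^{20j^2+19j+3}{2L\brack L-4j-2}_{q^2}\Bigr).
\]
Indeed, when $p=0$ the chain of $m$'s is empty, $m_1$ plays the role of $L$, and $(q^2;q^2)_{2L}/(q^2;q^2)_{2L}=1$.

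To obtain this, start from \eqref{BMO_GG2_bosonic} with $L\mapsto i$. Multiply both sides by $q^{i^2}{L\brack i}_{q^2}$ and sum over $i$. The left side becomes exactly the desired triple sum. On the right side, each term has the shape $q^{a j^2+bj}\sum_i q^{i^2}{L\brack i}_{q^2}T_0(i,s,q^2)$. This is the configuration covered by \cite[Theorem~3.5]{BU_elementary}, which evaluates
\[
\sum_i q^{i^2}{L\brack i}_{q^2}T_0(i,s,q^2)
\]
as a multiple of $q^{s^2}{2L\brack L-s}_{q^2}$. Here the exponent is in base $q$, since $T_0(\cdot,\cdot,q^2)$ carries a $q^2$-binomial inside.

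The substitution then works as follows. Taking $s=4j+1$ and using $q^{(4j+1)^2}=q^{16j^2+8j+1}$, the first term becomes $q^{4j^2+3j}\cdot q^{16j^2+8j+1}=q^{20j^2+11j+1}$. Taking $s=4j+2$ gives $q^{4j^2+3j}\cdot q^{16j^2+16j+4}=q^{20j^2+19j+4}$. Both match the seed, with the overall factor $q$ pulled out. This is the exact analogue of how Theorem~\ref{thm_BaileyP010} was obtained from \eqref{BMO_GG1_bosonic}, where the shifts $4j$ and $4j+1$ produced $20j^2+j$ and $20j^2+9j+1$.

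\textbf{Step 2 (iteration).} Next apply \cite[Theorem 2.1]{BerkUncu} $p$ times. In the form used for \eqref{eq_double_baileyd_P010}, this lemma takes an identity
\[
\sum_{m} f(m)\,\frac{(q^2;q^2)_{2L}}{(q^2;q^2)_{L-m}(q^2;q^2)_{2m}}\cdots = \sum_j \alpha_j {2L\brack L-s_j}_{q^2}
\]
and returns one with an extra summation variable carrying $q^{2m^2}/(q^2;q^2)_{L-m}$ on the left. On the right, each ${2L\brack L-s}_{q^2}$ is replaced by $q^{2s^2}{2L\brack L-s}_{q^2}$, which is the Bailey-pair step $\beta\mapsto\beta'$ with $a=1$ in base $q^2$.

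Before iterating, first put the seed in that form. The normalization ${L\brack m_1}_{q^2}=(q^2;q^2)_{2L}\big/\bigl((q^2;q^2)_{L-m_1}(q^2;q^2)_{2m_1}\bigr)\cdot(q^2;q^2)_{2m_1}/\bigl((q^2;q^2)_{m_1}(q^2;q^2)_{L+m_1}\bigr)$ needs care. I would follow verbatim how \eqref{eq_double_baileyd_P010} was derived from \eqref{eq_BaileydP010}, presumably by reading the seed as a Bailey pair relative to $(q^2;q^2)_{2m_1}$. Since the left-hand structure of the present identity is identical to that case except for the inner sum, the same bookkeeping applies.

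\textbf{Step 3 (exponent check).} The $p$ iterations multiply the term with $s=4j+1$ by $q^{2p(4j+1)^2}=q^{32pj^2+16pj+2p}$. This gives $q^{1+2p}q^{(20+32p)j^2+(11+16p)j}$, as claimed. The term with $s=4j+2$ picks up $q^{2p(16j^2+16j+4)}$, giving $(20+32p)j^2+(19+32p)j+3+6p$ after extracting $q^{1+2p}$.

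\textbf{Main obstacle.} The step I expect to be hardest is Step 1, specifically confirming that \cite[Theorem~3.5]{BU_elementary} applies for the shifted indices $4j+1,4j+2$ with exactly the normalization above, including any sign or $q$-power conventions for $T_0$. As a fallback, since both sides are polynomial in $L$-indexed families, I would verify the $p=0$ identity directly with \cite{qMultiSum} by finding a common recurrence in $L$ and checking initial values.
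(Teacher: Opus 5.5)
Your proposal is correct and follows the paper's route: seed \eqref{BMO_GG2_bosonic} summed against $q^{i^2}{L\brack i}_{q^2}$ via \cite[Theorem~3.5]{BU_elementary}, then $p$ applications of \cite[Theorem~2.1]{BerkUncu}, and your exponent bookkeeping is right. That lemma acts directly on any $F(L)=\sum_s\alpha_s{2L\brack L-s}_{q^2}$ by $\alpha_s\mapsto q^{2s^2}\alpha_s$, so no renormalization of ${L\brack m_1}_{q^2}$ is needed (your displayed factorization of it is not correct anyway). On the $T_0$ convention you flag: with the paper's definition of $T_0$, the evaluation is $\sum_i q^{i^2}{L\brack i}_{q^2}T_0(i,s,q)=q^{s^2}{2L\brack L-s}_{q^2}$, and \eqref{BMO_GG2_bosonic} likewise holds only with third argument $q$ rather than $q^2$ (e.g.\ at $L=2$), so the two $q^2$'s are a consistent slip and your seed is unaffected.
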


As $L\rightarrow\infty$, Theorem~\ref{thm_baileyd000} implies \eqref{eq_double_baileyd_P010lim}.

We can once again look at the $q\mapsto 1/q$ image and grow an infinite hierarchy from that point. The analogous identity to Theorem~\ref{thm_BaileyP010recip} is the following.

\begin{theorem}\label{thm_BaileyP000recip} For a non-negative integer $L$, we have
\begin{align}\nonumber
    q^{2L^2}\sum_{i,j,k\geq 0} &q^{i^2 - 2 i j + j^2 + k^2 - 2 i L} {L \brack i}_{q^2} { i - k-1\brack j}_{q^2} {j\brack k}_{q^2} \\\label{eq_BaileydP010recip}&= q\sum_{j=-\infty}^\infty \left((-1)^j q^{12j^2+5j} {2L\brack L-4j-1}_{q^2} 
    +(-1)^j q^{12 j^2+13j+3}{2L\brack L-4j-2}_{q^2} 
    \right).
\end{align}
\end{theorem}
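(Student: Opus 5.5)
The plan is to obtain this identity as the $q\mapsto 1/q$ image of the seed ($p=0$) identity behind Theorem~\ref{thm_baileyd000}, just as Theorem~\ref{thm_BaileyP010recip} was obtained from Theorem~\ref{thm_BaileyP010}. The seed identity I have in mind is, for every non-negative integer $L$,
\begin{align*}
\sum_{i,j,k\geq 0} q^{i^2+j^2+k^2+2j}{L\brack i}_{q^2}{i-k-1\brack j}_{q^2}{j\brack k}_{q^2}
= q\sum_{j=-\infty}^{\infty}(-1)^j\Big(q^{20j^2+11j}{2L\brack L-4j-1}_{q^2}+q^{20j^2+19j+3}{2L\brack L-4j-2}_{q^2}\Big).
\end{align*}
This is the $p=0$ instance of \eqref{eq_double_baileyd_P000}, read with the convention that for $p=0$ the $m$-chain is absent and the outer layer is simply ${L\brack i}_{q^2}$. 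If one prefers not to rely on that convention, it can be derived directly, as Theorem~\ref{thm_BaileyP010} was. One applies \cite[Theorem~3.5]{BU_elementary} to the Berkovich--McCoy--Orrick identity \eqref{BMO_GG2_bosonic}. This turns the $T_0(L,4j+1,q^2)$ and $T_0(L,4j+2,q^2)$ terms into the $q^2$-binomials ${2L\brack L-4j-1}_{q^2}$ and ${2L\brack L-4j-2}_{q^2}$, and it wraps the left side in $\sum_i q^{i^2}{L\brack i}_{q^2}$.

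Next, replace $q$ by $q^{-1}$ throughout, using ${n\brack m}_{q^{-2}}=q^{-2m(n-m)}{n\brack m}_{q^2}$. This rule holds for all integers $n,m$ because both sides vanish outside $0\le m\le n$, so the entries $i-k-1$ that become negative cause no trouble. On the left, the exponent becomes
\[
-(i^2+j^2+k^2+2j)-2i(L-i)-2j(i-k-1-j)-2k(j-k)=i^2-2ij+j^2+k^2-2iL,
\]
where the $\pm 2j$ and $\pm 2jk$ terms cancel.

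On the right, the binomial ${2L\brack L-4j-1}$ contributes $-2(L-4j-1)(L+4j+1)=-2L^2+2(4j+1)^2$. Together with the prefactor, the first exponent becomes $-1-20j^2-11j-2L^2+2(4j+1)^2=-2L^2+1+12j^2+5j$. Similarly, ${2L\brack L-4j-2}$ gives $-2L^2+2(4j+2)^2$, so the second exponent becomes $-1-20j^2-19j-3-2L^2+2(4j+2)^2=-2L^2+1+12j^2+13j+3$.

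Multiplying both sides by $q^{2L^2}$ then gives exactly the claimed identity, with the common factor $q$ pulled out on the right. The main obstacle is not this arithmetic but establishing the seed identity itself. The key point there is checking that \cite[Theorem~3.5]{BU_elementary} applies with the index shifts $4j+1$ and $4j+2$, and that it produces the stated quadratic forms $20j^2+11j$ and $20j^2+19j+3$. This amounts to completing the square in the trinomial-to-binomial conversion. If that conversion is awkward, a fallback is available. One verifies the seed identity by showing that both sides satisfy the same recurrence in $L$, found with \cite{qMultiSum}, and agree on initial values; the $q\mapsto 1/q$ step above is then purely formal.
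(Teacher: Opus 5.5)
Your proposal is correct and follows the paper's route. The paper obtains this identity as the $q\mapsto 1/q$ image of the $p=0$ case of Theorem~\ref{thm_baileyd000}, which itself comes from applying \cite[Theorem~3.5]{BU_elementary} to \eqref{BMO_GG2_bosonic}, in exact parallel with Theorem~\ref{thm_BaileyP010recip}. Your exponent bookkeeping on both sides checks out, including the remark that ${n\brack m}_{q^{-2}}=q^{-2m(n-m)}{n\brack m}_{q^2}$ is harmless when the binomial vanishes, and it supplies details the paper leaves implicit.
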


\begin{theorem}
    For non-negative integers $L$ and $p$
\begin{align}\nonumber
    \nonumber\sum_{\substack{i,j,k\geq 0\\m_p\geq \dots\geq m_{1}\geq 0}} &\frac{q^{2m_p^2+2m_{p-1}^2+\dots +2m_2^2+4m_1^2-2im_1+i^2-2ij+j^2 + k^2} (q^2;q^2)_{2L}}{(q^2;q^2)_{L-m_p}(q^2;q^2)_{m_p - m_{p-1}}\dots (q^2;q^2)_{m_2-m_1}(q^2;q^2)_{2m_1}} {m_1 \brack i}_{q^2} { i - k-1\brack j}_{q^2} {j\brack k}_{q^2} \\ \label{eq_double_baileyd_P010_dual}&=q^{1+2p} \sum_{j=-\infty}^\infty \left((-1)^j q^{(12+32p)j^2+(5+16p)j} {2L\brack L-4j}_{q^2} \right.\\ \nonumber&\hspace{5cm}\left.
    +(-1)^j q^{(12+32p)j^2+(13+32p)j+3+6p}{2L\brack L-4j-1}_{q^2} 
    \right).
\end{align}
\end{theorem}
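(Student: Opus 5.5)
The plan is to derive the hierarchy from the seed identity of Theorem~\ref{thm_BaileyP000recip} by iterating the Bailey-lemma special case \cite[Theorem 2.1]{BerkUncu}. This mirrors how \eqref{eq_double_baileyd_P010_dual} was obtained from Theorem~\ref{thm_BaileyP010recip}, and how Theorem~\ref{thm_baileyd000} was obtained from the $q\mapsto1/q$-free seed.

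\textbf{Step 1: rewrite the seed as a Bailey-type identity.} Set
\[
A_L=q^{2L^2}\sum_{i,j,k\ge0}q^{i^2-2ij+j^2+k^2-2iL}{L\brack i}_{q^2}{i-k-1\brack j}_{q^2}{j\brack k}_{q^2}.
\]
Theorem~\ref{thm_BaileyP000recip} then reads
\[
A_L=\sum_{r}c_r\,{2L\brack L-r}_{q^2},
\]
where the coefficients $c_r$ vanish unless $r\equiv 1,2\pmod 4$, and
\[
c_{4j+1}=(-1)^jq^{12j^2+5j+1},\qquad c_{4j+2}=(-1)^jq^{12j^2+13j+4}.
\]
The summation formula \cite[Theorem 2.1]{BerkUncu} converts any identity of the form $\sum_m \frac{(q^2;q^2)_{2L}}{(q^2;q^2)_{L-m}(q^2;q^2)_{2m}}(\text{stuff})_m=\sum_r c_r{2L\brack L-r}_{q^2}$ into one of the same shape. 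The new left side is
\[
\sum_m \frac{q^{2m^2}(q^2;q^2)_{2L}}{(q^2;q^2)_{L-m}(q^2;q^2)_{m-m'}}(\ldots),
\]
and on the right $c_r$ is replaced by $q^{2r^2}c_r$ (one may also use the shifted version with a linear term; I expect the pure $r^2$ form).

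\textbf{Step 2: first application.} To apply the lemma once, I first rewrite $A_L$ itself as $\frac{(q^2;q^2)_{2L}}{(q^2;q^2)_{2L}}A_L$, treating it as the $m_1=L$ term of the Bailey pair. Feeding the seed through once produces the $p=1$ case: the quadratic $2L^2-2iL$ becomes $4m_1^2-2im_1$ after renaming $L\to m_1$. The extra $2m_1^2$ comes from the Bailey factor $q^{2m_1^2}$ together with the existing $q^{2m_1^2}$, and the denominators are $(q^2;q^2)_{L-m_1}(q^2;q^2)_{2m_1}$. On the right, $r=4j+1$ gives
\[
2r^2=32j^2+16j+2,
\]
so the exponent becomes $(12+32)j^2+(5+16)j+1+2$, matching the claim at $p=1$. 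Similarly $r=4j+2$ gives $2r^2=32j^2+32j+8$. Combined with $12j^2+13j+4$, this has to be matched against the claimed $(12+32p)j^2+(13+32p)j+3+6p$ together with the prefactor $q^{1+2p}$.

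\textbf{Step 3: induction on $p$.} Each further application adds $2m_{p+1}^2$ on the left and $32j^2+16j+2$, respectively $32j^2+32j+8$, on the right. This yields the linear growth in $p$ of the quadratic coefficient $32p$, of the linear coefficients $16p$ and $32p$, and of the constants $2p$ and $6p$. At the same time the binomial indices $L-4j-1$ and $L-4j-2$ must be kept consistent with the statement's displayed indices.

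\textbf{Main obstacle.} The main obstacle is the bookkeeping of the shifts. The statement displays ${2L\brack L-4j}$ and ${2L\brack L-4j-1}$ with linear terms $(5+16p)j$ and $(13+32p)j$, whereas the seed has $L-4j-1$ and $L-4j-2$. So the quadratic forms must be reconciled, most likely by a substitution such as $j\mapsto -j-1$ in one sum using the symmetry ${2L\brack L-r}={2L\brack L+r}$, or by using the version of \cite[Theorem 2.1]{BerkUncu} whose weight is $q^{2r^2+2r}$ or similar. I would first recompute the exponents under that symmetry for $p=0$ to fix conventions, and then check $p=1$ numerically before trusting the induction.
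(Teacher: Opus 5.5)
Your route is the paper's: apply \cite[Theorem 2.1]{BerkUncu} $p$ times to the seed Theorem~\ref{thm_BaileyP000recip}. You use it in its unshifted base-$q^2$ form: if $F(m)=\sum_r c_r{2m\brack m-r}_{q^2}$, then $\sum_{m}\frac{q^{2m^2}(q^2;q^2)_{2L}}{(q^2;q^2)_{L-m}(q^2;q^2)_{2m}}F(m)=\sum_r q^{2r^2}c_r{2L\brack L-r}_{q^2}$. Your exponent arithmetic is correct and already finishes the derivation. After $p$ steps the coefficient of ${2L\brack L-4j-1}_{q^2}$ is $q^{1+2p}(-1)^jq^{(12+32p)j^2+(5+16p)j}$, and that of ${2L\brack L-4j-2}_{q^2}$ is $q^{1+2p}(-1)^jq^{(12+32p)j^2+(13+32p)j+3+6p}$. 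On the left, the factors $(q^2;q^2)_{2m_s}$ cancel between consecutive levels, giving exactly the displayed left side. The term $4m_1^2-2im_1$ arises as $q^{2m_1^2}\cdot q^{2m_1^2-2im_1}$. There is no need to view $A_L$ as an $m_1=L$ term; the lemma applies directly with $F(m)=A_m$.

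The gap is your ``main obstacle'': it is not a bookkeeping issue, and neither proposed fix can work. The symmetry ${2L\brack L-r}_{q^2}={2L\brack L+r}_{q^2}$ sends the residue classes $r\equiv 1,2\pmod 4$ to $r\equiv 3,2\pmod 4$, never to $r\equiv 0$. A shifted variant of the lemma would also change the left-hand side, whereas the statement's left side has exactly the unshifted structure ($q^{2m^2}$, $(q^2;q^2)_{2L}$, $(q^2;q^2)_{2m_1}$). In fact the identity with binomials printed as ${2L\brack L-4j}_{q^2}$ and ${2L\brack L-4j-1}_{q^2}$ is false. Take $L=0$ and any $p\ge 1$: all $m_s=0$, so $i=0$ and ${-k-1\brack j}_{q^2}=0$, and the left side vanishes, while the displayed right side equals $q^{1+2p}$ from ${0\brack 0}_{q^2}$. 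At $p=1$, $L=1$ the left side is $q^3$, but the displayed right side is $q^3(1+q^2+q^9)$. Your planned numerical check would therefore refute the target rather than confirm a reindexing. The correct conclusion is that the displayed indices are a misprint for $L-4j-1$ and $L-4j-2$, as in the seed and in Theorem~\ref{thm_baileyd000}. With that reading your Step~3 is a complete proof. The limit \eqref{eq_double_baileyd_P000_dual_lim} is unaffected, since ${2L\brack L-r}_{q^2}\to 1/(q^2;q^2)_\infty$ for each fixed $r$.
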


The identity \eqref{eq_double_baileyd_P000_dual_lim} can easily be seen by taking the $L\rightarrow\infty$ limit of \eqref{eq_double_baileyd_P010_dual} and elementary manipulations. The $p=0$ case's limit can also be extracted from \cite{AlladiGG}. The left-hand side of the $p=0$ case's limit is the same as the limit of the left-hand side of \eqref{BMO_GG2_bosonic} with an extra $q$. We write this limit and the dissection next.

\begin{corollary}
   \[ \frac{1}{(q^3,q^4,q^5;q^8)} = \frac{(q^{24};q^{24})_\infty}{(q^2;q^2)_\infty}\left( (q^{7}, q^{17};q^{24})_\infty - q^{2}(q, q^{23};q^{24})_\infty
    \right).\]
\end{corollary}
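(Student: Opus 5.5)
The plan is to take the $p=0$ case, i.e.\ Theorem~\ref{thm_BaileyP000recip}, and let $L\rightarrow\infty$ on both sides. This parallels the derivation of Corollary~\ref{cor_GG_dissec} from Theorem~\ref{thm_BaileyP010recip}.

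\textbf{Right-hand side.} For fixed $j$, ${2L\brack L-4j-1}_{q^2}$ and ${2L\brack L-4j-2}_{q^2}$ both tend to $1/(q^2;q^2)_\infty$ coefficientwise. Since the quadratic exponents $12j^2$ dominate, the limit may be taken term by term. What remains is
\[
\frac{q}{(q^2;q^2)_\infty}\Big(\sum_{j}(-1)^jq^{12j^2+5j}+q^3\sum_j(-1)^jq^{12j^2+13j}\Big).
\]
The Jacobi Triple Product with base $q^{24}$ gives
\[
\sum_j(-1)^jq^{12j^2+5j}=(q^7,q^{17},q^{24};q^{24})_\infty,\qquad
\sum_j(-1)^jq^{12j^2+13j}=(q^{-1},q^{25},q^{24};q^{24})_\infty.
\]
Pulling the factor $(1-q^{-1})$ out of $(q^{-1};q^{24})_\infty$ and writing $(q^{25};q^{24})_\infty=(q;q^{24})_\infty/(1-q)$, the second product becomes $-q^{-1}(q,q^{23},q^{24};q^{24})_\infty$. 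Hence the right-hand side tends to
\[
q\,\frac{(q^{24};q^{24})_\infty}{(q^2;q^2)_\infty}\Big((q^7,q^{17};q^{24})_\infty-q^2(q,q^{23};q^{24})_\infty\Big),
\]
which is the claimed expression up to the overall factor $q$.

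\textbf{Left-hand side.} As in the argument before Corollary~\ref{cor_GG_dissec}, substitute $m=i-j$. The exponent then becomes $2L(L-i)+m^2+k^2$, so as $L\to\infty$ only the terms with $i=L$ survive. Those terms reduce to
\[
\sum_{j,k}q^{(L-j)^2+k^2}{L-k-1\brack j}_{q^2}{j\brack k}_{q^2}.
\]
I will identify the limit of this sum with $q$ times the limit of the left-hand side of \eqref{BMO_GG2_bosonic}. The cleanest route is to observe that Theorem~\ref{thm_BaileyP000recip} is the $q\mapsto1/q$ image of the $p=0$ summation. Tracking the normalisation $q^{2L^2}$ together with the symmetry ${n\brack k}_{q^{-2}}=q^{-2k(n-k)}{n\brack k}_{q^2}$ then shows that the surviving terms reproduce the fermionic sum of \eqref{BMO_GG2_bosonic} with an extra factor $q$. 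By Theorem~\ref{thm:P000_fermionic_rep} and \eqref{G2_prod}, that limit is the second G\"ollnitz--Gordon product $1/(q^3,q^4,q^5;q^8)_\infty$. Cancelling the common factor $q$ then yields the corollary.

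\textbf{Expected obstacle.} The delicate step is this left-hand-side identification. One must justify that terms with $i<L$ vanish uniformly, and check that the $q^{-2k(n-k)}$ factors from the $q$-binomials exactly cancel the cross terms $-2ij$. I also expect to pin down whether the denominator on the right should be $(q^2;q^2)_\infty$ or $(q;q)_\infty$. To settle that normalisation I would first compare a few coefficients with a computer expansion before writing the bookkeeping in full.
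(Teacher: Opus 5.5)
This is the paper's argument: let $L\to\infty$ in Theorem~\ref{thm_BaileyP000recip}, keep only the $i=L$ terms on the left, and sum the right side with the Jacobi triple product exactly as you did; your computation already fixes the denominator as $(q^2;q^2)_\infty$, as stated (the coefficient of $q$ rules out $(q;q)_\infty$, and the $(q;q)_\infty$ in Corollary~\ref{cor_GG_dissec} is a misprint). The one step to make precise is the factor $q$ on the left, which $q$-binomial inversion by itself does not produce: either use the reflection $j\mapsto L-1-j$ (the palindromicity argument of Theorem~\ref{thm:Pc Palindromic}), or take the limit directly, since with $m=L-j$ the surviving sum tends to $\sum_{m>k\geq0}q^{m^2+k^2}/\big((q^2;q^2)_{m-k-1}(q^2;q^2)_k\big)$ and the shift $m=j+1$ turns this into $q\sum_{j\geq0}q^{j^2+2j}(-q;q^2)_j/(q^2;q^2)_j=q/(q^3,q^4,q^5;q^8)_\infty$.
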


\section{Profile $001$}\label{sec:001}

The diagram of a partition in $\text{DSPP}_{001}$ with at most $4n$ parts can be of the following shapes.
\begin{center}
\begin{ytableau}
\none & \none & d_1 \\
a_1 & b_1 & c_1 & d_2\\
\none & a_2 & b_2 & c_2 & \none[\ddots]\\
\none & \none &
\none[\ddots] & \none[\ddots] & \none[\ddots] & d_{n}\\
\none & \none & \none &  a_{n} & b_{n} & c_n
\end{ytableau}
\hspace{2cm}
\begin{ytableau}
\none & \none & d_1 \\
a_1 & b_1 & c_1 & d_2\\
\none & a_2 & b_2 & c_2 & \none[\ddots]\\
\none & \none &
\none[\ddots] & \none[\ddots] & \none[\ddots] & d_{n}\\
\none & \none & \none &  a_{n} & b_{n} & 0\\
\none & \none & \none & \none & a_{n+1}
\end{ytableau}
\end{center}
Let $\text{DSPP}_{001}(n)$ be the set of double shifted partitions of profile ${001}$ with diagonal length bounded by $n$, that is, $a_{n+1}=b_{n+1}=c_{n+1}=d_{n+1}=0$. And let $\text{DSPP}'_{001}(n)$ be the set of double shifted partitions with profile ${001}$ such that $a_{n+1}>0$ and $b_{n+1}=c_{n}=d_{n+1}=0$. Finally, let $\text{DSPP}^{T}_{001}(n)$ be the set of double shifted partitions with profile ${001}$ that have at most $4n$ parts. Then $\text{DSPP}_{001}(n)\cap\text{DSPP}'_{001}(n)=\emptyset$ and $\text{DSPP}^{T}_{001}(n)=\text{DSPP}_{001}(n)\cup\text{DSPP}'_{001}(n)$. Let 
$$F_{001}(n):=\sum_{\Lambda\in\text{DSPP}_{001}(n)}q^{|\Lambda|},\quad F'_{001}(n):=\sum_{\Lambda\in\text{DSPP}'_{001}(n)}q^{|\Lambda|}\quad\text{and}\quad F^{T}_{001}(n):=\sum_{\Lambda\in\text{DSPP}^{T}_{001}(n)}q^{|\Lambda|}$$
be their generating functions, respectively.

\begin{theorem}
The crude form of $F_{001}(n)$ is
\begin{equation}
\begin{split}
F_{001}(n)=&\underset{\geq}
{\Omega}\frac{1}{(1-q\lambda_{2,1}\mu_{2,1})(1-q\lambda_{1,3}\mu_{1,3})}\left(1-\frac{q\lambda_{2,2}\mu_{2,2}}{\lambda_{2,1}}\right)^{-1}\prod_{n=1}^{\infty}\left(1-\frac{q\lambda_{i+1,i+2}\mu_{i+1,1+2}}{\lambda_{i+1,i+1}\mu_{i,i+2}}\right)^{-1}\\
&\times\prod_{i=2}^{n}\left(1-\frac{q\lambda_{i+1,i}\mu_{i+1,i}}{\mu_{i,i}}\right)^{-1}\left(1-\frac{q\lambda_{i+1,i+1}\mu_{i+1,i+1}}{\lambda_{i+1,i}\mu_{i,i+1}}\right)^{-1}\left(1-\frac{q\lambda_{i,i+2}\mu_{i,i+2}}{\lambda_{i,i+1}}\right)^{-1}.
\end{split}
\end{equation}
\end{theorem}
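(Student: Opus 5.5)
The plan is to repeat the derivation of the crude form of $F_{010}(n)$ from Section~\ref{sec:PA}, now for the profile $001$ diagram. We encode every weakly-decreasing condition along rows and columns as the exponent of an auxiliary variable, then sum the resulting free geometric series.

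\textbf{Step 1: list the inequalities.} From the first diagram of this section, with the convention $a_i=b_i=c_i=d_i=0$ for $i>n$ (and $d_{n+1}=0$), the shape forces the following conditions.
\begin{itemize}
\item Rows weakly decreasing: $a_i-b_i\geq 0$, $b_i-c_i\geq 0$, $c_i-d_{i+1}\geq 0$, together with $d_1\geq 0$.
\item Columns weakly decreasing: $d_i-c_i\geq 0$ (since $d_i$ sits directly above $c_i$), $c_i-b_{i+1}\geq 0$, $b_i-a_{i+1}\geq 0$, together with $a_1\geq 0$.
\end{itemize}
Each row inequality is attached to a $\lambda$ indexed by the cell at which the inequality ``ends'', and each column inequality to a corresponding $\mu$, matching the indexing of the profile $010$ case. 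Thus $a_i\mapsto(i+1,i)$, $b_i\mapsto(i+1,i+1)$, $c_i\mapsto(i+1,i+2)$ and $d_i\mapsto(i,i+2)$. Then
\[
F_{001}(n)=\underset{\geq}{\Omega}\sum_{a_i,b_i,c_i,d_i\geq 0}q^{\sum_i(a_i+b_i+c_i+d_i)}\prod_{\text{ineq.}}\lambda^{(\cdot)}\mu^{(\cdot)}.
\]
This is legitimate because $\Omega_{\geq}$ keeps exactly the terms with all exponents non-negative, i.e.\ exactly the fillings satisfying all the inequalities.

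\textbf{Step 2: regroup by summation variable.} Collect all powers of each single variable $a_i,b_i,c_i,d_i$. Each variable appears positively in its own $\lambda$ and $\mu$, and negatively in the $\lambda$ of its left neighbour in the row and in the $\mu$ of the cell above it.
\begin{itemize}
\item $a_1$ and $d_1$ have no left or upper neighbours, which gives the factors $(1-q\lambda_{2,1}\mu_{2,1})^{-1}$ and $(1-q\lambda_{1,3}\mu_{1,3})^{-1}$.
\item $b_1$ has only a left neighbour, giving $(1-q\lambda_{2,2}\mu_{2,2}/\lambda_{2,1})^{-1}$.
\item Each $c_i$ has left neighbour $b_i$ and upper neighbour $d_i$, giving $(1-q\lambda_{i+1,i+2}\mu_{i+1,i+2}/(\lambda_{i+1,i+1}\mu_{i,i+2}))^{-1}$ for $1\le i\le n$.
\item For $i\geq 2$: $a_i$ is bounded by $b_{i-1}$ above, giving $(1-q\lambda_{i+1,i}\mu_{i+1,i}/\mu_{i,i})^{-1}$.
\item For $i\geq 2$: $b_i$ is bounded by $a_i$ on the left and $c_{i-1}$ above, giving $(1-q\lambda_{i+1,i+1}\mu_{i+1,i+1}/(\lambda_{i+1,i}\mu_{i,i+1}))^{-1}$.
\item For $i\geq 2$: $d_i$ is bounded by $c_{i-1}$ on the left, giving $(1-q\lambda_{i,i+2}\mu_{i,i+2}/\lambda_{i,i+1})^{-1}$.
\end{itemize}
Summing each geometric series then yields the stated product. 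Note that the first product in the statement should run over $i=1,\dots,n$; its printed index is a typo.

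\textbf{Main obstacle.} The only real difficulty is bookkeeping: making sure every inequality is encoded exactly once, and that the boundary terms ($a_1$, $b_1$, $d_1$, and the vanishing of $d_{n+1}$ and of the entries in row $n+1$) are treated correctly. I would verify this by checking that the $n=1$ case reproduces, via the elimination rules \eqref{eq:Elimination1}--\eqref{eq:Elimination3} or the package \cite{OmegaPackage}, a brute-force count of small $\text{DSPP}_{001}(1)$ fillings.
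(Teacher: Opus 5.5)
Your proposal is correct and is essentially the paper's own argument. As in the profile-$010$ computation of Section~\ref{sec:PA}, every row and column inequality is recorded as an exponent of a $\lambda$ or $\mu$ variable, the sum factors into independent geometric series in the $a_i,b_i,c_i,d_i$, and your cell assignment $a_i\mapsto(i+1,i)$, $b_i\mapsto(i+1,i+1)$, $c_i\mapsto(i+1,i+2)$, $d_i\mapsto(i,i+2)$ with the left/upper-neighbour bookkeeping reproduces every factor (reading the $c_i$-product as $\prod_{i=1}^{n}$ and $\mu_{i+1,1+2}$ as $\mu_{i+1,i+2}$). The only slip is cosmetic: Step~1 lists the trivial conditions merely as $d_1\geq 0$ and $a_1\geq 0$, whereas Step~2 (rightly, to match the stated form) uses $\lambda_{i,i+2}^{d_i}$ and $\mu_{i+1,i}^{a_i}$ for every $i$, which makes no difference after applying $\underset{\geq}{\Omega}$.
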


\begin{theorem}
The initial values of $F_{001}(n)$ are
$$F_{001}(1)=\frac{1}{(q;q)_{4}}(1+q+q^{2}),$$
$$F_{001}(2)=\frac{1}{(q;q)_{8}}(1+q+q^2+q^3+q^4+2q^5+3q^6+2q^7+q^8+q^9+q^{10}+q^{11}+q^{12}).$$
\end{theorem}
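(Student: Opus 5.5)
The plan is to evaluate the crude form of $F_{001}(n)$ at $n=1$ and $n=2$, eliminating the auxiliary variables one at a time with the rules \eqref{eq:Elimination1}--\eqref{eq:Elimination3}, as was done for $F_{010}(1)$ in Section~\ref{sec:PA}. For $n=1$ it is quicker, and a useful cross-check, to read the inequalities straight off the diagram.

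For $n=1$ the only possibly non-zero entries are $a_1,b_1,c_1,d_1$, since $d_2$ and everything with index $2$ vanish. The diagram imposes exactly two kinds of conditions:
\begin{itemize}
\item the row condition $a_1\geq b_1\geq c_1\geq 0$;
\item the column condition $d_1\geq c_1$, because $d_1$ sits directly above $c_1$.
\end{itemize}
We then write $a_1=c_1+x+y$, $b_1=c_1+x$ and $d_1=c_1+z$ with $c_1,x,y,z\geq 0$. The weight becomes $q^{4c_1+2x+y+z}$, so
\[F_{001}(1)=\frac{1}{(1-q)^2(1-q^2)(1-q^4)}=\frac{1}{(q;q)_4}\cdot\frac{1-q^3}{1-q}=\frac{1+q+q^2}{(q;q)_4}.\]
In the Omega language, the same computation is one application of \eqref{eq:Elimination1} to each of the variables $\mu_{2,1},\mu_{1,3},\mu_{2,2}$, one application of \eqref{eq:Elimination2} to $\lambda_{2,1}$, and one application of \eqref{eq:Elimination3} to the $\lambda$ that links $c_1$ with $b_1$ and $d_1$.

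For $n=2$ there are eight entries, $a_1,a_2,b_1,b_2,c_1,c_2,d_1,d_2$. The plan has three steps.
\begin{enumerate}
\item Write the crude form explicitly and first remove every $\lambda$ or $\mu$ that occurs in only one factor, using \eqref{eq:Elimination1}. These are the exponents attached to the last (zero) boundary entries.
\item Eliminate the variables that occur once in a numerator and once in a denominator, using \eqref{eq:Elimination2}.
\item Handle the remaining variables, each of which has at most two numerator factors against one denominator factor, using \eqref{eq:Elimination3}.
\end{enumerate}
At each stage we choose the variable to eliminate so that the resulting expression stays in one of the three admissible shapes. This is exactly what happened in the $F_{010}(1)$ computation, where eliminating $\lambda_{2,1}$ before $\mu_{1,2}$ was essential.

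The main obstacle is this ordering and bookkeeping. A careless order can leave a factor with two denominator occurrences of the same variable, which none of the stated rules covers. If that happens, I would fall back on the Omega package \cite{OmegaPackage}, as the paper already does in practice.

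Alternatively, the $n=2$ value can be obtained without Omega. Split the region defined by the eight inequalities into chains of total orders, compatible with the poset of the diagram, and sum one term $q^{\text{maj}}/(q;q)_8$ per linear extension, in Stanley's $P$-partition style. The final check is that the numerator multiplied out against $(q;q)_8$ matches the stated polynomial $1+q+\dots+q^{12}$, whose coefficients sum to $1+1+1+1+1+2+3+2+1+1+1+1+1=17$.
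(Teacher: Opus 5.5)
Your direct computation of $F_{001}(1)$ is correct. Falling back on the Omega package for $n=2$ is also what the paper does in practice: it only evaluates the crude form and shows no further computation. However, the Omega-language account you give for $n=1$ is wrong.

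\begin{itemize}
\item In the crude form, $\mu_{1,3}$ encodes the column condition $d_1\ge c_1$. It occurs in the numerator of $d_1$'s factor and in the denominator of $c_1$'s factor $\bigl(1-q\lambda_{2,3}\mu_{2,3}/(\lambda_{2,2}\mu_{1,3})\bigr)^{-1}$, so \eqref{eq:Elimination1} does not apply to it. Setting $\mu_{1,3}=1$ discards $d_1\ge c_1$ and yields the numerator $1+q+q^2+q^3$.
\item No single variable links $c_1$ to both $b_1$ and $d_1$. Those two conditions are carried separately by $\lambda_{2,2}$ and $\mu_{1,3}$, and \eqref{eq:Elimination3} encodes a condition $i+j\ge k$, not $\min(i,j)\ge k$.
\item The correct sequence is \eqref{eq:Elimination1} for the five singletons $\mu_{2,1},\lambda_{1,3},\mu_{2,2},\lambda_{2,3},\mu_{2,3}$. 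Then apply \eqref{eq:Elimination2} to $\lambda_{2,1}$, then $\mu_{1,3}$, then $\lambda_{2,2}$. This gives $1/\bigl((1-q)^2(1-q^2)(1-q^4)\bigr)$.
\end{itemize}

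For $n=2$, step 3 of your hand plan cannot work in any order, and the fallback is genuinely needed.
\begin{itemize}
\item Suppose every step is a literal application of \eqref{eq:Elimination1}--\eqref{eq:Elimination3}, with the eliminated variable never occurring in a numerator. Then every numerator $1-ABC$ created along the way is free of variables, and the result is a quotient of products of factors $1-q^s$.
\item In that case the numerator $N(q)$ of $F_{001}(2)$ over $(q;q)_8$ would be a product of cyclotomic polynomials. This is impossible: $N(1)=17$ would force a factor $\Phi_{17^k}$, of degree at least $16>12=\deg N$.
\item So at some point a variable must be eliminated while it sits inside a numerator $1-ABC$, a shape none of the three rules covers. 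You would have to split by linearity and handle monomial prefactors, or use the package.
\end{itemize}

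Your $P$-partition alternative avoids all of this and is a genuinely different, hand-checkable proof. Take the natural labeling $a_1,d_1,b_1,a_2,c_1,b_2,d_2,c_2\mapsto 1,\dots,8$. The poset then has exactly $17$ linear extensions, with major indices $0,1,2,3,4,5,5,6,6,6,7,7,8,9,10,11,12$, which reproduces the stated numerator exactly. You should write this enumeration out; the count $17$ by itself only checks the numerator at $q=1$.
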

\begin{theorem}\label{F001Rec}
For any $n\geq1$,
\begin{equation}\label{eq:F001Rec}
F_{001}(n)=\frac{1+q^{4n-2}+q^{4n-3}}{(q^{4n-3};q)_4}F_{001}(n-1)+\frac{1+q^{4n-2}}{(q^{4n-3};q)_4}F'_{001}(n-1),    
\end{equation}
\begin{equation}\label{eq:F'001Rec}
F'_{001}(n)=\frac{q^{4n-1}+q^{4n}+q^{8n-3}+q^{8n-2}}{(q^{4n-3};q)_4}F_{001}(n-1)+\frac{q^{4n-1}+q^{4n}+q^{8n-2}}{(q^{4n-3};q)_4}F'_{001}(n-1).   
\end{equation}
\end{theorem}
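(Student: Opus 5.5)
We prove Theorem~\ref{F001Rec} by the same peel-off argument used for Theorem~\ref{thm:F010Rec}: strip the last layer of the diagram and record, via a reorder-and-subtract map, the weight that was removed. First we read off the local order conditions. In the profile $001$ diagram, row $i+1$ reads $a_i,b_i,c_i,d_{i+1}$, and $d_i$ sits directly above $c_i$. Hence the new cells $a_n,b_n,c_n,d_n$ satisfy
\[ c_n\le b_n\le a_n,\qquad c_n\le d_n, \]
with upper bounds coming from the previous layer:
\[ a_n\le b_{n-1},\qquad b_n\le c_{n-1},\qquad d_n\le c_{n-1},\qquad d_n\le d_{n-1}. \]
The poset on $\{a_n,b_n,c_n,d_n\}$ (minimum $c_n$, a chain $c_n<b_n<a_n$, and $d_n$ above $c_n$) has exactly three linear extensions:
\[ c<d<b<a,\qquad c<b<d<a,\qquad c<b<a<d. \]
This matches the three terms of the numerator $1+q^{4n-3}+q^{4n-2}$ in \eqref{eq:F001Rec}. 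The split that decides whether the result lies in $\text{DSPP}_{001}(n-1)$ or in $\text{DSPP}'_{001}(n-1)$ is whether $a_n\le c_{n-1}$.

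\textbf{Case 1: $a_n\le c_{n-1}$.} All four new entries are then bounded by $c_{n-1}$, and also by $d_{n-1}$, $b_{n-1}$, and every other earlier entry. In the base subcase $a_n\ge d_n\ge b_n\ge c_n$ we subtract, in turn, $c_n$, $b_n-c_n$, $d_n-b_n$ and $a_n-d_n$ from all surviving cells, exactly as in Case~1(i) of Theorem~\ref{thm:F010Rec}. This leaves an element of $\text{DSPP}_{001}(n-1)$ and contributes $1/(q^{4n-3};q)_4$; the four removed parts have $4n,4n-1,4n-2,4n-3$ cells.

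Each other linear extension is reduced to the base subcase by subtracting $1$ from a suitable set of cells and swapping two of the new entries.
\begin{itemize}
\item[(a)] If $a_n\ge b_n>d_n\ge c_n$, we subtract $1$ from every cell except $c_n,d_n$ and swap $b_n-1$ with $d_n$. This removes weight $q^{4n-2}$.
\item[(b)] If $d_n>a_n\ge b_n\ge c_n$, we subtract $1$ from every cell except $a_n,b_n,c_n$ and cyclically rearrange so that $d_n-1$ occupies the $a$-position. This removes weight $q^{4n-3}$.
\end{itemize}
In each subcase we check that the new entries satisfy the base order and all column and row inequalities with the previous layer; this is the same bookkeeping as in Theorem~\ref{thm:F010Rec}. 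Summing the three subcases gives the first term of \eqref{eq:F001Rec}.

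\textbf{Case 2: $a_n>c_{n-1}$.} Here $a_n$ survives as the extra cell $a_n$ of an element of $\text{DSPP}'_{001}(n-1)$, and we must remove $c_{n-1},b_n,c_n,d_n$ instead. Their local poset is $c_n<b_n\le c_{n-1}$ and $c_n<d_n\le c_{n-1}$, which has two linear extensions. The one with $d_n\ge b_n$ is handled by direct successive subtraction, contributing $1/(q^{4n-3};q)_4$. The one with $b_n>d_n$ is handled by subtracting $1$ from all cells except $d_n$ and $c_n$ and swapping $b_n-1$ with $d_n$, which costs an additional $q^{4n-2}$. This yields the numerator $1+q^{4n-2}$ of the second term of \eqref{eq:F001Rec}.

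\textbf{Recurrence \eqref{eq:F'001Rec}.} For an element of $\text{DSPP}'_{001}(n)$ we also have the extra cell $a_{n+1}>0$, which sits under $b_n$. We first subtract $1$ from every cell of a minimal forced set, so as to remove the constraint $a_{n+1}\ge 1$. This produces the common factors $q^{4n-1}$ and $q^{4n}$; two choices arise because $a_{n+1}$ may or may not be tied to $b_n$. After that, the same two-way split on $a_n$ versus $c_{n-1}$ applies, together with the linear-extension enumeration above, now on a five-element poset. The numerators $q^{4n-1}+q^{4n}+q^{8n-3}+q^{8n-2}$ and $q^{4n-1}+q^{4n}+q^{8n-2}$ should come out as sums of $q^{\text{maj}}$ over the admissible linear extensions.

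\textbf{Main obstacle.} We expect the main difficulty to be this second recurrence: getting the swaps right so that every map is a genuine bijection onto $\text{DSPP}_{001}(n-1)$ or $\text{DSPP}'_{001}(n-1)$. To make this rigorous and uniform, we would recast each case as a Stanley $P$-partition count over a poset of $4$ or $5$ cells. The fundamental lemma of $P$-partitions gives $\sum_{\sigma}q^{\mathrm{maj}(\sigma)}/(q^{4n-3};q)_4$ automatically, reducing the proof to computing the major indices of a handful of linear extensions. As a final sanity check, we would verify the recurrences against the initial values $F_{001}(1)$ and $F_{001}(2)$, computed with the Omega package.
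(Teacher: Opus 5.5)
Your proof of \eqref{eq:F001Rec} is correct. It is the same peel-off argument the paper intends; the paper omits the proof as similar to Theorem~\ref{thm:F010Rec}. Here $c_{n-1}$ is the minimum of the first $n-1$ layers, the split $a_n\le c_{n-1}$ versus $a_n>c_{n-1}$ is the right one, and your swaps give exactly $1+q^{4n-2}+q^{4n-3}$ and $1+q^{4n-2}$.

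The gap is \eqref{eq:F'001Rec}, which is half of the statement. You only sketch it, and the sketch misdescribes the object. In $\text{DSPP}'_{001}(n)$ one has $c_n=0$, so the cells added to the shape of $\text{DSPP}_{001}(n-1)$ are exactly $a_n,b_n,d_n,a_{n+1}$. That is four cells, not a five-element poset; five cells would force a five-factor denominator rather than $(q^{4n-3};q)_4$. These cells satisfy $1\le a_{n+1}\le b_n\le a_n$, with $d_n$ unrelated to them, together with $b_n,d_n\le c_{n-1}$ and $a_n\le b_{n-1}$. The positivity $a_{n+1}\ge 1$ is not a $P$-partition condition, so the fundamental lemma does not apply verbatim. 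The two leading factors $q^{4n-1},q^{4n}$ also do not come from whether $a_{n+1}$ is tied to $b_n$. They come from comparing $d_n$ with $a_{n+1}$, since $d_n$ is the only cell that can lie below $a_{n+1}$ (it may even be $0$).

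Here is the missing step, done explicitly.
\begin{itemize}
\item[(i)] If $d_n<a_{n+1}$, subtract $1$ from every cell except $d_n$, at weight $q^{4n-1}$. In Case 1 this leaves the chain $d_n\le a_{n+1}-1\le b_n-1\le a_n-1$. In Case 2, where you remove $c_{n-1},b_n,d_n,a_{n+1}$ and keep $a_n>c_{n-1}$, it leaves $d_n\le a_{n+1}-1\le b_n-1\le c_{n-1}-1$. Each chain contributes $1/(q^{4n-3};q)_4$.
\item[(ii)] If $d_n\ge a_{n+1}$, subtract $1$ from all $4n$ cells, at weight $q^{4n}$. Then $a_{n+1}-1$ is a minimum lying below the chain through $b_n-1$ and below $d_n-1$. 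In each case this is precisely the local poset you already treated for \eqref{eq:F001Rec}, with $a_{n+1}-1$ playing the role of $c_n$. So it contributes $1+q^{4n-2}+q^{4n-3}$ in Case 1 and $1+q^{4n-2}$ in Case 2.
\end{itemize}
Summing gives $q^{4n-1}+q^{4n}(1+q^{4n-2}+q^{4n-3})$ and $q^{4n-1}+q^{4n}(1+q^{4n-2})$, which are the stated numerators. Without such a computation, ``should come out as sums of $q^{\mathrm{maj}}$'' is an unproved claim. Comparing with $F_{001}(1)$ and $F_{001}(2)$ only checks finitely many cases.
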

Again, the proof is similar to Theorem~\ref{thm:F010Rec}, so we skip it. Consequently, let $P_{001}(n):=(q;q)_{4n}F_{001}(n)$, we have the following.
\begin{theorem}
For any $n\geq1$,
\begin{equation}\label{eq:P001Rec}
P_{001}(n)=(1+q^{4n-2}+q^{4n-3})P_{001}(n-1)+(1+q^{4n-2})P'_{001}(n-1),    
\end{equation}
\begin{equation}\label{eq:P'001Rec}
P'_{001}(n)=(q^{4n-1}+q^{4n}+q^{8n-3}+q^{8n-2})P_{001}(n-1)+(q^{4n-1}+q^{4n}+q^{8n-2})P'_{001}(n-1).     
\end{equation}
\end{theorem}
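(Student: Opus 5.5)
The statement follows from Theorem~\ref{F001Rec} by multiplying both recurrences through by $(q;q)_{4n}$, with $P'_{001}(n):=(q;q)_{4n}F'_{001}(n)$ defined in parallel with $P_{001}(n)$. The plan is to use the factorization
\[
(q;q)_{4n}=(q;q)_{4n-4}\,(q^{4n-3};q)_4,
\]
which holds for every $n\ge 1$ since $(q^{4n-3};q)_4=(1-q^{4n-3})(1-q^{4n-2})(1-q^{4n-1})(1-q^{4n})$.

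First, multiply \eqref{eq:F001Rec} by $(q;q)_{4n}$. On the left this gives $P_{001}(n)$ by definition. On the right, each term has the shape $\frac{c(q)}{(q^{4n-3};q)_4}F_{001}(n-1)$ or $\frac{c(q)}{(q^{4n-3};q)_4}F'_{001}(n-1)$, with $c(q)$ the stated numerator polynomial. By the factorization, each becomes $c(q)\,(q;q)_{4n-4}F_{001}(n-1)=c(q)P_{001}(n-1)$, and likewise for the primed term. This yields \eqref{eq:P001Rec} exactly. The same manipulation applied to \eqref{eq:F'001Rec} gives \eqref{eq:P'001Rec}, because its denominators are also $(q^{4n-3};q)_4$.

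The base case $n=1$ needs checking: there $(q;q)_0=1$, so $P_{001}(0)=F_{001}(0)=1$, the generating function of the empty DSPP. The quantity $P'_{001}(0)=F'_{001}(0)$ must be read with the convention implicit in Theorem~\ref{F001Rec} at $n=1$; since the identity is a termwise rescaling of that theorem, whatever convention makes Theorem~\ref{F001Rec} valid at $n=1$ carries over unchanged.

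There is no real obstacle here; the argument only unwinds the normalization. All the combinatorial content lives in Theorem~\ref{F001Rec}, whose proof the paper likewise defers with "similar to Theorem~\ref{thm:F010Rec}". The only care needed is consistent bookkeeping of the $F'$ terms, including the convention for $F'_{001}(0)$.
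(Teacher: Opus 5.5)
Your proposal is correct and follows the paper's own route: the $P$-recurrences come from Theorem~\ref{F001Rec} by multiplying through by $(q;q)_{4n}=(q;q)_{4n-4}\,(q^{4n-3};q)_4$. On the base case, the convention forced at $n=1$ is $P'_{001}(0)=F'_{001}(0)=0$; you can confirm this from $F_{001}(0)=1$ and the initial value $F_{001}(1)=(1+q+q^2)/(q;q)_4$.
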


Similar to the previous cases, we can uncouple the recurrences. Here we get \begin{align}
 \nonumber q^{4 + 8 n} (1 + q^{6 + 4 n}) P_{001}(
   n) - (1 + q^{4 + 4 n}) (1 + q^{2 + 4 n} + q^{3 + 4 n} + q^{
    5 + 4 n} &+ q^{6 + 4 n} + q^{8 + 8 n}) P_{001}(n+1)\\ \label{P001_rec_uncoupled}&+ (1 + q^{2 + 4 n}) P_{001}( n+2) = 0    
\end{align}

Here we have the following closed form generating function for $P_{001}(n)$.

\begin{theorem}\label{thm:P001_fermionic_rep}
    For $n\geq 0$, \begin{align}
        \label{P001_fermionic_rep} P_{001}(n) &= \sum_{j,k\geq 0} q^{j^2 + k^2 + j} {2 n - k-1\brack j}_{q^2} {j+1\brack k}_{q^2}.
    \end{align}
\end{theorem}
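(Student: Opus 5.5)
The plan is to prove Theorem~\ref{thm:P001_fermionic_rep} the same way as Theorems~\ref{thm:P010_fermionic_rep} and \ref{thm:P000_fermionic_rep}. We show that the right-hand side of \eqref{P001_fermionic_rep},
\[
S(n):=\sum_{j,k\geq 0} q^{j^2 + k^2 + j} {2 n - k-1\brack j}_{q^2} {j+1\brack k}_{q^2},
\]
satisfies the uncoupled second-order recurrence \eqref{P001_rec_uncoupled} and agrees with $P_{001}(n)$ for $n=0,1$. By uniqueness of solutions of a second-order recurrence whose leading coefficient $1+q^{2+4n}$ is nonzero, this gives $P_{001}(n)=S(n)$ for all $n\geq 0$.

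\textbf{Step 1: $P_{001}$ satisfies \eqref{P001_rec_uncoupled}.} From \eqref{eq:P001Rec} we solve for $P'_{001}(n-1)$ by dividing by $1+q^{4n-2}$. We substitute this expression, and its shift $n\mapsto n+1$, into \eqref{eq:P'001Rec}. This eliminates $P'_{001}$ and yields a three-term relation among $P_{001}(n),P_{001}(n+1),P_{001}(n+2)$. Clearing denominators should reproduce \eqref{P001_rec_uncoupled}, and we check this by direct polynomial algebra.

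\textbf{Step 2: initial values.} We have $P_{001}(0)=1$, and $P_{001}(1)=(q;q)_4F_{001}(1)=1+q+q^2$. On the sum side, $S(0)$ has only the $j=k=0$ term, since ${-1-k\brack j}=0$ unless $j=0$, and with the paper's convention ${-1\brack 0}=0$. The paper's convention gives ${n\brack m}=0$ outside $n\ge m\ge 0$, so this needs care. I expect the intended convention is that $S(0)=1$, possibly by reading ${m\brack 0}=1$ for every $m$, or by starting at $n=1$ and using $P_{001}(2)$ as the second initial value instead.

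Then $S(1)$: with $2n-k-1=1-k$ we get the following terms.
\begin{itemize}
\item $j=0,k=0$ gives $1$.
\item $j=1,k=0$ gives $q\cdot q^{1}=q^2$.
\item $k=1$ forces $j=0$, giving $q\cdot{1\brack 1}=q$.
\end{itemize}
So $S(1)=1+q+q^2$, which matches. As a safeguard we also check $n=2$ against the $F_{001}(2)$ numerator.

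\textbf{Step 3: $S$ satisfies \eqref{P001_rec_uncoupled}.} This is the main obstacle. Following the paper, I would use the creative telescoping approach of \cite{qMultiSum}. First, apply the $q$-Zeilberger/Sister-Celine style algorithm to the summand
\[
F(n,j,k)=q^{j^2+k^2+j}{2n-k-1\brack j}_{q^2}{j+1\brack k}_{q^2}.
\]
The goal is rational certificates $R_1,R_2$ such that
\[
\sum_{i=0}^2 c_i(n)F(n+i,j,k)=\Delta_j\bigl(R_1F\bigr)+\Delta_k\bigl(R_2F\bigr),
\]
where the $c_i$ are the coefficients of \eqref{P001_rec_uncoupled}. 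Verifying this certificate identity reduces to a rational function identity, obtained by dividing by $F(n,j,k)$ and using the ratios of $q$-binomials under shifts. Summing over all $j,k$ then telescopes, provided the boundary terms vanish, which holds because the $q$-binomials vanish outside their natural ranges.

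If the direct double-sum certificate is too unwieldy, a fallback is to first sum over $j$ for fixed $k$, or to split $S(n)$ via the $q$-Pascal rule on ${2n-k-1\brack j}$. This relates $S$ to the already established sums in \eqref{P010_fermionic_rep} and \eqref{P010p_fermionic_rep}, and a coupled first-order system for $S$ then mirrors \eqref{eq:P001Rec}--\eqref{eq:P'001Rec} directly, with a natural candidate for $P'_{001}$.
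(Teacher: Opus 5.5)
Your route is the paper's: for this formula (as for \eqref{P010_fermionic_rep} and \eqref{P000_fermionic_rep}) the paper uses \cite{qMultiSum} to show that the double sum satisfies the uncoupled recurrence \eqref{P001_rec_uncoupled}, and then matches initial values. Your Step~1 is fine. Eliminating $P'_{001}$ from \eqref{eq:P001Rec}--\eqref{eq:P'001Rec} does give \eqref{P001_rec_uncoupled}, and $P_{001}$ satisfies it already at $n=0$, as one checks from $P_{001}(0)=1$ and the stated values of $P_{001}(1)$, $P_{001}(2)$. The genuine gap is the base case, which you flag but do not resolve, and your first suggested resolution is wrong. Under the paper's convention $S(0)=0$. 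Under the reading ${m\brack 0}=1$ for all $m$ one gets $S(0)=1+q$, from the terms $j=0$, $k=0,1$. In either case this differs from $P_{001}(0)=1$, and no convention makes the identity true at $n=0$. So a uniqueness argument seeded at $n=0,1$ cannot work, and the statement has to be read for $n\ge 1$.

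This also breaks your Step~3 as stated. Direct computation gives $S(1)=P_{001}(1)$ and $S(2)=1+q+q^2+q^3+q^4+2q^5+3q^6+2q^7+q^8+q^9+q^{10}+q^{11}+q^{12}=P_{001}(2)$. Plugging $S$ into \eqref{P001_rec_uncoupled} at $n=0$ therefore leaves $q^4(1+q^6)\bigl(S(0)-P_{001}(0)\bigr)=-q^4(1+q^6)\neq 0$, so the sum does not satisfy the recurrence at $n=0$. Your claim that the telescoping closes for all $n$ because ``the $q$-binomials vanish outside their natural ranges'' is thus false there. Indeed, your summand satisfies $F(0,j,k)\equiv 0$ while $F(1,j,k)$ and $F(2,j,k)$ do not vanish, so the certificate identity obtained after dividing by $F(n,j,k)$ says nothing at $n=0$. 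The fix is your second option:
\begin{enumerate}
\item establish the recurrence for $S$ only for $n\ge 1$, checking the boundary contributions in that range;
\item use $S(1)=P_{001}(1)$ and $S(2)=P_{001}(2)$ as the two seeds;
\item conclude $P_{001}(n)=S(n)$ for $n\ge 1$.
\end{enumerate}
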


The right-hand side of \eqref{P001_fermionic_rep} was shown to be the generating function for the number of first little G\"ollnitz identities' partitions that satisfy the gap conditions with the additional bound that all parts $\leq 4n-2$ in \cite[Theorem 4.6]{doublesums}. Therefore, in the limit $n\rightarrow\infty$, the little G\"ollnitz theorem's product size proves \eqref{DSPP_littleG1}.

\section{Profile $100$}\label{sec:100}

The diagram of a partition in $\text{DSPP}_{100}$ with at most $4n$ parts can be of the following shapes.
\begin{center}
\begin{ytableau}
b_1 & c_1 & d_1\\
a_1 & b_2 & c_2 & d_2\\
\none & a_2 & \none[\ddots] & \none[\ddots] & \none[\ddots]\\
\none & \none & \none[\ddots] & b_{n} & c_{n} & d_{n}\\
\none & \none & \none & a_{n} 
\end{ytableau}
\hspace{2cm}
\begin{ytableau}
b_1 & c_1 & d_1\\
a_1 & b_2 & c_2 & d_2\\
\none & a_2 & \none[\ddots] & \none[\ddots] & \none[\ddots]\\
\none & \none & \none[\ddots] & b_{n} & c_{n} & 0\\
\none & \none & \none & a_{n} & b_{n+1} 
\end{ytableau}
\end{center}
Let $\text{DSPP}_{100}(n)$ be the set of double shifted partitions of profile ${100}$ with at most $n$ entries on each diagonal, that is, $a_{n+1}=b_{n+1}=c_{n+1}=d_{n+1}=0$. And let $\text{DSPP}'_{100}(n)$ be the set of double shifted partitions with profile ${100}$ such that $b_{n+1}>0$ and $a_{n+1}=c_{n+1}=d_{n}=0$. Finally, let $\text{DSPP}^{T}_{100}(n)$ be the set of double shifted partitions with profile ${100}$ that have at most $4n$ parts. Then $\text{DSPP}_{100}(n)\cap\text{DSPP}'_{100}(n)=\emptyset$ and $\text{DSPP}^{T}_{100}(n)=\text{DSPP}_{100}(n)\cup\text{DSPP}'_{100}(n)$. Let 
$$F_{100}(n):=\sum_{\Lambda\in\text{DSPP}_{100}(n)}q^{|\Lambda|},\quad F'_{100}(n):=\sum_{\Lambda\in\text{DSPP}'_{100}(n)}q^{|\Lambda|}\quad\text{and}\quad F^{T}_{100}(n):=\sum_{\Lambda\in\text{DSPP}^{T}_{100}(n)}q^{|\Lambda|}$$
be their generating functions, respectively.

\begin{theorem}
The crude form for $F_{100}(n)$ is
\begin{equation}
\begin{split}
F_{100}(n)=&\underset{\geq}{\Omega}\frac{1}{(1-q\lambda_{1,1}\mu_{1,1})}\left(1-\frac{q\lambda_{1,2}\mu_{1,2}}{\lambda_{1,1}}\right)^{-1}\prod_{n=2}^{n}\left(1-\frac{q\lambda_{i,i}\mu_{i,i}}{\lambda_{i,i-1}\mu_{i-1,i}}\right)^{-1}\left(1-\frac{q\lambda_{i,i+1}\mu_{i,i+1}}{\lambda_{i,i}\mu_{i-1,i+1}}\right)^{-1}\\
&\times\prod_{i=1}^{n}\left(1-\frac{q\lambda_{i+1,i}\mu_{i+1,i}}{\mu_{i,i}}\right)^{-1}\left(1-\frac{q\lambda_{i,i+2}\mu_{i,i+2}}{\lambda_{i,i+1}}\right)^{-1}.
\end{split}
\end{equation}
\end{theorem}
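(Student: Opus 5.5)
The plan is to encode the defining inequalities of $\text{DSPP}_{100}(n)$ as exponents of auxiliary variables, exactly as in Section~\ref{sec:PA} for profile $010$, and then to sum the resulting geometric series.

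First I read off the constraints from the diagram for profile $100$. Since $\lambda^0\preceq\lambda^1$, the $b$–diagonal dominates the $a$–diagonal. Since $\lambda^1\succeq\lambda^2\succeq\lambda^3$, the $c$ and $d$ diagonals sit to the right. The row conditions, for $1\le i\le n$ and with all entries of index $>n$ set to $0$, are
\[b_i\ge c_i,\quad c_i\ge d_i,\quad d_i\ge 0,\quad a_i\ge b_{i+1}.\]
These I attach to variables $\lambda_{i,i}$, $\lambda_{i,i+1}$, $\lambda_{i,i+2}$ and $\lambda_{i+1,i}$ respectively. The column conditions are
\[a_i\ge 0,\quad b_i\ge a_i,\quad c_{i-1}\ge b_i\ (i\ge 2),\quad d_{i-1}\ge c_i\ (i\ge2),\]
attached to $\mu_{i+1,i}$, $\mu_{i,i}$, $\mu_{i-1,i}$ and $\mu_{i-1,i+1}$. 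Each inequality $x-y\ge0$ contributes the factor $\lambda^{x-y}$ or $\mu^{x-y}$.

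Next I regroup by entry. Each entry collects the following total monomial:
\begin{itemize}
\item $a_i$ gives $\bigl(q\lambda_{i+1,i}\mu_{i+1,i}/\mu_{i,i}\bigr)^{a_i}$.
\item $b_i$ gives $\bigl(q\lambda_{i,i}\mu_{i,i}/(\lambda_{i,i-1}\mu_{i-1,i})\bigr)^{b_i}$, where for $i=1$ the denominator is absent.
\item $c_i$ gives $\bigl(q\lambda_{i,i+1}\mu_{i,i+1}/(\lambda_{i,i}\mu_{i-1,i+1})\bigr)^{c_i}$, again with the $\mu$-denominator absent for $i=1$.
\item $d_i$ gives $\bigl(q\lambda_{i,i+2}\mu_{i,i+2}/\lambda_{i,i+1}\bigr)^{d_i}$.
\end{itemize}
Here I adopt the labeling convention of the stated crude form for which variable carries which index.

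Summing each $a_i,b_i,c_i,d_i\ge0$ independently as a geometric series under $\Omega_{\geq}$ gives a product of factors $(1-\cdot)^{-1}$. This is legitimate because $\Omega_{\geq}$ enforces precisely the nonnegativity of every encoded exponent. Separating out $b_1$ and $c_1$, which lack the $\mu_{0,\cdot}$ and $\lambda_{1,0}$ denominators, yields the stated product. The $i=1$ factors for $b$ and $c$ are pulled out front, the products over $i=2,\dots,n$ cover $b_i,c_i$, and the products over $i=1,\dots,n$ cover $a_i,d_i$.

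The main obstacle is bookkeeping rather than mathematics. I must ensure every inequality in the definition appears exactly once, and that boundary terms behave correctly. In particular, at $i=n$ the variables $\lambda_{n+1,n}$ and $\mu_{n+1,n}$ occur only in the numerator. Those variables can then be eliminated by \eqref{eq:Elimination1}, which correctly reflects that $a_n\ge b_{n+1}=0$ imposes no constraint. As a check, I would run the Omega package for $n=1$ and verify the result against a direct count, namely $\frac{1}{(q;q)_4}$ times a small polynomial.
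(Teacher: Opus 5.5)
Your proposal is correct and follows the paper's method: you encode each row and column inequality of the profile-$100$ diagram as an exponent of a $\lambda$ or $\mu$ variable and then sum the resulting geometric series, exactly as done for profile $010$ in Section~\ref{sec:PA}. One small bookkeeping fix: your $c_n$ and $d_n$ monomials (and the stated formula) contain $\mu_{n,n+1}$ and $\mu_{n,n+2}$, so the column conditions $c_{i-1}\ge b_i$ and $d_{i-1}\ge c_i$ should run over $2\le i\le n+1$, which adds the trivially true $c_n\ge b_{n+1}=0$ and $d_n\ge c_{n+1}=0$; this changes nothing in the value of the Omega expression.
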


\begin{theorem}
The initial values of $F_{100}(n)$ are
$$F_{100}(1)=\frac{1}{(q;q)_{4}}(1+q^2+q^3),$$
$$F_{100}(2)=\frac{1}{(q;q)_{8}}(1+q^2+q^3+q^4+q^5+2q^6+2q^7+q^8+2q^9+2q^{10}+q^{11}+q^{12}+q^{13}).$$
\end{theorem}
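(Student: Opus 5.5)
The plan is to verify both initial values by evaluating the crude form above directly, as was done for $F_{010}(1)$ in Section~\ref{sec:PA}. For $n=1$ I will also give a short combinatorial check in the spirit of the case analysis in the proof of Theorem~\ref{thm:F010Rec}.

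\textbf{The case $n=1$.} The diagram has four entries $b_1,c_1,d_1$ (first row) and $a_1$ (below $b_1$). The only constraints are
\[
b_1\ge c_1\ge d_1\ge 0 \quad\text{and}\quad b_1\ge a_1\ge 0 .
\]
So $F_{100}(1)$ is the generating function of the linear extensions of this poset. I will split according to where $a_1$ falls relative to the chain $c_1\ge d_1$:
\begin{itemize}
\item[(i)] $b_1\ge a_1\ge c_1\ge d_1$. Subtracting successively as in Case~1(i) of Theorem~\ref{thm:F010Rec} gives $1/(q;q)_4$.
\item[(ii)] $b_1\ge c_1> a_1\ge d_1$. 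Subtracting $1$ from $b_1$ and $c_1$ reduces to a weak chain, so this case contributes $q^2/(q;q)_4$.
\item[(iii)] $b_1\ge c_1\ge d_1> a_1$. Subtracting $1$ from $b_1,c_1,d_1$ reduces to a weak chain, so this case contributes $q^3/(q;q)_4$.
\end{itemize}
These three cases are disjoint and exhaustive, and their sum is $(1+q^2+q^3)/(q;q)_4$, which is the claimed value.

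Alternatively, the Omega computation runs as follows. In the crude form with $n=1$, rule \eqref{eq:Elimination1} eliminates $\lambda_{1,3},\mu_{1,3},\mu_{1,2},\lambda_{2,1},\mu_{2,1}$. Then \eqref{eq:Elimination2} eliminates $\lambda_{1,2}$, and then $\mu_{1,1}$. This leaves
\[
\underset{\geq}{\Omega}\,\frac{1}{(1-q\lambda_{1,1})(1-q^2\lambda_{1,1})(1-q/\lambda_{1,1})(1-q^2/\lambda_{1,1})}.
\]
This last form has two denominators with $\lambda_{1,1}$ and two with $\lambda_{1,1}^{-1}$, so it is not covered by \eqref{eq:Elimination1}--\eqref{eq:Elimination3}. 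I would handle it by expanding in geometric series, summing over the nonnegative exponent exactly as in the proof of \eqref{eq:Elimination3}, and simplifying to $(1+q^2+q^3)/(q;q)_4$.

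\textbf{The case $n=2$.} The same strategy applies, but the poset now has eight entries. Its defining relations are the row chains
\[
b_1\ge c_1\ge d_1,\qquad a_1\ge b_2\ge c_2\ge d_2,
\]
the column relations $b_1\ge a_1$, $c_1\ge b_2$, $d_1\ge c_2$, $b_2\ge a_2$, and $a_2\ge0$. Its numerator should be the descent-type generating polynomial of its linear extensions. Enumerating these by hand is tedious, so I would obtain the value from the crude form with the Omega package \cite{OmegaPackage}, as the paper does for the other profiles. I would cross-check the result in two ways: the numerator coefficients should sum to the number of linear extensions, and $F_{100}(2)$ should be consistent with the recurrence for $F_{100}(n)$ obtained from the analogue of the case analysis in Theorem~\ref{thm:F010Rec}, applied starting from $F_{100}(0)=1$ and $F'_{100}(0)$.

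The main obstacle is this $n=2$ elimination. The intermediate Omega forms contain variables with several positive and several negative occurrences, to which the three listed rules do not apply. This is why I would rely on machine computation there, and on the independent linear-extension count as a check.
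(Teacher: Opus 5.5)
Your proposal is correct and follows the paper, which likewise obtains these initial values by evaluating the crude Omega form, by machine for $n=2$. Your $n=1$ arguments also check out: both the three-case chain decomposition and the Omega elimination ending in $(1+q^2+q^3)/(q;q)_4$ are right. In addition, your own linear-extension idea settles $n=2$ by hand without the package. The poset has $17$ linear extensions, and with the reference order $b_1,a_1,c_1,b_2,d_1,a_2,c_2,d_2$ their major indices reproduce exactly the stated numerator $1+q^2+q^3+q^4+q^5+2q^6+2q^7+q^8+2q^9+2q^{10}+q^{11}+q^{12}+q^{13}$.
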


\begin{theorem}\label{thm:F100Rec}
The following recurrences hold for $F_{100}(n)$ and $F'_{100}(n)$.
\begin{equation}
F_{100}(n)=\frac{1+q^{4n-1}+q^{4n-2}}{(q^{4n-3};q)_4}F_{100}(n-1)+\frac{1+q+q^{4n-1}+q^{4n-2}}{(q^{4n-3};q)_4}F'_{100}(n-1),
\end{equation}
\begin{equation}
F'_{100}(n)=\frac{q^{4n}(1+q^{4n-2}}{(q^{4n-3};q)_4}F_{100}(n-1)+\frac{q^{4n}(1+q+q^{4n-2})}{(q^{4n-3};q)_4}F'_{100}(n-1).
\end{equation}
\end{theorem}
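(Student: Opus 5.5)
We prove Theorem~\ref{thm:F100Rec} by the combinatorial reduction used for Theorem~\ref{thm:F010Rec}: strip the last layer of entries from a DSPP with profile $100$ by subtracting constants and swapping entries. For $\text{DSPP}_{100}(n)$ the last layer is $a_n,b_n,c_n,d_n$. From the diagram, the order constraints are:
\begin{itemize}
\item rows: $b_n\ge c_n\ge d_n$ and $a_{n-1}\ge b_n$;
\item columns: $a_{n-1}\ge a_n$ and $c_{n-1}\ge c_n$;
\item coupling to the remaining diagram: $c_{n-1}\ge b_n\ge a_n$ and $d_{n-1}\ge c_n$.
\end{itemize}
So $b_n$ is the largest entry of the layer and $d_n$ is smaller than $c_n$. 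The free relative positions are those of $a_n$ against $c_n,d_n$, together with the comparison of the new entry with the adjacent entry of layer $n-1$. The basic chain move removes four entries and subtracts $1,2,3,4$ units times the appropriate column counts. It always contributes the factor $1/(q^{4n-3};q)_4$, exactly as in Case~1(i) of Theorem~\ref{thm:F010Rec}.

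First split $\text{DSPP}_{100}(n)$ according to where the reduced object lands. Case~1 is $c_n\le d_{n-1}$-type behaviour, i.e.\ the stripped remainder is a genuine element of $\text{DSPP}_{100}(n-1)$. Case~2 is when a boundary entry forces $b_n$ to survive as the extra entry $b_{n}$ of the primed shape, so the remainder lies in $\text{DSPP}'_{100}(n-1)$.

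In Case~1, enumerate the total orders of $\{a_n,c_n,d_n\}$ below $b_n$. There are three: $c_n\ge d_n\ge a_n$, $c_n\ge a_n> d_n$, and $a_n>c_n\ge d_n$. Each non-standard order is converted into the standard one by subtracting $1$ from all parts except a specified set, then swapping. Counting the parts decremented gives the shifts $q^{4n-2}$ and $q^{4n-1}$, which yields the numerator $1+q^{4n-1}+q^{4n-2}$.

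In Case~2 the same orders occur, together with one extra configuration where the tie with the previous layer is broken. That configuration is responsible for the curious term $q$ in $1+q+q^{4n-1}+q^{4n-2}$: only one entry, the corner, is decremented. The recurrence for $F'_{100}(n)$ is handled identically. There the extra entry $b_{n+1}>0$ must be removed first, and its positivity and position force a uniform shift of the whole last layer, giving the prefactor $q^{4n}$ and the numerators $1+q^{4n-2}$ and $1+q+q^{4n-2}$.

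For each case we will check two things: that the map is reversible, since the order type is recoverable from the image, and that the image set is exactly the claimed set. As a safeguard we will verify the recurrences against the Omega-computed $F_{100}(1)$ and $F_{100}(2)$ stated above, which pins down any misplaced monomial.

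The main obstacle is the Case~2 bookkeeping. The profile $100$ puts $b_1$ in the corner and makes the primed shape end in $b_{n+1}$ rather than $a_{n+1}$, so the correspondence between subcases and monomials differs from profile $010$. In particular, the low-degree term $q$ signals a move that touches a single cell. First we will identify that move by matching coefficients with $F_{100}(2)$, and then justify it bijectively.
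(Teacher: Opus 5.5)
Your strategy (peel off the last layer, classify by order type, turn non-standard orders into the standard one by unit shifts) is what the paper intends; it only says the proof is ``similar'' to profile $010$. However, your proposal stops short exactly where profile $100$ differs, and the one concrete criterion you give is wrong. You define Case~1 by ``$c_n\le d_{n-1}$'', but that is one of the column constraints you listed yourself, so it always holds and separates nothing. The cells of layers $1,\dots,n-1$ with no right or lower neighbour inside those layers are $a_{n-1}$ and $d_{n-1}$. The layer maximum $b_n$ is forced to satisfy $b_n\le a_{n-1}$, so the correct split is $b_n\le d_{n-1}$ versus $b_n>d_{n-1}$. In the first case you subtract and land in $\text{DSPP}_{100}(n-1)$, and your numerator $1+q^{4n-1}+q^{4n-2}$ is then right.

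In Case~2 you never say that $d_{n-1}$ is removed together with $a_n,c_n,d_n$, nor what the map is. Also, ``identify that move by matching coefficients with $F_{100}(2)$'' proves nothing for general $n$. Here is what is needed.
\begin{itemize}
\item The $4n-4$ survivors are layers $1,\dots,n-1$ minus $d_{n-1}$, plus $b_n$, and they have minimum $b_n$. The removed cells satisfy $d_{n-1}\ge c_n\ge d_n$, with $a_n$ free.
\item If $a_n\le d_{n-1}$, subtract $d_{n-1}$ from the survivors. Then $b_n-d_{n-1}\ge 1$, so you land in $\text{DSPP}'_{100}(n-1)$. The three positions of $a_n$ relative to $d_n\le c_n\le d_{n-1}$ give $1,q^{4n-1},q^{4n-2}$ over $(q^{4n-3};q)_4$.
\item The extra sub-case is $a_n>d_{n-1}$. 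Here subtracting $a_n$ fails, because $b_n=a_n$ is allowed and would leave a zero in the primed cell. The missing idea is to lower $a_n$ by one, so that $a_n-1\ge d_{n-1}\ge c_n\ge d_n$ is a weak chain, and then subtract $a_n-1$ from the survivors, leaving $b_n-a_n+1\ge 1$.
\item This map is invertible, and the single decrement is the factor $q$. Equivalently, this class equals $\frac{q^{4n-3}}{(q^{4n-3};q)_4}F''_{100}(n-1)$, and $F''_{100}(n-1)=q^{-4(n-1)}F'_{100}(n-1)$.
\end{itemize}
So your guess of a one-cell move is correct, but you must name the configuration and check the inverse.

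For $F'_{100}(n)$, the factor $q^{4n}$ comes from subtracting $1$ from all $4n$ cells (each is $\ge b_{n+1}\ge 1$), not from the last layer alone. What remains has new layer the diamond $b_n\ge a_n\ge b_{n+1}$, $b_n\ge c_n\ge b_{n+1}$ with $d_n=0$. The same split then gives numerators $1+q^{4n-2}$ and $1+q^{4n-2}+q$. Finally, the case $n=1$ needs the convention $F'_{100}(0)=0$.
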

The proof is similar to Theorem \ref{thm:F010Rec}, so we skip it. Recall that $P_{100}(n):=(q;q)_{4n}F_{100}(n)$ and $P'_{100}(n):=(q;q)_{4n}F'_{100}(n)$. This implies the following.
\begin{theorem}\label{thm:P100Rec}
\begin{equation}
P_{100}(n)=(1+q^{4n-1}+q^{4n-2})P_{100}(n-1)+(1+q+q^{4n-1}+q^{4n-2})P'_{100}(n-1), 
\end{equation}
\begin{equation}
P'_{100}(n)=q^{4n}(1+q^{4n-2})P_{100}(n-1)+q^{4n}(1+q+q^{4n-2})P'_{100}(n-1).    
\end{equation}
\end{theorem}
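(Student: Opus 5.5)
The plan is to derive the statement directly from Theorem~\ref{thm:F100Rec}, in the same way that Theorem~\ref{thm:P010Rec} was obtained from Theorem~\ref{thm:F010Rec}. No new combinatorics is needed: the two recurrences for $F_{100}(n)$ and $F'_{100}(n)$ become the two recurrences for $P_{100}(n)$ and $P'_{100}(n)$ once the denominators are cleared.

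The key identity is the factorization of the $q$-Pochhammer symbol,
\[
(q;q)_{4n}=(q;q)_{4n-4}\,(q^{4n-3};q)_4 ,
\]
which holds because $(q^{4n-3};q)_4=(1-q^{4n-3})(1-q^{4n-2})(1-q^{4n-1})(1-q^{4n})$ supplies exactly the last four factors of $(q;q)_{4n}$.

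I would carry out the following steps.
\begin{enumerate}[(i)]
\item Multiply the first recurrence of Theorem~\ref{thm:F100Rec} through by $(q;q)_{4n}$. The left-hand side becomes $P_{100}(n)$ by definition.
\item On the right-hand side of that recurrence, each coefficient carries the denominator $(q^{4n-3};q)_4$, which cancels against the same factor in $(q;q)_{4n}$. What remains is $(q;q)_{4n-4}F_{100}(n-1)=P_{100}(n-1)$ and $(q;q)_{4n-4}F'_{100}(n-1)=P'_{100}(n-1)$, with the numerators $1+q^{4n-1}+q^{4n-2}$ and $1+q+q^{4n-1}+q^{4n-2}$ left untouched.
\item Apply the same multiplication and cancellation to the second recurrence. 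This yields the coefficients $q^{4n}(1+q^{4n-2})$ and $q^{4n}(1+q+q^{4n-2})$; in the first of these I am reading the unmatched parenthesis in the displayed statement of Theorem~\ref{thm:F100Rec} as $q^{4n}(1+q^{4n-2})$.
\item Check the base case $n=1$. The convention $P_{100}(0)=F_{100}(0)=1$ (the empty DSPP) and $(q;q)_0=1$ makes the same computation valid, and the initial values are consistent because $(q;q)_4F_{100}(1)=1+q^2+q^3$.
\end{enumerate}

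There is essentially no obstacle here; the derivation is routine algebra. The only point deserving care is that all the substantive content lies in Theorem~\ref{thm:F100Rec}, whose proof the paper skips as being similar to that of Theorem~\ref{thm:F010Rec}. If one wants extra assurance, I would sanity-check the resulting recurrence numerically. Evaluating at $n=1$ should reproduce $1+q^2+q^3$ from $P_{100}(0)=1$ together with the appropriate value of $P'_{100}(0)$. Evaluating at $n=2$ should reproduce the stated polynomial $(q;q)_8F_{100}(2)$. This check would also confirm the somewhat unusual $n$-independent term $1+q$ in the coefficients, which does not fit the pattern seen for the other three profiles.
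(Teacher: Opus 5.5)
Your proposal is correct and follows the paper's route exactly: the paper obtains this theorem by multiplying the two recurrences of Theorem~\ref{thm:F100Rec} by $(q;q)_{4n}=(q;q)_{4n-4}(q^{4n-3};q)_4$, and your reading of the unmatched parenthesis as $q^{4n}(1+q^{4n-2})$ is the intended one. In your $n=1$ sanity check, the ``appropriate value'' must be the convention $P'_{100}(0)=0$ (the paper uses the same convention implicitly for profile $010$), because the literal definition of $\text{DSPP}'_{100}(0)$ gives $F'_{100}(0)=q/(1-q)$, which would make the recurrence fail at $n=1$.
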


As in the previous cases, we can find a closed formula for the generating function $P_{100}(n)$. However, this time the formula is more involved, suggesting that this diagonal construction with $P_{100}(n)$ and $P'_{100}(n)$ might not be the most natural one.

\begin{theorem}\label{thm:P100_fermionic_rep}
    For $n\geq 0$, \begin{align}
        \label{P100_fermionic_rep} P_{100}(n) &= \sum_{j,k\geq 0} q^{j^2 + k^2 + j} {2 n - k-1\brack j}_{q^2} {j\brack k}_{q^2}+ q^{4n-1} \sum_{j,k\geq 0} q^{j^2 + k^2 + j} {2 n - k-2\brack j}_{q^2} {j\brack k}_{q^2}.
    \end{align}
\end{theorem}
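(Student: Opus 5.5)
We will follow the same route the paper uses for Theorems~\ref{thm:P010_fermionic_rep}, \ref{thm:P000_fermionic_rep} and \ref{thm:P001_fermionic_rep}. The idea is to show that the right-hand side of \eqref{P100_fermionic_rep} satisfies the same linear recurrence as $P_{100}(n)$ and agrees with it on enough initial values.

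\textbf{Step 1: uncouple the system.} Starting from the coupled recurrences of Theorem~\ref{thm:P100Rec}, we first eliminate $P'_{100}$. The first equation can be solved for $P'_{100}(n-1)$, since its coefficient $1+q+q^{4n-1}+q^{4n-2}$ is nonzero. Substituting this into the second equation (shifted $n\mapsto n+1$), and then back into the first, gives a single second-order recurrence in $P_{100}(n), P_{100}(n+1), P_{100}(n+2)$ with polynomial coefficients. This recurrence is the analogue of \eqref{P010_rec_uncoupled}. Uncoupling via a Gr\"obner basis / holonomic closure is an alternative if the coefficients become messy. Its initial data $P_{100}(0)=1$, $P_{100}(1)=1+q^2+q^3$ and $P_{100}(2)$ are read off from the initial values $F_{100}(1), F_{100}(2)$ multiplied by $(q;q)_{4n}$.

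\textbf{Step 2: a recurrence for the right-hand side.} Write $S(L):=\sum_{j,k\ge0} q^{j^2+k^2+j}{L-k\brack j}_{q^2}{j\brack k}_{q^2}$. The right-hand side of \eqref{P100_fermionic_rep} is then $R(n)=S(2n-1)+q^{4n-1}S(2n-2)$. Using qMultiSum (creative telescoping on the double sum), we will find a recurrence in $L$ for $S(L)$. Since the structure matches the sum in \eqref{P001_fermionic_rep} up to the inner binomial, we expect a short recurrence, presumably of order two or three. From it we derive a recurrence in $n$ for the combination $R(n)$, obtained as a closure of two shifted holonomic sequences under linear combination with coefficients $1$ and $q^{4n-1}$. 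We then verify that $R(n)$ satisfies the uncoupled recurrence from Step 1. This verification amounts to showing that the recurrence operator from Step 1, applied to $R$, reduces to a left multiple of the annihilator of $S$ rewritten in the variable $n$, or else checking it directly with qMultiSum applied to $R$ as a sum.

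\textbf{Step 3: initial values.} We check $R(0)=1$, $R(1)=S(1)+q^3S(0)=1+q^2+q^3$ and $R(2)$ against the data. Since the leading coefficient of the uncoupled recurrence is nonzero, these values, together with the recurrence, force $R(n)=P_{100}(n)$ for all $n\ge0$. At $n=0$ we interpret $S(-1)$ via the convention that the $q$-binomial vanishes, giving $R(0)=1$.

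The main obstacle is Step 2. The sum $R(n)$ is not a single hypergeometric double sum but a combination involving the parity-mixing shift $S(2n-1), S(2n-2)$, so the recurrence for $S$ in $L$ must be converted to step-two recurrences in $n$ for both $S(2n-1)$ and $S(2n-2)$. This may raise the order before the combination collapses to second order. If the direct approach is unwieldy, we would instead guess and prove a coupled pair of formulas: $R(n)$ for $P_{100}(n)$, together with a companion double-sum expression for $P'_{100}(n)$, again of the form of a shifted $S$ times a power of $q$. We would then check that this pair satisfies the first-order coupled system of Theorem~\ref{thm:P100Rec} directly, which only requires contiguous relations of $S(L)$ in $L$.
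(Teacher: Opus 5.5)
Your plan is the one the paper relies on. The paper proves \eqref{P100_fermionic_rep} only ``as in the previous cases'': uncouple Theorem~\ref{thm:P100Rec}, certify with qMultiSum that the double sums satisfy the uncoupled recurrence, and compare initial values. The companion formula in your fallback is indeed $P'_{100}(n)=q^{4n}S(2n-1)$, cf.\ \eqref{Pdoubleprimeexp}.

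\textbf{The gap is Step~3 at $n=0$, where the statement is false.} With the paper's convention that ${a\brack b}_{q^2}=0$ unless $a\ge b\ge 0$, every summand of $S(-1)$ and $S(-2)$ has a binomial with negative top entry. So $S(-1)=S(-2)=0$ and $R(0)=0$, whereas $P_{100}(0)=1$. Your sentence ``the $q$-binomial vanishes, giving $R(0)=1$'' contradicts itself. The modified binomials ${\cdot\brack\cdot}'$ of Section~\ref{sec:LinearPartition} do not rescue it either: they give $S(-1)=S(-2)=1$ and $R(0)=1+q^{-1}$. Hence \eqref{P100_fermionic_rep} fails at $n=0$. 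Combinatorially, $S(L)$ counts second little G\"ollnitz partitions with parts $\le 2L$. Then $R(n)$ splits those with parts $\le 4n-1$ according to whether $4n-1$ occurs (the remaining parts are then $\le 4n-4$), and this split is meaningless for $n=0$.

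\textbf{This also constrains the recurrence step.} Put $B_n:=1+q+q^{4n-2}+q^{4n-1}$. Eliminating $P'_{100}$ gives a recurrence in which, when the coefficient of $P_{100}(n+1)$ is $B_n$, the coefficient of $P_{100}(n-1)$ is $q^{8n}B_{n+1}\neq 0$. We have $R(1)=S(1)+q^3S(0)=P_{100}(1)$ and $R(2)=S(3)+q^7S(2)=P_{100}(2)$, but $R(0)\neq P_{100}(0)$. So $R$ violates the instance linking indices $0,1,2$. No certificate for $R$ can hold there, and ``initial values plus recurrence'' cannot give equality from $n=0$.

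\textbf{The repair} is to prove the statement for $n\ge 1$:
\begin{enumerate}
\item Use $n=1,2$ as initial values.
\item Verify the recurrence for $R$ only on instances linking $R(n-1),R(n),R(n+1)$ with $n\ge 2$.
\end{enumerate}
This also avoids the case $n=1$ of Theorem~\ref{thm:P100Rec}. That case involves $P'_{100}(0)$ and is correct only if one sets $P'_{100}(0)=0$; the literal definition of $\text{DSPP}'_{100}(0)$ (only $b_1>0$) gives $q/(1-q)$.

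The same shift is needed in your fallback. Feeding $R(0)=0$ and $S(-1)=0$ into the coupled system would force $R(1)=0$. So check $R(1)$ and $P'_{100}(1)=q^4S(1)$ directly, and run the coupled verification from $n=2$ on.
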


The formula in Theorem~\ref{thm:P100_fermionic_rep} comes in two pieces. As $L\rightarrow\infty$ the second part vanishes and the first part converges to the generating function for the number of partitions that satisfy the gap conditions of the second little G\"ollnitz identity by \cite[Theorem 4.6]{doublesums}. This proves \eqref{DSPP_littleG2}.

\section{The palindromic pattern}\label{sec:Palindromity}

In this section, we discuss the palindromic pattern in the polynomials. Giving a polynomial $p(x)$, let $\text{deg}(p(x))$ be the degree of $p(x)$. A polynomial is called palindromic if
$$p(x)=x^{\text{deg}(p(x))}p(x^{-1}).$$
Note that for profile $010$, $000$, and $001$, the initial values we gave in previous sections are all palindromic. This turns out to be true for all $n$.
\begin{theorem}\label{thm:Pc Palindromic}
For any $n\geq0$, $P_{010}(n)$, $P_{000}(n)$, and $P_{001}(n)$ are all palindromic. Namely,
$$P_c(n;q)=q^{\text{deg}(P_c(n;q))}P_c(n;q)^{-1}$$
for $c=010$, $000$, and $001$.
\end{theorem}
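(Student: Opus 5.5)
The plan is to prove the palindromicity from the uncoupled three-term recurrences of Sections~\ref{sec:010}, \ref{sec:000} and \ref{sec:001}, not from the combinatorics. First, the degrees. The initial values give $\deg P_{010}(1)=4$ and $\deg P_{010}(2)=16$, $\deg P_{000}(1)=0$ and $\deg P_{000}(2)=8$, $\deg P_{001}(1)=2$ and $\deg P_{001}(2)=12$. These suggest
\[
d_{010}(n)=4n^2,\qquad d_{000}(n)=4n^2-4n,\qquad d_{001}(n)=4n^2-2n .
\]
For each profile $c$ we then define the reflected sequence $Q_c(n;q):=q^{d_c(n)}P_c(n;q^{-1})$. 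The claim reduces to $Q_c(n)=P_c(n)$ for all $n$.

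To see why the reduction suffices: $P_c(n)$ has constant term $1$, coming from the empty DSPP. Hence once $Q_c=P_c$, the polynomial $P_c(n)$ has leading term $q^{d_c(n)}$, so $\deg P_c(n)=d_c(n)$, and $Q_c=P_c$ is exactly the palindromic identity.

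The key step is to check that $Q_c(n)$ satisfies the same uncoupled recurrence as $P_c(n)$. For profile $010$, substitute $q\mapsto q^{-1}$ in \eqref{P010_rec_uncoupled} and write $P_{010}(m;q^{-1})=q^{-4m^2}Q_{010}(m)$. Every coefficient turns out to be, up to a monomial, a palindromic polynomial:
\begin{itemize}
\item $1+q^{-7-4n}=q^{-7-4n}(1+q^{7+4n})$;
\item $1+q^{-5-4n}=q^{-5-4n}(1+q^{5+4n})$;
\item $1+q^{-3-4n}=q^{-3-4n}(1+q^{3+4n})$;
\item the hexanomial $1+q^{3+4n}+q^{4+4n}+q^{6+4n}+q^{7+4n}+q^{10+8n}$ is itself palindromic of degree $10+8n$.
\end{itemize}
Collecting exponents, the three terms acquire the factors $q^{-13-12n-4n^2}$, $q^{-19-20n-4n^2}$ and $q^{-19-20n-4n^2}$ respectively. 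The first differs from the others by exactly $q^{6+8n}$, which is the original leading monomial. So after multiplying through by $q^{19+20n+4n^2}$ we recover \eqref{P010_rec_uncoupled} verbatim for $Q_{010}$. The same bookkeeping applies to the uncoupled recurrence for $P_{000}$ with $d=4n^2-4n$, and to \eqref{P001_rec_uncoupled} with $d=4n^2-2n$. In each case the factors are of the shape $1+q^{a+4n}$ and a palindromic hexanomial, and the one thing to verify is that the exponent shifts agree.

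Finally, the coefficient of $P_c(n+2)$ in each recurrence, namely $1+q^{3+4n}$, $1+q^{1+4n}$ or $1+q^{2+4n}$, is nonzero. So the sequence is determined by two consecutive initial values. Palindromicity of the explicitly listed $P_c(1)$ and $P_c(2)$, which one reads off directly, gives $Q_c=P_c$ at $n=1,2$, and induction finishes the proof; the $n=0$ case is trivial since $P_c(0)=1$. The main obstacle I expect is the exponent bookkeeping for the guessed degree $d_c(n)$. If the shifts failed to match for $000$ or $001$, I would instead reflect the coupled system of Theorem~\ref{thm:RecP000} (respectively its $001$ analogue) jointly with a suitably shifted $P'_c$.
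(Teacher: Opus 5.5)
Your proposal is correct, and it takes a genuinely different route from the paper.

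\textbf{The paper's route.} It works from the fermionic double sums. For $010$ it computes the degree $4nj-j^2-k^2$ of each summand of \eqref{P010_fermionic_rep} and reads off $\deg P_{010}(n)=4n^2$. It then shows that the reindexing $j\mapsto 2n-j$ turns $q^{4n^2}P_{010}(n;q^{-1})$ back into the same sum term by term. It asserts that the same argument works for $000$ (it does, via $j\mapsto 2n-2-j$) and for $001$.

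\textbf{Your route.} You never use the closed forms. You show that $P\mapsto q^{d(n)}P(n;q^{-1})$ preserves the solution space of the uncoupled three-term recurrence, match two initial values, and get the degree from the constant term. The exponent shifts you left to check do agree. For $000$ the three terms pick up $q^{-7-8n-4n^2}$, $q^{-9-16n-4n^2}$ and $q^{-9-16n-4n^2}$. For $001$ they pick up $q^{-10-10n-4n^2}$, $q^{-14-18n-4n^2}$ and $q^{-14-18n-4n^2}$. These differ by exactly $q^{2+8n}$ and $q^{4+8n}$, as required.

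\textbf{What each buys.} The paper's argument exhibits an explicit involution on summands, which is a structural reason for the symmetry. However, it presupposes the fermionic forms, which were themselves proved via these same recurrences. It also does not carry over verbatim to $001$. Because of the factor ${j+1\brack k}_{q^2}$ in \eqref{P001_fermionic_rep}, the map $j\mapsto 2n-1-j$ does not permute the summands. Already at $n=1$, the summand $q$ at $(j,k)=(0,1)$ is sent to the vanishing summand at $(1,1)$.

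Your argument treats all three profiles uniformly. It also immediately yields palindromicity of $P''_{100}(n)$, which satisfies the same recurrence as $P_{001}(n)$, with the same $d(n)=4n^2-2n$, and has palindromic initial values $P''_{100}(1)$, $P''_{100}(2)$ of degrees $2$ and $12$. What you give up is any explanation of the symmetry beyond verification.
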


\begin{proof}
We first treat the profile $010$. By Theorem~\ref{thm:P010_fermionic_rep},
$$P_{010}(n;q)=\sum_{j,k\geq0}q^{j^2+k^2}{2n-k\brack j}_{q^2}{j\brack k}_{q^2}.$$
Note that the $q$-binomial coefficient
$${n\brack m}_{q}:=\frac{(q;q)_n}{(q;q)_m(q;q)_{m-m}}$$
is always palindromic and has degree $mn$, so
$${m+n\brack m}_{q}=q^{mn}{m+n\brack m}_{q^{-1}}.$$
Meanwhile, for each term in the summation of $P_{010}(n;q)$, the degree is
$$(j^2+k^2)+2(2n-k-j)j+2(j-k)k=4nj-j^2-k^2.$$
This will reach the largest value when $j=2n$ and $k=0$, so $\text{deg}(P_{010}(n;q))=4n^2$.
Now,
\begin{align*}
q^{\text{deg}(P_{010}(n;q))}P_{010}(n;q^{-1})=&q^{4n^2}\sum_{j,k\geq0}q^{-j^2-k^2}{2n-k\brack j}_{q^{-2}}{j\brack k}_{q^{-2}}\\
=&\sum_{j,k\geq 0} q^{4n^2-j^2  -k^2-2(2n-k-j)j-2(j-k)k} {2 n - k\brack j}_{q^2} {j\brack k}_{q^2}\\
=&\sum_{j,k\geq 0} q^{4n^2-4nj+j^2+k^2}  {2 n - k\brack j}_{q^2} {j\brack k}_{q^2}\\
=&\sum_{j,k\geq 0} q^{(2n-j)^2+k^2}  {2 n - k\brack j}_{q^2} {j\brack k}_{q^2}\\
=&\sum_{m,k\geq0}q^{m^2+k^2}{2 n - k\brack 2n-m}_{q^2} {2n-m\brack k}_{q^2}\\
=&\sum_{m,k\geq0}q^{m^2+k^2}\frac{(q^2;q^2)_{2n-k}}{(q^2;q^2)_{2n-m}(q^2;q^2)_{m-k}}\cdot\frac{(q^2;q^2)_{2n-m}}{(q^2;q^2)_{k}(q^2;q^2)_{2n-m-k}}\\
=&\sum_{m,k\geq0}q^{m^2+k^2}\frac{(q^2;q^2)_{2n-k}}{(q^2;q^2)_{m-k}}\cdot\frac{1}{(q^2;q^2)_{k}(q^2;q^2)_{2n-m-k}}\\
=&\sum_{m,k\geq0}q^{m^2+k^2}\frac{(q^2;q^2)_{2n-k}}{(q^2;q^2)_m(q^2;q^2)_{2n-m-k}}\cdot\frac{(q^2;q^2)_m}{(q^2;q^2)_{k}(q^2;q^2)_{m-k}}\\
=&\sum_{m,k\geq0}q^{m^2+k^2}{2n-k\brack m}_{q^2}{m\brack k}_{q^2}= P_{010}(n;q).
\end{align*}
So, we have shown that $P_{010}(n)$ is palindromic. For $P_{000}(n)$ and $P_{001}(n)$, it suffices to apply the same argument on \eqref{P000_fermionic_rep} and \eqref{P001_fermionic_rep}. So, we finish the proof.
\end{proof}

Unfortunately, the palindromic pattern does not hold for $P_{100}(n;q)$; we shall fix this by cutting the diagram in a different way. Recall that we considered the following diagrams, which correspond to $\text{DSPP}_{100}(n)$ and $\text{DSPP}'_{100}(n)$, respectively.
\begin{center}
\begin{ytableau}
b_1 & c_1 & d_1\\
a_1 & b_2 & c_2 & d_2\\
\none & a_2 & \none[\ddots] & \none[\ddots] & \none[\ddots]\\
\none & \none & \none[\ddots] & b_{n} & c_{n} & d_{n}\\
\none & \none & \none & a_{n} 
\end{ytableau}
\hspace{2cm}
\begin{ytableau}
b_1 & c_1 & d_1\\
a_1 & b_2 & c_2 & d_2\\
\none & a_2 & \none[\ddots] & \none[\ddots] & \none[\ddots]\\
\none & \none & \none[\ddots] & b_{n} & c_{n} & 0\\
\none & \none & \none & a_{n} & b_{n+1} 
\end{ytableau}
\end{center}
We further define $\text{DSPP}''_{100}(n)$ to be the set of partitions in $\text{DSPP}_{100}$ with $a_{n+1}=b_{n+2}=c_{n+1}=d_{n}=0$. In other words, $\text{DSPP}''_{100}(n)$ contains partitions represented by the same diagram $\text{DSPP}'_{100}(n)$, except that $b_{n+1}$ is no longer necessarily greater than $0$. Let 
$$F''_{100}(n):=F''_{100}(n;q):=\sum_{\Lambda\in\text{DSPP}''_{100}(n)}q^{|\Lambda|}\quad\text{and}\quad P''_{100}(n):=P''_{100}(n):=(q;q)_{4n}F''_{100}(n).$$
It is clear that $F''_{100}(n)=F'_{100}(n)/q^{4n}$, hence $P''_{100}(n)=P'_{100}(n)/q^{4n}$. By Partition Analysis, we have the crude form and initial values as follows.
\begin{theorem}
\begin{equation}
\begin{split}
F''_{100}(n)=&\underset{\geq}{\Omega}\frac{1}{(1-q\lambda_{1,1}\mu_{1,1})}\left(1-\frac{q\lambda_{1,2}\mu_{1,2}}{\lambda_{1,1}}\right)^{-1}\prod_{i=2}^{n}\left(1-\frac{q\lambda_{i,i}\mu_{i,i}}{\lambda_{i,i-1}\mu_{i-1,i}}\right)^{-1}\left(1-\frac{q\lambda_{i,i+1}\mu_{i,i+1}}{\lambda_{i,i}\mu_{i-1,i+1}}\right)^{-1}\\
&\times\prod_{i=1}^{n}\left(1-\frac{q\lambda_{i+1,i}}{\mu_{i,i}}\right)^{-1}\left(1-\frac{q\mu_{i,i+2}}{\lambda_{i,i+1}}\right)^{-1}.
\end{split}
\end{equation}  
\end{theorem}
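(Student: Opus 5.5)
The crude form for $F''_{100}(n)$ is obtained exactly as for $F_{010}(n)$ in Section~\ref{sec:PA}. The work is to write down the inequalities encoding the diagram of $\text{DSPP}''_{100}(n)$ and to check which boundary variables become trivial.

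\textbf{Step 1: label the entries.} I label the entries as in the right diagram above. These are $a_i$ for $1\le i\le n$, $b_i$ for $1\le i\le n+1$, $c_i$ for $1\le i\le n$, and $d_i$ for $1\le i\le n-1$. I impose $a_{n+1}=b_{n+2}=c_{n+1}=d_n=0$, but I no longer require $b_{n+1}>0$. Because of this relaxation, the summation over every variable runs over all non-negative integers. So each variable contributes a geometric factor $(1-q\cdot(\text{monomial}))^{-1}$, with no extra $q$-shift.

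\textbf{Step 2: encode the inequalities.} For each cell I record two inequalities, each as an exponent of an auxiliary variable.
\begin{itemize}
\item Row weak decrease uses $\lambda$'s: $b_i-c_i\ge0$, $c_i-d_i\ge0$, and the row $a_{i}\ge b_{i+1}$.
\item Column weak decrease uses $\mu$'s: $b_i\ge a_i$ (from the profile bit $1$), $c_i\ge b_{i+1}$, and $d_i\ge c_{i+1}$.
\end{itemize}
I attach $\lambda_{\cdot,\cdot}$ and $\mu_{\cdot,\cdot}$ with the same indexing convention used in the crude form of $F_{100}(n)$. Then I collect, for each variable, the monomial of $\lambda$'s and $\mu$'s multiplying its power of $q$. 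Summing each geometric series gives the product form. The interior factors for $2\le i\le n$ coincide with those in the crude form of $F_{100}(n)$, because the local inequalities are unchanged.

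\textbf{Step 3: the boundary factors.} This is the only place where the formula differs from the $F_{100}(n)$ case. Since $d_n=0$, the new last cell $b_{n+1}$ sits below $c_n$ and to the right of $a_n$. It therefore contributes a factor with $\mu$ in the numerator and $\lambda$ in the denominator, corresponding to $c_n\ge b_{n+1}$ and $a_n\ge b_{n+1}$.

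In the displayed formula, the last product over $1\le i\le n$ has factors $\left(1-\frac{q\lambda_{i+1,i}}{\mu_{i,i}}\right)^{-1}$ and $\left(1-\frac{q\mu_{i,i+2}}{\lambda_{i,i+1}}\right)^{-1}$, with some auxiliary variables dropped. I would justify each dropped variable by one of two facts:
\begin{itemize}
\item the corresponding inequality has a zero on one side, such as $d_i\ge0$ or $a_i\ge0$, so it holds automatically; or
\item the eliminated variable appears only with non-negative exponent, so rule \eqref{eq:Elimination1} lets us set it to $1$.
\end{itemize}

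\textbf{Main obstacle.} The main obstacle is bookkeeping: matching each factor to the correct cell and checking that the index shift $b_{n+1}$ is absorbed into the $i=n$ terms of the products. I would test the result on $n=1$. Here the crude form should eliminate, via rules \eqref{eq:Elimination1}--\eqref{eq:Elimination3}, to $F'_{100}(1)/q^{4}$. The recurrence in Theorem~\ref{thm:F100Rec} gives this value, so the check confirms the formula.
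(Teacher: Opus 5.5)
\textbf{Verdict: there is a genuine gap.} The bookkeeping you defer to Step 3 does not work out, and your $n=1$ check would refute the displayed formula, not confirm it.

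Steps 1--2 are the intended route; the paper gives no argument beyond ``By Partition Analysis''. Your labelling of the $4n$ cells as $a_1,\dots,a_n$, $b_1,\dots,b_{n+1}$, $c_1,\dots,c_n$, $d_1,\dots,d_{n-1}$, with one $\lambda$ and one $\mu$ per cell, is correct. The trouble is the match with the printed product.

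\textbf{What the display actually encodes.}
\begin{itemize}
\item The variable $\lambda_{n+1,n}$ occurs only in the numerator of $(1-q\lambda_{n+1,n}/\mu_{n,n})^{-1}$, so $a_n\ge b_{n+1}$ is not encoded.
\item The variable $\mu_{n,n+1}$ occurs only in the numerator of the $c_n$-factor, so $c_n\ge b_{n+1}$ is not encoded either. There is no factor for $b_{n+1}$ at all.
\item The $i=n$ term $(1-q\mu_{n,n+2}/\lambda_{n,n+1})^{-1}$ is the factor of a free cell $d_n\le c_n$. In $\text{DSPP}''_{100}(n)$ that cell is $0$.
\item The only other differences from the crude form of $F_{100}(n)$ are the missing $\mu_{i+1,i}$ and $\lambda_{i,i+2}$. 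These occur there only with non-negative exponents, so \eqref{eq:Elimination1} removes them.
\end{itemize}
Hence the display equals $F_{100}(n)$, not $F''_{100}(n)$.

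\textbf{The $n=1$ check.} At $n=1$ the display counts $b_1\ge c_1\ge d_1\ge0$ with $b_1\ge a_1\ge0$. This gives $(1+q^2+q^3)/(q;q)_4$, whose coefficient of $q^3$ is $5$. By contrast, $\text{DSPP}''_{100}(1)$ is the diamond $b_1\ge a_1\ge b_2\ge0$, $b_1\ge c_1\ge b_2$. It has $F''_{100}(1)=(1+q^2)/(q;q)_4=F'_{100}(1)/q^4$, with coefficient of $q^3$ equal to $4$.

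\textbf{Sign slip in Step 3.} The cell $b_{n+1}$ is the smaller side of both $a_n\ge b_{n+1}$ and $c_n\ge b_{n+1}$. So $\lambda_{n+1,n}$ and $\mu_{n,n+1}$ both go in the denominator of its factor, $\bigl(1-q\lambda_{n+1,n+1}\mu_{n+1,n+1}/(\lambda_{n+1,n}\mu_{n,n+1})\bigr)^{-1}$. It is not ``$\mu$ in the numerator, $\lambda$ in the denominator''. The numerator variables are removable because $b_{n+1}$ has no right or lower neighbour.

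\textbf{What your argument does prove.} Carried out faithfully, Steps 1--3 give a corrected crude form. In it the $b$-factors run up to $i=n+1$ and the $d$-factors stop at $i=n-1$:
\begin{align*}
F''_{100}(n)=\underset{\geq}{\Omega}\,&\frac{1}{1-q\lambda_{1,1}\mu_{1,1}}\left(1-\frac{q\lambda_{1,2}\mu_{1,2}}{\lambda_{1,1}}\right)^{-1}\prod_{i=2}^{n+1}\left(1-\frac{q\lambda_{i,i}\mu_{i,i}}{\lambda_{i,i-1}\mu_{i-1,i}}\right)^{-1}\\
&\times\prod_{i=2}^{n}\left(1-\frac{q\lambda_{i,i+1}\mu_{i,i+1}}{\lambda_{i,i}\mu_{i-1,i+1}}\right)^{-1}\prod_{i=1}^{n}\left(1-\frac{q\lambda_{i+1,i}}{\mu_{i,i}}\right)^{-1}\prod_{i=1}^{n-1}\left(1-\frac{q\mu_{i,i+2}}{\lambda_{i,i+1}}\right)^{-1}.
\end{align*}
At $n=1$ this reduces to the diamond above. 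The printed statement should be read as a misprint of this formula. No choice of which auxiliary variables to drop will make the printed product equal $F''_{100}(n)$.
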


\begin{theorem}
\begin{equation}
F''_{100}(1)=\frac{1}{(q;q)_4}(1+q^2),  
\end{equation}
\begin{equation}
F''_{100}(2)=\frac{1}{(q;q)_8}(1+q^2+q^3+q^4+q^5+2 q^6+q^7+q^8+q^9+q^{10}+q^{12}).   
\end{equation}
\end{theorem}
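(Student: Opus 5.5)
We obtain both initial values the same way as $F_{010}(1)$ in Section~\ref{sec:PA}: specialize the crude form of $F''_{100}(n)$ to $n=1,2$ and eliminate the $\lambda$'s and $\mu$'s one at a time with \eqref{eq:Elimination1}--\eqref{eq:Elimination3}. As an independent check we count $P$-partitions of the underlying poset.

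\textbf{The case $n=1$.} By definition of $\text{DSPP}''_{100}(1)$ we have $a_2=b_3=c_2=d_1=0$. The surviving entries are therefore $b_1,c_1,a_1,b_2$, subject to:
\begin{itemize}
\item the row conditions $b_1\ge c_1$ and $a_1\ge b_2$;
\item the column conditions $b_1\ge a_1$ and $c_1\ge b_2$.
\end{itemize}
Each of the four linear forms in the crude product then carries a $\lambda$ or $\mu$ that appears in at most one denominator. Rule \eqref{eq:Elimination1} removes the variables that occur only in numerators. Rule \eqref{eq:Elimination2} removes those that pair one numerator factor with one denominator factor. This leaves a single application of \eqref{eq:Elimination3}, exactly as in the computation of $F_{010}(1)$. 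We expect the result $(1+q^2)/(q;q)_4$.

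The cross-check uses Stanley's theory of $P$-partitions. The poset has $b_1$ as its unique maximum, and below it sit the two covers $c_1$ and $a_1$, both of which lie above $b_2$. Its only two linear extensions are $b_1c_1a_1b_2$ and $b_1a_1c_1b_2$. Labeling naturally by the extension $b_1c_1a_1b_2$, the second extension has a single descent at position $2$. The generating function is therefore $(1+q^{2})/(q;q)_4$, confirming the first identity.

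\textbf{The case $n=2$.} Here $a_3=b_4=c_3=d_2=0$, leaving the eight entries $b_1,c_1,d_1,a_1,b_2,c_2,a_2,b_3$. The elimination is the same in kind but longer. We would carry it out with the Mathematica package \cite{OmegaPackage}, as the paper does for the other initial values.

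For a hand verification we would again use $P$-partitions:
\begin{itemize}
\item list the linear extensions of the $8$-element poset cut out by the row and column inequalities;
\item compute the major index of each relative to a fixed natural labeling;
\item check that the resulting numerator over $(q;q)_8$ is $1+q^2+q^3+q^4+q^5+2q^6+q^7+q^8+q^9+q^{10}+q^{12}$.
\end{itemize}
Two sanity checks apply: this polynomial has $13$ terms counted with multiplicity, so there should be exactly $13$ linear extensions, and it is palindromic of degree $12$.

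\textbf{Main obstacle.} The main obstacle is bookkeeping in the $n=2$ case: getting the crude form's variables right at the boundary where $d_2=0$ and $b_3$ is free, and not miscounting linear extensions. We would rely on the computer-algebra evaluation.

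As a further consistency check, $F''_{100}(n)=F'_{100}(n)/q^{4n}$. The relation for $F'_{100}(n)$ in Theorem~\ref{thm:F100Rec}, fed with the already-computed values of $F_{100}$ and $F''_{100}$ at $n-1$, should reproduce the $n=2$ value.
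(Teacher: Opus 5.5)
Your route is the paper's: there the initial values are read off by evaluating the Omega expressions, by computer for $n=2$. Your $P$-partition cross-check is a sound, fully elementary alternative. The $n=1$ part is right as written: the diamond $b_1\ge c_1,a_1\ge b_2$ has two linear extensions, with major indices $0$ and $2$.

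Two things need fixing.

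\textbf{The sanity count.} The coefficients of $1+q^2+q^3+q^4+q^5+2q^6+q^7+q^8+q^9+q^{10}+q^{12}$ sum to $12$, not $13$. The $8$-element poset has exactly $12$ linear extensions: $b_1$ comes first, $b_3$ last, both of $c_1,a_1$ precede $b_2$, and $d_1$ lies between $c_1$ and $c_2$. Six of them have $d_1$ before $b_2$ and six after. With the natural labeling $b_1,c_1,a_1,d_1,b_2,c_2,a_2,b_3\mapsto 1,\dots,8$, their major indices are $0,2,3,4,5,6,6,7,8,9,10,12$. This reproduces the stated numerator exactly. As you wrote it, your check would have signalled a discrepancy that does not exist.

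\textbf{The crude form.} Do not literally specialize the crude form displayed for $F''_{100}(n)$. Up to variables occurring only in numerators, it is the crude form of $F_{100}(n)$. At $n=1$ it has a factor for the cell $d_1$ (via $\mu_{1,3}/\lambda_{1,2}$) and none for $b_2$. It therefore returns $(1+q^2+q^3)/(q;q)_4=F_{100}(1)$, not the value you want. Build the crude form from your own (correct) inequalities instead. The factors are $q\lambda_{1,1}\mu_{1,1}$, $q\mu_{1,2}/\lambda_{1,1}$, $q\lambda_{2,1}/\mu_{1,1}$ and $q/(\lambda_{2,1}\mu_{1,2})$. Eliminate $\lambda_{1,1}$, $\lambda_{2,1}$, $\mu_{1,2}$ by \eqref{eq:Elimination2}, then $\mu_{1,1}$ by \eqref{eq:Elimination3}. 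This gives $1/((1-q)(1-q^2)^2(1-q^3))=(1+q^2)/(q;q)_4$, which is exactly the sequence of eliminations you described.

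Finally, your recurrence check does settle the $n=2$ case by hand, granting Theorem~\ref{thm:F100Rec}. Feed in $F_{100}(1)=(1+q^2+q^3)/(q;q)_4$ and $F'_{100}(1)=q^4(1+q^2)/(q;q)_4$. Then $q^{-8}F'_{100}(2)$ has numerator $(1+q^6)(1+q^2+q^3)+q^4(1+q^2)(1+q+q^6)$ over $(q;q)_8$. This expands to the stated polynomial.
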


By Theorem~\ref{thm:P100Rec}, we have the following.
\begin{theorem} The sequence $P''_{100}(n)$ satisfies the following recurrence.
   \begin{align}\nonumber q^{4 + 8 n} (1 + q^{6 + 4 n}) P''_{100}(n) - (1 + q^{4 + 4 n}) (1 + q^{2 + 4 n} + q^{3 + 4 n} + q^{     5 + 4 n} &+ q^{6 + 4 n} + q^{8 + 8 n}) P''_{100}(n+1) \\\label{eq_P''rec}&+ (1 + q^{2 + 4 n}) P''_{100}(n+2) = 0\end{align}
\end{theorem}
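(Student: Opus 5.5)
The plan is to uncouple the first-order system of Theorem~\ref{thm:P100Rec}, this time eliminating $P_{100}$ in favour of $P''_{100}$, in the same way \eqref{P010_rec_uncoupled} was obtained from Theorem~\ref{thm:P010Rec}. Write the system as
\[
P_{100}(n)=A_nP_{100}(n-1)+B_nP'_{100}(n-1),\qquad P'_{100}(n)=q^{4n}\bigl(C_nP_{100}(n-1)+D_nP'_{100}(n-1)\bigr),
\]
where $A_n,B_n,C_n,D_n$ are the polynomial coefficients read off from Theorem~\ref{thm:P100Rec}. Substituting $P'_{100}(m)=q^{4m}P''_{100}(m)$ turns this into
\[
P_{100}(n)=A_nP_{100}(n-1)+q^{4n-4}B_nP''_{100}(n-1),\qquad P''_{100}(n)=C_nP_{100}(n-1)+q^{4n-4}D_nP''_{100}(n-1).
\]

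The key step is that $C_n=1+q^{4n-2}$ is a nonzero polynomial. So the second equation can be solved for $P_{100}(n-1)$:
\[
P_{100}(n-1)=\frac{P''_{100}(n)-q^{4n-4}D_nP''_{100}(n-1)}{C_n}.
\]
Shifting $n\mapsto n+1$ also expresses $P_{100}(n)$ through $P''_{100}(n+1)$ and $P''_{100}(n)$. Plug both expressions into the first equation. This gives a three-term relation among $P''_{100}(n+1)$, $P''_{100}(n)$ and $P''_{100}(n-1)$ with rational coefficients. After clearing the denominators $C_nC_{n+1}$ and shifting $n\mapsto n+1$, it becomes a relation among $P''_{100}(n)$, $P''_{100}(n+1)$ and $P''_{100}(n+2)$.

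Next, normalize so that the leading coefficient is $1+q^{2+4n}$ and compare the remaining two coefficients with those in \eqref{eq_P''rec}. The target recurrence coincides with the uncoupled recurrence \eqref{P001_rec_uncoupled} for $P_{001}(n)$. This is a useful consistency check, and it suggests that the middle coefficient should factor as $(1+q^{4+4n})(1+q^{2+4n}+q^{3+4n}+q^{5+4n}+q^{6+4n}+q^{8+8n})$. The identity of coefficients is a finite polynomial computation, which I would confirm by computer algebra (e.g.\ a Gr\"obner-basis or holonomic elimination, as mentioned after \eqref{P'010_rec_uncoupled}).

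The main obstacle is that the coefficients printed in Theorems~\ref{thm:F100Rec} and \ref{thm:P100Rec} look suspicious (the stray factor $1+q$ does not scale with $n$, and a parenthesis is unbalanced). The elimination will only reproduce \eqref{eq_P''rec} once the correct coefficients are in hand. Before eliminating, I would therefore check the coupled system against the computed initial data. From $P_{100}(0)=1$ and $P'_{100}(0)=0$, the system must return $P_{100}(1)=1+q^2+q^3$ and $P''_{100}(1)=1+q^2$. It must also return the stated degree-$13$ and degree-$12$ polynomials at $n=2$. If a discrepancy appears, I would rederive the affected coefficient from the Omega-operator crude forms for $n\le 3$ and repair it.

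Finally, I would verify \eqref{eq_P''rec} directly on the values $P''_{100}(0),P''_{100}(1),P''_{100}(2)$ and a computed $P''_{100}(3)$. This confirms that the eliminated recurrence is the stated one, and it supplies the initial conditions needed later when matching \eqref{eq_P''rec} with a closed form via \cite{qMultiSum}.
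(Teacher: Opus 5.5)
Your proposal is correct and follows the paper's route. The paper obtains \eqref{eq_P''rec} simply by uncoupling the system of Theorem~\ref{thm:P100Rec} after writing $P'_{100}(n)=q^{4n}P''_{100}(n)$, exactly as you plan, and the printed coefficients need no repair: with $A_n=1+q^{4n-1}+q^{4n-2}$, $B_n=1+q+q^{4n-1}+q^{4n-2}$, $C_n=1+q^{4n-2}$, $D_n=1+q+q^{4n-2}$ one has $A_nD_n-B_nC_n=q^{4n-1}\cdot q=q^{4n}$, which gives the trailing coefficient $q^{4+8n}(1+q^{6+4n})$ directly, the middle coefficient $q^{4n+4}C_{n+1}D_{n+2}+A_{n+1}C_{n+2}$ expands to exactly $(1+q^{4+4n})(1+q^{2+4n}+q^{3+4n}+q^{5+4n}+q^{6+4n}+q^{8+8n})$, and the system reproduces the stated data at $n=1,2$ (the constant $q$ is genuine; only the unbalanced parenthesis in Theorem~\ref{thm:F100Rec} is a typo).
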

Note that \eqref{eq_P''rec} is the same as \eqref{P001_rec_uncoupled}. This is expected as the gap conditions on these partition theorems are the same, and only the initial conditions differ. This indicates that this is a more natural truncation of this generating over the one in Section~\ref{sec:100}.

By solving this recurrence, we get the closed form for $P''_{100}(n)$.
\begin{theorem}
\begin{equation}\label{Pdoubleprimeexp}
    P''_{100}(n)= \sum_{j,k\geq 0} q^{j^2 + k^2 + j} {2 n - k-1\brack j}_{q^2} {j\brack k}_{q^2},
\end{equation}
and $P''_{100}(n)$ is palindromic for all $n$.
\end{theorem}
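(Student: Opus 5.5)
Two things need proof: the closed form \eqref{Pdoubleprimeexp} and the palindromicity. For the closed form I will follow the paper's pattern and show that both sides satisfy the recurrence \eqref{eq_P''rec} with matching initial values. Denote the right-hand side of \eqref{Pdoubleprimeexp} by $S(n)$.

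First I check the initial values. At $n=0$ the binomial ${-k-1\brack j}_{q^2}$ vanishes unless we interpret it as zero, so $S(0)=0$. This is not quite right, because $P''_{100}(0)$ should be $1$: the diagram for $n=0$ is the single cell $b_1$, whose generating function $1/(1-q)$ is not polynomial. So I use $n=1$ and $n=2$ as the two initial conditions, since the recurrence is of order two. At $n=1$ we get $S(1)=\sum q^{j^2+k^2+j}{1-k\brack j}{j\brack k}=1+q^2$, which matches $(q;q)_4F''_{100}(1)$. At $n=2$, expanding the finitely many terms ($j\le 3$) should reproduce $1+q^2+q^3+q^4+q^5+2q^6+q^7+q^8+q^9+q^{10}+q^{12}$; this is a routine finite check.

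The recurrence for $S(n)$ comes for free. The summand of $S(n)$ is identical to the first sum in \eqref{P100_fermionic_rep}, and by \cite[Theorem 4.6]{doublesums} it is a truncated little G\"ollnitz generating function. I would certify directly with the qMultiSum package \cite{qMultiSum} that $S(n)$ satisfies \eqref{eq_P''rec}. A creative-telescoping certificate in the two summation variables $j,k$ is the standard route. Alternatively, I could observe that \eqref{eq_P''rec} coincides with \eqref{P001_rec_uncoupled}, which is already known to annihilate the closely related sum \eqref{P001_fermionic_rep}; shifting $n$ by one in the certificate structure is unlikely to transfer directly, though, so I would just run the automated proof.

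The recurrence for $P''_{100}(n)$ is the preceding theorem. Since the leading coefficient $1+q^{2+4n}$ never vanishes, the two sequences agree for all $n\ge1$, and I would add $n=0$ by convention. Finding the telescoping certificate is the main obstacle, but it is mechanical.

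For palindromicity I mimic the proof of Theorem~\ref{thm:Pc Palindromic}. Using ${a\brack b}_{q^{-2}}=q^{-2b(a-b)}{a\brack b}_{q^2}$, the $(j,k)$ term has degree
\[
j^2+k^2+j+2j(2n-k-1-j)+2k(j-k)=4nj-j^2-k^2-j.
\]
This is maximized at $j=2n-1$, $k=0$, giving $D=4n^2-1$ (the term $j=2n$ is not allowed, since $2n-k-1\ge j$). Then $q^{D}S(n;q^{-1})$ has exponents
\[
4n^2-1-4nj+j^2+k^2+j=(2n-1-j)^2+(2n-1-j)+k^2,
\]
once we check that $4n^2-1-4nj+j^2+j$ equals $(2n-1-j)^2+(2n-1-j)$. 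Writing $m=2n-1-j$, the binomial product ${2n-1-k\brack 2n-1-m}{2n-1-m\brack k}$ rewrites, by the same factorial shuffle as in the $010$ case, into ${2n-1-k\brack m}{m\brack k}$. This returns $S(n;q)$, so $P''_{100}(n)$ is palindromic of degree $4n^2-1$.
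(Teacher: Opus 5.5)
Your route is the paper's own: certify with qMultiSum that the double sum $S(n)$ satisfies \eqref{eq_P''rec}, match initial values, and rerun the argument of Theorem~\ref{thm:Pc Palindromic}. The closed-form half is fine: the values at $n=1,2$ are correct, and the nonvanishing leading coefficient $1+q^{4n+2}$ gives uniqueness.

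The palindromicity half, however, fails as written because of an arithmetic slip in the degree. Your per-term degree $4nj-j^2-k^2-j$ is right, but at $j=2n-1$, $k=0$ it equals
\[
4n(2n-1)-(2n-1)^2-(2n-1)=2n(2n-1)=4n^2-2n,
\]
not $4n^2-1$; you dropped the $-j$. The data already show this: $S(1)=1+q^2$ has degree $2$, and $S(2)$ has degree $12$. Consequently the identity you defer (``once we check that'') is false. The quantities $4n^2-1-4nj+j^2+j$ and $(2n-1-j)^2+(2n-1-j)$ differ by $2n-1$; for example, at $n=1$, $j=0$ they are $3\neq 2$. With $D=4n^2-1$ your computation would actually give $q^{D}S(n;q^{-1})=q^{2n-1}S(n;q)$, not $S(n;q)$.

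The repair is immediate. Take $D=4n^2-2n$; then $D-(4nj-j^2-k^2-j)=(2n-1-j)(2n-j)+k^2=m^2+m+k^2$ with $m=2n-1-j$. Your factorial reshuffle ${2n-1-k\brack 2n-1-m}_{q^2}{2n-1-m\brack k}_{q^2}={2n-1-k\brack m}_{q^2}{m\brack k}_{q^2}$ then returns $S(n;q)$, so $P''_{100}(n)$ is palindromic of degree $4n^2-2n$.

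A side remark on $n=0$: $P''_{100}(0)$ is not $1$. With the literal definition it is $1/(1-q)\neq 0=S(0)$. The identity at $n=0$ holds only under the convention $P'_{100}(0)=0$, which Theorem~\ref{thm:P100Rec} already needs at $n=1$ and which also makes \eqref{eq_P''rec} hold at $n=0$ with $S(0)=0$. So either adopt that convention explicitly or restrict the statement to $n\ge 1$.
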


The palindromic property can be proved in the same way as in Theorem \ref{thm:Pc Palindromic}. Observe that the right-hand side of \eqref{Pdoubleprimeexp} is the first sum in \eqref{P100_fermionic_rep}. No need for a correction term is needed, as in \eqref{P100_fermionic_rep}, and the terms are palindromic, as in the expressions of $P_{010}(n)$, $P_{000}(n)$, and $P_{001}(n)$. This further indicates that the use of $P''_{100}(n)$ is a more natural truncation of these objects.

\section{Connection with linear partitions}\label{sec:LinearPartition}

Recall that in Section \ref{sec:Intro} we introduced the interpretations for the sum sides of G\"ollnitz-Gordon and Little G\"ollnitz identities in terms of linear partitions with gap conditions. It turns out that those polynomials $P_c(n)$ we found are also the generating functions of these linear partitions with bounded part size. To see this, we first need the bivariable generating functions for the aforementioned partitions that keep track of both the weight and the largest part.

Giving a partition $\lambda=(\lambda_1,\lambda_2,\ldots,\lambda_{\ell})$, define the refined weight as 
$$x^{\lambda}:=x_1^{\lambda_1}x_2^{\lambda_2}x_3^{\lambda_3}\cdots x_{\ell}^{\lambda_{\ell}}.$$
For the sake of compactness, we also define
$$X_i:=\left\{\begin{array}{cc}
   x_1x_2\cdots x_i  & \text{for}\quad i\geq1,\\
   1 & \text{otherwise.} 
\end{array}\right.$$
Using Partition Analysis, Andrews and Paule~\cite{AndrewsPauleParity}, and the first author~\cite{Li} proved the following.
\begin{theorem}\label{thm:AndrewsPaule}[Andrews and Paule, 2025]
\begin{equation}
\sum_{\lambda\in G_1}x^{\lambda}=1+\sum_{n=1}^{\infty}\frac{X_1^{2}X_2^{2}\cdots X_{n-1}^{2}X_n(1+X_1)(1+X_1X_2)\cdots(1+X_{n-1}X_{n})}{(1-X_1^2)(1-X_2^2)\cdots(1-X_n^2)}.
\end{equation}   
\end{theorem}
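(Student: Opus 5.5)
We prove Theorem~\ref{thm:AndrewsPaule} with partition analysis. The idea is to sum over the number of parts $n$ and then eliminate the Omega variables from the smallest part upward.

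\textbf{Setting up the crude form.} Fix the number of parts $n\geq 1$ and write $\lambda_1>\lambda_2>\dots>\lambda_n\geq 1$. The conditions defining $G_1$ are:
\begin{itemize}
\item[(a)] $\lambda_n\geq 1$;
\item[(b)] $\lambda_i-\lambda_{i+1}\geq 2$;
\item[(c)] if $\lambda_i-\lambda_{i+1}=2$, then $\lambda_{i+1}$ is odd.
\end{itemize}
Condition (c) is a parity condition, so it cannot be encoded by a single linear inequality. We handle it by writing each part as $\lambda_i=2\mu_i+\epsilon_i$ with $\epsilon_i\in\{0,1\}$. In these variables the gap condition on the pair $(i,i+1)$ becomes linear, with a constant that depends on $(\epsilon_i,\epsilon_{i+1})$:
\begin{itemize}
\item if $\epsilon_{i+1}=0$ (so $\lambda_{i+1}$ is even), we need $\lambda_i-\lambda_{i+1}\geq 3$, i.e. $2(\mu_i-\mu_{i+1})+\epsilon_i\geq 3$;
\item if $\epsilon_{i+1}=1$, we need only $\lambda_i-\lambda_{i+1}\geq 2$.
\end{itemize}
Each $\epsilon$-pattern then gives an Omega expression in which every inequality $\mu_i-\mu_{i+1}\geq c$ is linear. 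Alternatively, we can keep the original variables $\lambda_i$ and split condition (c) into "gap $\geq 3$" or "gap $=2$ with $\lambda_{i+1}$ odd". Either way we get crude forms of the type in Section~\ref{sec:PA}, with $q$ replaced by the refined weights $x_i$.

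\textbf{Elimination.} We apply \eqref{eq:Elimination1} and \eqref{eq:Elimination2}, plus geometric summation over the parity bits, starting from the smallest part $\lambda_n$ and moving upward. The refined weight $x^\lambda$ makes the substitution structure telescope:
\begin{itemize}
\item increasing every part by one multiplies the weight by $X_n$;
\item increasing the top $i$ parts by two multiplies it by $X_i^2$.
\end{itemize}
The denominator $(1-X_1^2)\cdots(1-X_n^2)$ therefore comes from freely adding even amounts $2t_i$ to the top $i$ parts, which preserves all gap and parity conditions. The numerator records the minimal configurations. For fixed $n$ and fixed parity pattern, the minimal partition has parts $\lambda_n\geq1$ and successive gaps $2$ or $3$. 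The baseline $(2n-1,2n-3,\dots,3,1)$, all odd with gaps $2$, has weight $X_1^2\cdots X_{n-1}^2X_n$. Each factor $(1+X_{i-1}X_i)$ corresponds to the binary choice of shifting the top $i-1$ parts by an extra $1$ and the top $i$ parts by $1$ in a coordinated way. We still have to check that these choices, combined with the even shifts, enumerate each partition in $G_1$ exactly once. In practice the cleanest route is to prove the identity by induction on $n$: condition on whether $\lambda_n$ is odd, and show that the $n$-th summand equals the Omega expression after the first $n-1$ eliminations.

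\textbf{Main obstacle.} The hard step is the bijective/uniqueness check just described. Parity shifts interact with the gap-$2$ rule, so we must show that the decomposition (minimal odd staircase) + (choices from the $(1+X_{i-1}X_i)$ factors) + (even shifts $X_i^{2t_i}$) is unique and always stays in $G_1$. As a first attempt, we would define the choice bits directly from the partition. The bit $\delta_i\in\{0,1\}$ is determined by the parity of $\lambda_i-(2n-2i+1)$ after the even shifts above it are removed. We then check that conditions (b) and (c) are equivalent to $t_i\geq0$ under this parametrisation. If this becomes messy, we fall back on computing small $n$ with the Omega package \cite{OmegaPackage} to confirm the pattern, and then prove the general case by the one-step recursion in $n$ using \eqref{eq:Elimination2}.
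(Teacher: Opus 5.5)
\textbf{There is a genuine gap.} The paper gives no proof of this statement; it quotes it from Andrews--Paule \cite{AndrewsPauleParity} (and \cite{Li}), where it is obtained by partition analysis. Your plan therefore has to stand on its own, and as written it does not.

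The step you call the main obstacle is left unchecked: that every $n$-part $\lambda\in G_1$ arises exactly once from the baseline, the binary choices and the even shifts. Your fallback of checking small $n$ with the Omega package proves nothing for general $n$, and the ``one-step recursion in $n$'' is never specified.

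Worse, the one concrete claim you make about this step is false: the minimal configurations do \emph{not} all have gaps $2$ or $3$. Two even parts cannot differ by $2$, so adjacent even parts differ by at least $4$. For instance, the smallest two-part element of $G_1$ with both parts even is $(6,2)$, whereas ``gaps $2$ or $3$'' would give $(4,2)\notin G_1$.

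The partition-analysis branch has the same hole. Summing $2^n$ separate Omega computations, one per parity pattern, gives no reason why the total factors as $\prod_{i=1}^{n}(1+X_{i-1}X_i)$.

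\textbf{How to close it.} Your ``first attempt'' is the right idea and closes quickly once the lower bounds are stated correctly.

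Write $g_i=\lambda_i-\lambda_{i+1}$ for $i<n$ and $g_n=\lambda_n$, so that $x^{\lambda}=X_1^{g_1}\cdots X_n^{g_n}$. Let $\delta_i=1$ if $\lambda_i$ is even and $\delta_i=0$ if $\lambda_i$ is odd. This is exactly your bit, since $\lambda_i-(2n-2i+1)\equiv\delta_i\pmod 2$.

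Then $g_i\equiv\delta_i+\delta_{i+1}\pmod 2$. The $G_1$ conditions say precisely that $g_i\geq 2+\delta_i+\delta_{i+1}$ for $i<n$ and $g_n\geq 1+\delta_n$. Indeed, an odd gap is automatically $\geq 3$, and an even gap between two even parts is $\neq 2$, hence $\geq 4$. Thus
\[
g_i=2+\delta_i+\delta_{i+1}+2t_i\ (i<n),\qquad g_n=1+\delta_n+2t_n,\qquad t_i\geq 0,
\]
is a bijection between $n$-part partitions in $G_1$ and pairs $(\delta,t)\in\{0,1\}^n\times\mathbb{Z}_{\geq0}^n$.

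For the converse direction, these $g_i$ give $\lambda_i\equiv 1+\delta_i\pmod 2$. So the parities are consistent, and a gap equal to $2$ forces $\delta_{i+1}=0$, i.e.\ $\lambda_{i+1}$ odd.

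With $X_0=1$, the weight factors as
\[
\prod_i X_i^{g_i}=X_1^2\cdots X_{n-1}^2X_n\prod_{i=1}^{n}(X_{i-1}X_i)^{\delta_i}\prod_i X_i^{2t_i}.
\]
Summing over $\delta$ and $t$ gives exactly the $n$-th summand. Note that $\delta_i$ raises both $g_{i-1}$ and $g_i$, so two adjacent even parts automatically receive gap $4$; this is the parity interaction you were worried about.

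If you prefer a recursion, induct on the position from the bottom rather than on the number of parts. Let $O_i,E_i$ be the generating functions of admissible tails $(\lambda_i,\dots,\lambda_n)$ with $\lambda_i$ odd, resp.\ even, weighted by $X_i^{g_i}\cdots X_n^{g_n}$. Then $E_i=X_iO_i$ (add $1$ to $g_i$), which gives $O_i=\frac{X_i^2(1+X_iX_{i+1})}{1-X_i^2}\,O_{i+1}$. Starting from $O_n=X_n/(1-X_n^2)$, the total $(1+X_1)O_1$ telescopes to the same product.
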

\begin{theorem}\label{thm:Li}[Li, 2025]
\begin{equation}\label{eq:RefinedG1'}
\sum_{\lambda\in G_1'}x^{\lambda}=1+\sum_{n=1}^{\infty}\frac{X_1^{2}X_2^{2}\cdots X_n^2(1+X_1)(1+X_1X_2)\cdots(1+X_{n-2}X_{n-1})(1+X_{n-1}/X_n)}{(1-X_1^2)(1-X_2^2)\cdots(1-X_n^2)},
\end{equation}
\begin{equation}\label{eq:RefinedG2'}
\sum_{\lambda\in{G}'_2}x^{\lambda}=1+\sum_{n=1}^{\infty}\frac{X_1^2X_2^2\cdots X_{n}^{2}(1+X_1)(1+X_1X_2)\cdots(1+X_{n-1}X_n)}{(1-X_1^2)(1-X_2^2)\cdots(1-X_n^2)}.
\end{equation}    
\end{theorem}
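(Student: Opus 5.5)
We would prove both refined formulas with MacMahon's Partition Analysis, as in the crude-form computations of Section~\ref{sec:PA}. The plan is to build the generating function over partitions with exactly $n$ parts, eliminate the auxiliary variables with the rules \eqref{eq:Elimination1}--\eqref{eq:Elimination3}, and then sum over $n$.

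\textbf{Encoding.} Fix the number of parts $\ell=n$ and write $\lambda_1\geq\cdots\geq\lambda_n$. The gap condition $\lambda_i-\lambda_{i+1}\geq 2$ holds, and the parity restriction forbids $\lambda_i-\lambda_{i+1}=2$ when $\lambda_{i+1}$ is odd. For $G'_1$ we also need $\lambda_n\geq 1$; for $G'_2$ we need $\lambda_n\geq 2$.

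The key simplification is to parametrize by the differences $d_i=\lambda_i-\lambda_{i+1}$ and the smallest part $d_n=\lambda_n$. Then $x^{\lambda}=\prod_{i}X_i^{d_i}$, so each $d_i$ contributes a geometric-type series in $X_i$. The restriction couples $d_i$ only with the parity of $\lambda_{i+1}=d_{i+1}+\cdots+d_n$.

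\textbf{Removing the parity coupling.} To avoid an Omega operator altogether, we track the parity of the partial sums from the bottom. Write $d_i=2+e_i$ with $e_i\geq 0$, excluding $e_i=0$ when $\lambda_{i+1}$ is odd.

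Equivalently, we split each $d_i$ as $2t_i+\epsilon_i$ with $\epsilon_i\in\{0,1\}$. The parities are then governed by the $\epsilon_j$, and the forbidden configuration is $t_i=1$, $\epsilon_i=0$ with $\lambda_{i+1}$ odd. Summing $t_i$ freely produces the denominators $1/(1-X_i^2)$.

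The numerator factors $(1+X_{i-1}X_i)$ should arise from the following local substitution. Suppose the part $\lambda_i$ would be forbidden. Then, by the standard "shift by one'' bijection, the contribution of the pair $(\epsilon_{i-1},\epsilon_i)$ reorganizes into $1+X_{i-1}X_i$, which records whether both consecutive differences are increased by one.

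In practice, I would first do a transfer-matrix computation. The state is the parity of $\lambda_{i+1}$, and each step multiplies by a $2\times 2$ matrix with entries in $X_i$. One then checks that the product telescopes to the stated product form. This is the main obstacle: guessing the correct change of variables so that the $2\times2$ transfer matrices become triangular, giving the clean factors $(1+X_{i-1}X_i)$.

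\textbf{Boundary terms.} For \eqref{eq:RefinedG2'} the bottom part $\lambda_n\geq 2$ gives the factor $X_n^2/(1-X_n^2)$, times a parity choice contributing $(1+X_{n-1}X_n)$.

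For \eqref{eq:RefinedG1'}, the case $\lambda_n=1$ is allowed. We split by whether $\lambda_n$ is $1$ or at least $2$. The extra contribution replaces $X_{n-1}X_n$ by $X_{n-1}/X_n$ in the last factor, because lowering $\lambda_n$ from an even value $\geq2$ to an odd one removes one power of $X_n$ relative to the other terms.

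\textbf{Verification.} I would confirm the final formulas by comparing the first few $n$ against Omega-package computations, as done for the initial values above. As a consistency check, setting $x_i=q$ (so $X_i=q^i$) should recover \eqref{eq:QseriesG2} and the $G'_1$ sum side.

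Finally, since Theorem~\ref{thm:AndrewsPaule} is the $G_1$ analogue, I would mirror its proof structure from \cite{AndrewsPauleParity}. The only modifications are the odd/even swap and the changed boundary factor.
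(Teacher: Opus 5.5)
Your setup is right: $x^{\lambda}=\prod_i X_i^{d_i}$, the restriction only sees the parity of $\lambda_{i+1}$, and a $2\times 2$ parity transfer matrix is the natural tool. Completed, this direct count would be a more elementary route than the Partition Analysis (Omega operator) proof in \cite{Li}, which the paper cites rather than proves.

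However, the proposal stops at its only nontrivial step. You assert that the matrix product ``telescopes'', then call finding a change of variables that makes the matrices triangular ``the main obstacle'', and never resolve it. Triangularity is also not the right target: after summing over end states, a product of triangular matrices gives a sum over where the off-diagonal step occurs, not the single product $\prod_i(1+X_{i-1}X_i)$.

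The intermediate paragraphs do not fill the gap. In your parametrization $d_i=2t_i+\epsilon_i$ by parities of differences, the lower bound on $t_i$ depends on $\epsilon_{i+1}+\cdots+\epsilon_n$, so nothing decouples. The ``standard shift-by-one bijection'' on the pair $(\epsilon_{i-1},\epsilon_i)$ is never defined. Checking small $n$ against the Omega package, or ``mirroring'' \cite{AndrewsPauleParity}, is not a proof.

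The missing idea is to track parities of the parts, not of the differences. Write your matrix out with rows indexed by the parity of $\lambda_{i+1}$ and columns by the parity of $\lambda_i$. The admissible $d_i$ are even $\ge 2$, odd $\ge 3$, odd $\ge 3$, and even $\ge 4$, so
\[
M_i=\frac{1}{1-X_i^2}\begin{pmatrix} X_i^2 & X_i^3\\ X_i^3 & X_i^4\end{pmatrix}=\frac{X_i^2}{1-X_i^2}\begin{pmatrix}1\\ X_i\end{pmatrix}\begin{pmatrix}1 & X_i\end{pmatrix}.
\]
This matrix has rank one, so the product collapses to scalars.

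Bijectively, let $s_i\in\{0,1\}$ be the parity of $\lambda_i$. Partitions in $G_2'$ (resp.\ $G_1'$) with $n$ parts correspond exactly to free data $t_1,\dots,t_n\ge 0$ and $s_1,\dots,s_n\in\{0,1\}$, via $d_i=2+2t_i+s_i+s_{i+1}$ for $i<n$ and $d_n=2+2t_n+s_n$ (resp.\ $d_n=2+2t_n-s_n$, since odd smallest parts may equal $1$). Hence, with $X_0=1$,
\[
\prod_{i=1}^n X_i^{d_i}=\prod_{i=1}^n X_i^{2+2t_i}\prod_{i=1}^{n-1}(X_{i-1}X_i)^{s_i}\,(X_{n-1}X_n^{\pm 1})^{s_n}.
\]
Summing gives $\prod_{i=1}^n\frac{X_i^2}{1-X_i^2}\prod_{i=1}^{n-1}(1+X_{i-1}X_i)\,(1+X_{n-1}X_n^{\pm 1})$, which is both formulas, including $n=1$.

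This is exactly your ``both consecutive differences increased by one'', but attached to the parity of a single part $\lambda_i$. It also makes your boundary heuristic for $G_1'$ precise. With it, your outline becomes a complete, Omega-free proof.
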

Though the generating function for $G_2$ is missing, readers familiar with Partition Analysis can easily verify the following. 
\begin{theorem}
\begin{equation}
\sum_{\lambda\in{G}_2}x^{\lambda}=1+\sum_{n=1}^{\infty}\frac{X_1^{2}X_2^{2}\cdots X_{n-1}^{2}X_n^{3}(1+X_1)(1+X_1X_2)\cdots(1+X_{n-1}X_{n})}{(1-X_1^2)(1-X_2^2)\cdots(1-X_n^2)}.
\end{equation}    
\end{theorem}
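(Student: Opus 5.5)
The plan is to deduce this formula from Theorem~\ref{thm:AndrewsPaule} by a shift bijection, with no new Omega computation.

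\textbf{Step 1: the summands of Theorem~\ref{thm:AndrewsPaule} separate by number of parts.} In the refined weight $x^{\lambda}$, a partition with exactly $n$ parts gives a monomial involving exactly $x_1,\dots,x_n$, with $x_n$ to a positive power. I would check that the $n$-th summand of Theorem~\ref{thm:AndrewsPaule} has the same property:
\begin{itemize}
\item it involves only $x_1,\dots,x_n$;
\item its numerator contains the factor $X_n$, and every other factor, after expanding $1/(1-X_i^2)$ as a geometric series, has only nonnegative exponents, so every monomial contains $x_n$ to a positive power.
\end{itemize}
Hence the summand with index $m$ cannot contribute to $n$-part partitions when $m\neq n$. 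Matching monomials by the largest index of a variable present, the $n$-th summand is exactly $\sum x^{\lambda}$ over $\lambda\in G_1$ with exactly $n$ parts.

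\textbf{Step 2: a shift bijection from $G_1$ to $G_2$.} Fix $n$ and define $\phi(\lambda_1,\dots,\lambda_n)=(\lambda_1+2,\dots,\lambda_n+2)$ on $n$-part partitions.
\begin{itemize}
\item Differences between consecutive parts are unchanged, so the gap condition (gaps $\geq 2$) is preserved.
\item Parities are unchanged, so the condition of no consecutive even parts is preserved.
\item The smallest-part condition $\geq 1$ becomes $\geq 3$, which is the condition defining $G_2$.
\end{itemize}
So $\phi$ is a bijection between the $n$-part partitions in $G_1$ and those in $G_2$. On weights it gives $x^{\phi(\lambda)}=x^{\lambda}\,x_1^2\cdots x_n^2=x^{\lambda}X_n^2$.

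\textbf{Step 3: assemble the formula.} Multiplying the $n$-th summand of Theorem~\ref{thm:AndrewsPaule} by $X_n^2$ turns its numerator $X_1^2\cdots X_{n-1}^2X_n$ into $X_1^2\cdots X_{n-1}^2X_n^3$. The factors $(1+X_1)\cdots(1+X_{n-1}X_n)$ and the denominators are untouched. Summing over $n\ge1$ and adding $1$ for the empty partition gives the stated identity.

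The only step needing care is Step 1, the separation by number of parts, and the positivity observation above settles it. As an alternative, one could redo the Omega elimination of \cite{Li} with the initial constraint $\lambda_n\geq 3$ in place of $\lambda_n\ge 1$. That computation only changes the exponent on the last variable, and the bijection makes it unnecessary.
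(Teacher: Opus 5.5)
Your proof is correct, and it takes a different route from the paper. The paper writes no argument; it says the formula follows by repeating the Partition Analysis of Andrews--Paule and Li. That means building the crude Omega form for partitions with the gap constraints and smallest part $\geq 3$, eliminating the auxiliary variables, and summing over the number of parts. You instead reduce everything to Theorem~\ref{thm:AndrewsPaule}, in two steps:
\begin{enumerate}
\item Your Step 1 correctly identifies the $n$-th summand as the generating function of the $n$-part members of $G_1$. Every monomial of that summand has top variable exactly $x_n$, and $x^{\lambda}$ determines $\lambda$, so nothing can leak between summands.
\item The shift by $2$ is a bijection between the $n$-part members of $G_1$ and of $G_2$. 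It preserves differences and parities, so both gap conditions survive, and the smallest-part bound moves from $1$ to $3$. It multiplies weights by $x_1^2\cdots x_n^2=X_n^2$.
\end{enumerate}

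What your route buys is that it needs no new elimination. It also explains why only the exponent of $X_n$ changes: this is the refined form of $q^{n^2}\mapsto q^{n^2+2n}$ between the sum sides for $G_1$ and $G_2$.

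What it costs is that it rests on $G_2$ being exactly a shift of $G_1$, which is special to this pair. For the little G\"ollnitz sets $G'_1$ and $G'_2$, a shift by $1$ swaps parities and destroys the ``no consecutive odd parts'' condition. That is why the two formulas in Theorem~\ref{thm:Li} differ in their last factor rather than by a power of $X_n$. The Omega computation the paper alludes to is self-contained and treats all four sets uniformly.
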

The proof follows the same process as Theorem \eqref{thm:AndrewsPaule} and Theorem \ref{thm:Li}, so we skip it. Let $x_1\to zq$ and $x_i\to q$ for $i>1$, which implies $X_i\to zq^{i}$ for $i\geq1$, we have the following bivariable generating functions.
\begin{corollary}
Let $\text{lar}(\lambda)$ be the largest part of $\lambda$,
\begin{equation}\label{eq:G1Lar}
\sum_{\lambda\in G_1}z^{\text{lar}(\lambda)}q^{|\lambda|}=(1+zq)\left(1+\sum_{n=1}^{\infty}\frac{z^{2n-1}q^{n^{2}}(-z^2q^3;q^2)_{n-1}}{(z^2q^2;q^2)_n}\right),
\end{equation}
\begin{equation}\label{eq:G2Lar}
\sum_{\lambda\in G_2}z^{\text{lar}(\lambda)}q^{|\lambda|}=(1+zq)\left(1+\sum_{n=1}^{\infty}\frac{z^{2n+1}q^{n^{2}+2n}(-z^2q^3;q^2)_{n-1}}{(z^2q^2;q^2)_n}\right),
\end{equation}
\begin{equation}\label{eq:G'1Lar}
\sum_{\lambda\in G'_1}z^{\text{lar}(\lambda)}q^{|\lambda|}=(1+q^{-1})(1+zq)\left(1+\frac{z^2q^2}{1-z^2q^2}+\sum_{n=2}^{\infty}\frac{z^{2n}q^{n^2+n}(-z^2q^{3};q^2)_{n-2}}{(z^2q^2;q^2)_{n}}\right),
\end{equation}
\begin{equation}\label{eq:G'2Lar}
\sum_{\lambda\in G'_2}z^{\text{lar}(\lambda)}q^{|\lambda|}=(1+zq)\left(1+\sum_{n=1}^{\infty}\frac{z^{2n}q^{n^2+n}(-z^2q^3;q^2)_{n-1}}{(z^2q^2;q^2)_{n}}\right).
\end{equation} 
\end{corollary}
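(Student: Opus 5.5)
The statement is the Corollary: the four bivariate generating functions tracking the largest part. My plan is to specialize the refined generating functions of Theorems~\ref{thm:AndrewsPaule}, \ref{thm:Li} and the $G_2$ analogue. I set $x_1=zq$ and $x_i=q$ for $i\ge2$, so that $X_i=zq^i$ for $i\ge1$ and $X_0=1$. The monomial $x^\lambda$ becomes $z^{\lambda_1}q^{|\lambda|}=z^{\mathrm{lar}(\lambda)}q^{|\lambda|}$, because only $x_1$ carries $z$ and $x_1$ appears with exponent $\lambda_1$. So each left-hand side is exactly the specialization of the corresponding refined sum, and everything reduces to simplifying the $n$-th summand.

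For $G_1$, the pieces of the $n$-th summand specialize as follows:
\[
\prod_{i=1}^{n-1}X_i^2\cdot X_n = z^{2n-1}q^{(n-1)n+n}=z^{2n-1}q^{n^2},
\]
\[
\prod_{i=1}^{n}(1-X_i^2)=(z^2q^2;q^2)_n .
\]
For the numerator factors, $1+X_1=1+zq$ and $1+X_{i-1}X_i=1+z^2q^{2i-1}$ for $i\ge2$. Hence
\[
(1+X_1)\prod_{i=2}^n(1+X_{i-1}X_i)=(1+zq)(-z^2q^3;q^2)_{n-1}.
\]
The remaining task is to justify the overall factor $(1+zq)$ multiplying the leading $1$ as well. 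That factor does not appear termwise, so I would establish it as a genuine identity. Writing $S=\sum_{n\ge1}t_n$ with $t_n=z^{2n-1}q^{n^2}(-z^2q^3;q^2)_{n-1}/(z^2q^2;q^2)_n$, the claim $1+(1+zq)S=(1+zq)(1+S)$ would force $zq=0$. So the termwise reading must be corrected: either the $1+X_1$ factor is meant to be absorbed differently, or a telescoping/regrouping is needed.

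I would handle this obstacle by direct check of low-order coefficients in $z$. The $z^1$ coefficient of the sum side is $q$ (the partition $(1)$), and both forms give $q$. I would then compare $z^2$ and $z^3$ to determine whether the printed normalization is right. If it is not, I would prove the correct normalization instead. In either case I would derive it via a recurrence in the largest part: conditioning on whether $1$ is a part splits $G_1$ into $(1+zq)$ times the generating function of partitions with parts $\ge3$ satisfying the same gap rules. I would then re-index by shifting parts down.

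The $G_2$ case is identical with $X_n^3$, giving $z^{2n+1}q^{n^2+2n}$. For $G_2'$ the product runs over all $X_i^2$, giving $z^{2n}q^{n^2+n}$. For $G_1'$ the last factor $1+X_{n-1}/X_n=1+q^{-1}$ (for $n\ge2$) produces the $(1+q^{-1})$ prefactor and shortens the Pochhammer to $(-z^2q^3;q^2)_{n-2}$. The $n=1$ term, where $X_0/X_1=1/(zq)$, must be treated separately, which accounts for the isolated $z^2q^2/(1-z^2q^2)$ term. I expect the main obstacle to be exactly this bookkeeping of the $n=1$ terms and the global prefactors. The plan there is to verify against the sets directly using the first-part decomposition and then reconcile the constants.
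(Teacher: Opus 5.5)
Your route is exactly the paper's: its entire argument is the substitution $x_1\to zq$, $x_i\to q$ $(i\ge2)$, and your termwise computation of the $n$-th summand is correct. The gap is in how you handle the mismatch you found.

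With your $t_n$, the substitution yields $1+(1+zq)\sum_{n\ge1}t_n$, and that \emph{is} the generating function. The printed $(1+zq)\bigl(1+\sum_{n\ge1}t_n\bigr)$ is false, so no regrouping, telescoping, or auxiliary recurrence can justify the global factor $(1+zq)$.

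Your low-order check, which would have settled this immediately, is miscomputed. Since $t_1=zq/(1-z^2q^2)$, the printed right-hand side has $z^1$-coefficient $q+q=2q$, while only the partition $(1)$ has largest part $1$. Similarly, the printed $G_2$ and $G'_2$ formulas have $z^1$-coefficient $q$, although all parts in those sets are at least $2$. The error also propagates in the paper: at $n=1$ the right-hand side of Theorem~\ref{lastthm}, which is derived from the printed $G_1$ formula, equals $1+2q+q^2+q^3+q^4$, while the left-hand side is $1+q+q^2+q^3+q^4$.

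Your fallback derivation would not work anyway. Conditioning on whether $1$ is a part cannot split off a factor $(1+zq)$, because $z$ marks the largest part. Adjoining a part $1$ to a nonempty partition multiplies its weight by $q$, not $zq$. Moreover, partitions without a part $1$ have parts $\ge2$, not $\ge3$.

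For $G'_1$ you accepted the global prefactor $(1+q^{-1})$ without checking it, and it is also wrong. Only the summands with $n\ge2$ contain $1+X_{n-1}/X_n=1+q^{-1}$. The $n=1$ summand is $X_1^2(1+X_1^{-1})/(1-X_1^2)=zq/(1-zq)$. Together with the leading $1$ this gives $1/(1-zq)=(1+zq)\bigl(1+\frac{z^2q^2}{1-z^2q^2}\bigr)$, with no factor $1+q^{-1}$. The printed formula therefore exceeds the true series by $q^{-1}/(1-zq)$; for instance, its $z^0$- and $z^1$-coefficients are $1+q^{-1}$ and $1+q$ instead of $1$ and $q$.

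What the substitution actually proves, with nothing further to do, is
\begin{align*}
\sum_{\lambda\in G_1}z^{\text{lar}(\lambda)}q^{|\lambda|}&=1+(1+zq)\sum_{n=1}^{\infty}\frac{z^{2n-1}q^{n^{2}}(-z^2q^3;q^2)_{n-1}}{(z^2q^2;q^2)_n},\\
\sum_{\lambda\in G'_1}z^{\text{lar}(\lambda)}q^{|\lambda|}&=\frac{1}{1-zq}+(1+q^{-1})(1+zq)\sum_{n=2}^{\infty}\frac{z^{2n}q^{n^2+n}(-z^2q^{3};q^2)_{n-2}}{(z^2q^2;q^2)_{n}},
\end{align*}
together with the analogues of the first identity for $G_2$ and $G'_2$. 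These are obtained by replacing $z^{2n-1}q^{n^2}$ with $z^{2n+1}q^{n^2+2n}$ and $z^{2n}q^{n^2+n}$, respectively. These corrected identities, not the printed ones, are what you should state and prove.
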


Let $G_{1}(n)$ generating function for partitions in $G_{1}$ with parts bounded by $4n-1$, $G_{2}(n)$ be the generating function for partitions in $G_{2}$ with parts bounded by $4n-3$, $G'_{1}(n)$ generating function for partitions in $G'_{1}$ with parts bounded by $4n-2$, and $G'_{2}(n)$ be the generating function for partitions in $G'_{2}$ with parts bounded by $4n-1$. Also, we define the modified $q$-binomial coefficients as
$${n 
\brack m}'_q=:\left\{\begin{array}{cc}
    1 & \text{if $n<0$ and $m=0$}, \\
    {n\brack m} & \text{otherwise.} 
\end{array}\right.$$
\begin{theorem}
\begin{equation}\label{eq:G1n}
\begin{split}
G_1(n)=&\sum_{i+j+k\leq 2n}q^{j^{2}+2i+k^{2}+2k}{j+i-1\brack i}'_{q^{2}}{j-1\brack k}'_{q^{2}}\\
&\hspace{3cm}+\sum_{i+j+k\leq 2n-1}q^{j^{2}+2i+k^{2}+2k+1}{j+i-1\brack i}'_{q^{2}}{j-1\brack k}'_{q^{2}},   
\end{split}
\end{equation} 
\begin{equation}\label{eq:G2n}
\begin{split}
G_2(n)=&\sum_{i+j+k\leq 2n-2}q^{j^{2}+2j+2i+k^{2}+2k}{j+i-1\brack i}'_{q^{2}}{j-1\brack k}'_{q^{2}}\\
&\hspace{3cm}+\sum_{i+j+k\leq 2n-3}q^{j^{2}+2j+2i+k^{2}+2k+1}{j+i-1\brack i}'_{q^{2}}{j-1\brack k}'_{q^{2}}, 
\end{split}  
\end{equation}
\begin{equation}\label{eq:G'1n}
\begin{split}
G'_1(n)=&(1+q^{-1})\left(\sum_{i+j+k\leq 2n-1}q^{j^{2}+j+2i+k^{2}+2k}{j+i-1\brack i}'_{q^{2}}{j-2\brack k}'_{q^{2}}\right.\\
&\hspace{3cm}\left.+\sum_{i+j+k\leq 2n-2}q^{j^{2}+j+2i+k^{2}+2k+1}{j+i-1\brack i}'_{q^{2}}{j-2\brack k}'_{q^{2}}\right)-\frac{1}{q},
\end{split}    
\end{equation}

\begin{equation}\label{eq:G'2n}
\begin{split}
G'_2(n)=&\sum_{i+j+k\leq 2n-1}q^{j^{2}+j+2i+k^{2}+2k}{j+i-1\brack i}'_{q^{2}}{j-1\brack k}'_{q^{2}}\\
&\hspace{3cm}+\sum_{i+j+k\leq 2n-2}q^{j^{2}+j+2i+k^{2}+2k+1}{j+i-1\brack i}'_{q^{2}}{j-1\brack k}'_{q^{2}}.
\end{split}   
\end{equation}
\end{theorem}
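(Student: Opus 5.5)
The plan is to obtain each bounded generating function from the bivariate generating functions \eqref{eq:G1Lar}--\eqref{eq:G'2Lar}. I would extract the total coefficient of $z^{m}$ for $m\le N$, where $N=4n-1,4n-3,4n-2,4n-1$ respectively. I treat $G_1(n)$ in detail; the other three follow the same template with the shifted exponents $2j$, $j$ and the modified binomial ${j-2\brack k}'$.

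First I separate out the factor $(1+zq)$. If $H(z)=\sum_m h_m z^m$ denotes the bracketed series in \eqref{eq:G1Lar}, then the coefficients with $z$-degree at most $N$ in $(1+zq)H(z)$ are
\[
\sum_{m\le N}h_m+q\sum_{m\le N-1}h_m .
\]
This explains the two sums in \eqref{eq:G1n}: the second sum carries an extra $q^{1}$ and has a cutoff one lower. Parity matters here. In $G_1$ the series $H$ has only odd $z$-powers besides the constant term, so when $N=4n-1$ the two truncations correspond to $2n$ and $2n-1$ in the half-degree variable. The constant term $1$ has to be absorbed via the convention ${-1\brack 0}'=1$ at $j=0$.

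Next I expand each summand of $H$ as a power series in $z$. For the denominator I use
\[
\frac{1}{(z^2q^2;q^2)_j}=\sum_{i\ge0}{j+i-1\brack i}_{q^2}z^{2i}q^{2i}.
\]
For the numerator I use the $q$-binomial theorem:
\[
(-z^2q^3;q^2)_{j-1}=\sum_{k\ge0}{j-1\brack k}_{q^2}q^{k^2+2k}z^{2k}.
\]
Then the term indexed by $(j,i,k)$ has $z$-degree $2j-1+2i+2k$ and $q$-exponent $j^2+2i+k^2+2k$. Imposing $2(i+j+k)-1\le N$ gives $i+j+k\le 2n$, and for the shifted copy it gives $i+j+k\le 2n-1$. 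This is exactly \eqref{eq:G1n}.

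I then check the boundary cases. The $j=0$ term is the constant $1$, which must be matched by ${j+i-1\brack i}'$ and ${j-1\brack k}'$ forcing $i=k=0$. One subtlety is that ${i-1\brack i}_{q^2}$ vanishes for $i\ge1$ under the paper's convention, while ${-1\brack 0}'=1$. So only $(i,j,k)=(0,0,0)$ survives when $j=0$, as required.

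For $G_2$ the degree is $2j+1+2i+2k\le 4n-3$, which gives $i+j+k\le 2n-2$. For $G'_2$ it is $2j+2i+2k\le 4n-1$, which gives $i+j+k\le 2n-1$. In each case the $q$-exponent gains $2j$ or $j$ as in the statement.

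The main obstacle is $G'_1$. Its series has the irregular $n=1$ term $z^2q^2/(1-z^2q^2)$ and uses $(-z^2q^3;q^2)_{n-2}$. I would first verify that the $j=1$ term fits the general pattern via ${-1\brack k}'$, which equals $1$ only for $k=0$, and that $j=0$ again gives $1$.

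The overall factor $(1+q^{-1})$ multiplies the constant term too, producing $1+q^{-1}$ where the actual generating function has $1$. The correction $-1/q$ accounts for this. This is exactly where I would double-check against small $n$ using $F_{001}(1)$-type data. Finally, the truncation $4n-2$ is even. So the main sum uses $2(i+j+k)\le 4n-2$, i.e.\ $i+j+k\le 2n-1$, and the $zq$-shifted copy uses $2(i+j+k)\le 4n-3$, i.e.\ $i+j+k\le 2n-2$. This completes the template.
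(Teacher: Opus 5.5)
\textbf{There is a genuine gap: the input series is wrong, so the identity you derive is false.} Your route is the paper's own. You expand $(-z^2q^3;q^2)_{j-1}$ by \eqref{eq:FQBinom1} and $1/(z^2q^2;q^2)_j$ by \eqref{eq:FQBinom2}, split off $1+zq$, and truncate by $z$-degree. The paper's proof makes the identical move from the same mis-specialised input.

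\textbf{Where the error enters.} With $X_0=1$, the product $(1+X_1)(1+X_1X_2)\cdots(1+X_{n-1}X_n)$ in Theorem~\ref{thm:AndrewsPaule} specialises to $(1+zq)(-z^2q^3;q^2)_{n-1}$ for every $n\ge1$. Hence
\[
\sum_{\lambda\in G_1}z^{\mathrm{lar}(\lambda)}q^{|\lambda|}=1+(1+zq)\sum_{n\ge1}\frac{z^{2n-1}q^{n^2}(-z^2q^3;q^2)_{n-1}}{(z^2q^2;q^2)_n},
\]
so $1+zq$ multiplies only the $n\ge1$ terms, not the leading $1$. The printed \eqref{eq:G1Lar} carries a spurious $zq$: its coefficient of $z^1$ is $2q$, although only the partition $(1)$ has largest part $1$. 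In your write-up this enters at ``the constant term $1$ is absorbed via ${-1\brack 0}'=1$''. In the shifted copy, the term $(i,j,k)=(0,0,0)$ then contributes a $q$ that counts no partition.

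\textbf{Counterexamples.} $G_1(1)=1+q+q^2+q^3+q^4$, from $\emptyset,(1),(2),(3),(3,1)$, in agreement with $P_{010}(1)$. The right-hand side of \eqref{eq:G1n} at $n=1$ is $(1+q+q^3+q^4)+(q+q^2)=1+2q+q^2+q^3+q^4$. The same extra $q$ appears in \eqref{eq:G2n} for $n\ge2$: $G_2(2)=1+q^3+q^4+q^5+q^8$, but the formula adds $q$. It also appears in \eqref{eq:G'2n}. For $G_1$, $G_2$, $G'_2$ the fix is to restrict the shifted sums to $j\ge1$.

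\textbf{Two further mismatches in your template.}
\begin{enumerate}
\item For $G'_2$ with parts $\le 4n-1$, apply your own degree count to the shifted copy. It gives $2(i+j+k)+1\le 4n-1$, i.e.\ $i+j+k\le 2n-1$, not the printed $2n-2$. The printed cutoffs belong to the bound $4n-2$. At $n=1$ the printed right side is $1+q+q^2$, whereas $G'_2(1)=1+q^2+q^3$.
\item For $G'_1$, the factor $1+q^{-1}$ does not spoil only the constant term. The $n\le1$ part of \eqref{eq:G'1Lar} is $(1+q^{-1})(1+zq)/(1-z^2q^2)=(1+q^{-1})/(1-zq)$. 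Partitions with at most one part actually contribute only $1+zq/(1-zq)=1/(1-zq)$. After truncation one must therefore subtract $q^{-1}(1+q+\cdots+q^{4n-2})$, not just $q^{-1}$. At $n=1$ the printed right side of \eqref{eq:G'1n} is $2+2q+q^2$, against $G'_1(1)=1+q+q^2$.
\end{enumerate}

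The small-$n$ check you postponed is exactly what exposes all of this, so it cannot be skipped. Once the bivariate series are corrected, your expansion-and-truncation argument goes through unchanged and proves the corrected identities.
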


\begin{proof}
We shall take \eqref{eq:G1n} as an example; the other three can be proved in the same way. We will need the following finite versions of $q$-binomial theorem \cite[p. $36$, $(3.3.6)$, $(3.3.7)$]{A}.
\begin{equation}\label{eq:FQBinom1}
 (z;q)_{n}=\sum_{k=0}^{n}(-1)^kz^{k}q^{\binom{k}{2}}{n\brack k}_{q},   
\end{equation}
\begin{equation}\label{eq:FQBinom2}
 \frac{1}{(z;q)_{n}}=\sum_{k=0}^{n}z^{k}{n+k-1\brack k}_{q}.  
\end{equation}
By \eqref{eq:G1Lar},
\begin{align}
\nonumber\sum_{\lambda\in G_1}z^{\text{lar}(\lambda)}q^{|\lambda|}=&(1+zq)\left(1+\sum_{n=1}^{\infty}\frac{z^{2n-1}q^{n^{2}}(-z^2q^3;q^2)_{n-1}}{(z^2q^2;q^2)_n}\right)\\
\nonumber
=&(1+zq)\left(1+\sum_{n=1}^{\infty}\frac{z^{2n-1}q^{n^{2}}}{(z^2q^2;q^2)_n}\sum_{k=0}^{n-1}z^{2k}q^{k^2+2k}{n-1\brack k}_{q^2}\right)\\
\nonumber
&\hspace{4cm}\text{(by \eqref{eq:FQBinom1} with $z\to -z^2q^3$ and $q\to q^2$)}\\
\nonumber
=&(1+zq)\left(1+\sum_{n=1}^{\infty}z^{2n-1}q^{n^{2}}\sum_{i=0}^{n}z^{2i}q^{2i}{n+i-1\brack i}_{q^2}\sum_{k=0}^{n-1}z^{2k}q^{k^2+2k}{n-1\brack k}_{q^2}\right)\\
\nonumber
&\hspace{4cm}\text{(by \eqref{eq:FQBinom2} with $z\to z^2q^2$ and $q\to q^2$)}\\
\label{GG1_z_genF}
=&(1+zq)\sum_{i,j,k\geq0}z^{2i+2j+2k-1}q^{j^2+2i+k^2+2k}{j+i-1\brack i}'_{q^2}{j-1\brack k}'_{q^2}.
\end{align}
Since in $G_1(n)$ we want the largest part to be bounded by $4n-1$, by collecting those terms in the series, we finish the proof of \eqref{eq:G1n}. \eqref{eq:G2n}, \eqref{eq:G'1n}, and \eqref{eq:G'2n} follow the same argument, so we finish the proof. 
\end{proof}

Notice that by bounding the exponent of $z$ in \eqref{GG1_z_genF} (and its analogs), one gets explicit sum representations for the generating functions related to G\"ollnitz--Gordon and little G\"ollnitz partition identities with the extra condition on the largest part. These are exactly the same generating functions that were represented in the earlier sections. Explicitly, with the bounds chosen, the right-hand sides of \eqref{eq:G1n}, \eqref{eq:G2n}, \eqref{eq:G'1n}, and \eqref{eq:G'2n} coincide with the right-hand sides of $P_{010}(n)$ in \eqref{P010_fermionic_rep}, $P_{000}(n)$ in \eqref{P000_fermionic_rep}, $P_{001}(n)$ in \eqref{P001_fermionic_rep}, and $P_{100}(n)$ in \eqref{P100_fermionic_rep}, respectively. We write the first one to highlight this connection:

\begin{theorem}\label{lastthm}For non-negative integer $n$, we have
    \begin{align*}\sum_{j,k\geq 0} q^{j^2 + k^2} {2n - k\brack j}_{q^2} {j\brack k}_{q^2}&=\sum_{i+j+k\leq 2n}q^{j^{2}+2i+k^{2}+2k}{j+i-1\brack i}'_{q^{2}}{j-1\brack k}'_{q^{2}}\\
&\hspace{3cm}+\sum_{i+j+k\leq 2n-1}q^{j^{2}+2i+k^{2}+2k+1}{j+i-1\brack i}'_{q^{2}}{j-1\brack k}'_{q^{2}}.\end{align*}
\end{theorem}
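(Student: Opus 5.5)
The plan is to show that both sides of Theorem~\ref{lastthm} equal the generating function $G_1(n)$ of partitions in $G_1$ with largest part at most $4n-1$.

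\textbf{Step 1 (right-hand side).} By \eqref{eq:G1n}, the right-hand side is exactly $G_1(n)$. That identity came from \eqref{GG1_z_genF}, where the monomial $z^{2i+2j+2k-1}$ coming from the sum records the largest part. The $(1+zq)$ factor raises this exponent by one in the shifted copy. So the two sums are precisely the truncations $2i+2j+2k-1\le 4n-1$ and $2i+2j+2k\le 4n-1$. The only thing to check is the boundary term $i=j=k=0$, which must contribute $1$ (the empty partition) through the primed binomials.

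\textbf{Step 2 (left-hand side).} I show the left-hand side also equals $G_1(n)$. Two routes are available.

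\emph{Combinatorial route.} Cite the Berkovich--McCoy--Orrick result quoted after Theorem~\ref{thm:P010_fermionic_rep}: the double sum $\sum q^{j^2+k^2}{2n-k\brack j}_{q^2}{j\brack k}_{q^2}$ is the generating function for partitions with parts $\le 4n-1$ satisfying the G\"ollnitz--Gordon gap conditions of $G_1$. Combined with Step 1, this finishes the proof.

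\emph{Recurrence route (self-contained).} Since $P_{010}(n)$ equals the left-hand side by Theorem~\ref{thm:P010_fermionic_rep}, it suffices to show that $G_1(n)$ satisfies the uncoupled recurrence \eqref{P010_rec_uncoupled} and agrees with $P_{010}(0)=1$ and $P_{010}(1)$. I would do this in two stages.
\begin{itemize}
\item[(a)] Derive a coupled system combinatorially. Split $G_1$-partitions with parts $\le 4n-1$ according to which of the parts $4n-1, 4n-2, 4n-3, 4n-4$ occur, subject to the difference-$2$ and no-consecutive-evens rules. This gives a first-order system in $G_1(n)$ and an auxiliary sequence (for example, the generating function with largest part $\le 4n-3$ and no part $4n-4$). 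I expect this system to mirror \eqref{P010_rec_coupled}--\eqref{P010p_rec_coupled}, possibly after a change of auxiliary sequence.
\item[(b)] Uncouple the system exactly as was done for $P_{010}$.
\end{itemize}
Alternatively, the right-hand side triple sum can be fed directly to the creative-telescoping tools of \cite{qMultiSum} to certify \eqref{P010_rec_uncoupled}. The primed binomial conventions need care at the boundary there.

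\textbf{Initial values.} These are checked directly. For $n=0$ both sides are $1$. For $n=1$, both sides equal $1+q+q^2+q^3+q^4$: the $G_1$ partitions with parts $\le 3$ are $\emptyset,(1),(2),(3),(3,1)$.

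\textbf{Main obstacle.} The main obstacle is the boundary bookkeeping with the primed binomials and the $(1+zq)$ split when extracting the truncation in Step 1. If I rely on the combinatorial citation, the proof is short. Otherwise, certifying the recurrence for a triple sum whose summation range itself depends on $n$ is the hardest step. I would handle it by rewriting $\sum_{i+j+k\le 2n}$ as a sum over $s=i+j+k$ and telescoping in $n$, so that $G_1(n)-G_1(n-1)$ is expressed by the two top layers $s=2n$ and $s=2n-1$.
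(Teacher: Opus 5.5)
Your route is the same as the paper's: you identify the right-hand side with $G_1(n)$ via \eqref{eq:G1n}, and the left-hand side with $G_1(n)$ via Berkovich--McCoy--Orrick. It breaks at exactly the boundary bookkeeping you flagged and then asserted without computing. The term $(i,j,k)=(0,0,0)$ occurs in \emph{both} sums. In the first sum it gives the $1$ for the empty partition. In the second sum, since ${-1\brack 0}'_{q^2}=1$, it gives $q^{1}$, and this term is present for every $n\ge 1$.

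Your $n=1$ check is therefore wrong. The first sum is $1+q+q^3+q^4$ (from $(i,j,k)=(0,0,0),(0,1,0),(1,1,0),(0,2,0)$). The second sum is $q+q^2$ (from $(0,0,0),(0,1,0)$). So the right-hand side is $1+2q+q^2+q^3+q^4$, while the left-hand side and $G_1(1)$ are both $1+q+q^2+q^3+q^4$. In general the right-hand side equals $G_1(n)+q$ for $n\ge1$. The identity as printed holds only for $n=0$, so the statement itself must be corrected before any proof can succeed. Your creative-telescoping alternative would also fail: a nonzero constant is not annihilated by the uncoupled recurrence for $P_{010}(n)$.

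The error comes from the specialization. In \eqref{GG1_z_genF} the $j=0$ summand is $z^{-1}$, a stand-in for the empty partition. The factor $(1+zq)$ turns it into $z^{-1}+q$, and that $q$ corresponds to no partition. Specializing Theorem~\ref{thm:AndrewsPaule} at $X_i\to zq^i$ actually gives $1+(1+zq)\sum_{m\ge1}z^{2m-1}q^{m^2}(-z^2q^3;q^2)_{m-1}/(z^2q^2;q^2)_m$. Here the factor $(1+X_1)$ multiplies only the terms with $m\ge1$. Hence \eqref{eq:G1Lar} carries a spurious $zq$, and so do \eqref{eq:G1n} and the paper's own one-line deduction of this theorem.

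The repair is to restrict the second sum to $j\ge1$. For $j=0$ only $(0,0,0)$ survives, so this just deletes that term; equivalently, subtract $q$ when $n\ge1$. For $j\ge1$ the primed binomials are ordinary ones, and your truncations $2(i+j+k)-1\le 4n-1$ and $2(i+j+k)\le 4n-1$ are exactly right. With this correction your Step~1 holds. Either the Berkovich--McCoy--Orrick citation or your recurrence route for $P_{010}(n)$ then finishes the proof.
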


\section{Outlook}\label{sec:Conclusions}

The extra structure cylindric partitions bring over skew double shifted plane partitions became highly visible in this paper once again. Although we followed the same approach of \cite{LiUncu}, both the recurrences and solving them proven to be more difficult. In fact, the polynomial identities that imply the G\"ollnitz--Gordon identities we identified were due to Berkovich--McKoy--Orrick \cite{Berkovich_McCoy_Orrick}, and we were unable to find a polynomial sequence that converges to the product side of the little G\"ollnitz identities. We plan to pursue identifying polynomial sequences that converge to the product side of little G\"ollnitz identities with the use of some basic hypergeometric summations in the future.

We were able to identify both the refined multi-sum (fermionic) and the bilateral-single-sum (bosonic) representations of the identities when we were studying cylindric partitions with two-element profiles using MacMahon's partition analysis, i.e., when we imposed bounds on the number of non-zero elements on the rows of the cylindric partitions. That led to the observation of how the general bosonic generating functions would look like for all 2-element profiles. We do not have the same structure here. Although we looked at larger profiles, we did not observe a relation between the generating functions of DSPPs with different profiles. However, for the cases we could identify, the summation formula \cite[Theorem 2.1]{BerkUncu} still yielded beautiful infinite hierarchies: Theorem~\ref{main_thm} and its finite versions presented in Sections~\ref{sec:010} and~\ref{sec:000}. 

Proving the formulas for the generating functions for the DSPPs required us to do so combinatorially by studying the objects and later comparing them with the symbolically calculated recurrences (using creative telescoping, see \cite{HolonomicFunctions}). This is no different than in \cite{LiUncu}. However, DSPPs required us to approach the problem with a coupled system first and then to uncouple this system. This is not how we dealt with cylindric partitions. There, for a fixed profile, we were directly calculating the recurrences. We suspect that if we study cylindric partitions with 2-element profiles with a coupled system approach, we can understand the general shape of the recurrences without the need to fix a profile first. That can allow us to directly turn all the conjectures of that paper into theorems without the need for any extra work. We plan to pursue that.

We see Theorem~\ref{lastthm} and its analogs through the generating functions. It is desirable to see a bijective proof that would show that the DSPP's counted by one side of the identities become and transform bijectively into the G\"ollnitz--Gordon and little G\"ollnitz identities related partitions with gap conditions. That would also allow us to possibly identify statistics in the DSPPs and partitions which can later be used to refine and generalize these identities.

We also plan to look into interesting subclasses of cylindric partitions and DSPPs in the near future using the same approach and by putting extra bounds. We expect to discover many interesting polynomial identities as different representations of generating functions, and many new infinite hierarchies of identities to root from them.


\begin{thebibliography}{99}

\bibitem{qFuncs} J. Ablinger and A.K. Uncu, \texttt{qFunctions }\textit{- A Mathematica package for $q$-series and partition theory applications}, J. Symbolic Comput. 107 (2021), 145-166.

\bibitem{AlladiGG} K. Alladi, \textit{Some new observations on the Göllnitz-Gordon and Rogers-Ramanujan identities}. Trans. Amer. Math. Soc., 347(3) (1995), 897–914.

\bibitem{AAG} K. Alladi, G. E. Andrews, and B. Gordon, \textit{Generalizations and refinements of a partition theorem of Göllnitz} J. Reine Angew. Math., 460 (1995). 165–188.

\bibitem{A} G.~E.~Andrews, {\it The Theory of Partitions}, Cambridge University Press (1998).

\bibitem{AndrewsI}
G. E. Andrews, {\it MacMahon's partition analysis I: The lecture hall partition theorem},  Progress in Mathematics, vol 161. Birkhäuser Boston. https://doi.org/10.1007/978-1-4612-4108-9\_1.

\bibitem{AndrewsBerkovich} G. E. Andrews, and A. Berkovich, \textit{A trinomial analogue of Bailey’s lemma and $N = 2$ superconformal invariance}. Communications in Mathematical Physics, 192(2) (1998), 245–260. https://doi.org/10.1007/s002200050298.

\bibitem{AndrewsPauleVIII} G. E. Andrews, P. Paule, and A. Riese,  {\it MacMahon's Partition Analysis: VIII. Plane Partition Diamonds}. Advances in Applied Mathematics, 27(2-3) (2001) pp.231-242.

\bibitem{AndrewsPauleXIII} G. E. Andrews, and P. Paule. {\it MacMahon's partition analysis XIII: Schmidt type partitions and modular forms}. Journal of Number Theory 234 (2022): 95-119.

\bibitem{AndrewsPauleParity}
G. E. Andrews and P. Paule, \emph{MacMahon's partition analysis XV: Parity}, J. Symbolic Comput., \textbf{127} (2025), 102351.

\bibitem{OmegaPackage}
G. E. Andrews, P. Paule, and R. Axel, \emph{MacMahon's partition analysis: the Omega package}, European J. Combin. 22 (2001), no. 7, 887–904.

\bibitem{Berkovich_McCoy_Orrick} A. Berkovich, B. M. McCoy and W. P. Orrick, \textit{Polynomial identities, indices and duality for the $N=1$ superconformal model $SM(2,4\nu)$}, J. Statist Phys. \textbf{83} (1996), no. 5-6, 795-837.

\bibitem{BU_elementary} A. Berkovich, A. K. Uncu, \textit{Elementary polynomial identities involving $q$-trinomial coefficients}. In K. Alladi, B. C. Berndt, P. Paule, J. A. Sellers, and A. J. Yee (Eds.), George E. Andrews 80 Years of Combinatory Analysis (pp. 119–127). Birkhäuser. (2021) https://doi.org/10.1007/978-3-030-57050-7\_11.

\bibitem{BerkUncu} A. Berkovich, A. K. Uncu, {\it New infinite hierarchies of polynomial identities related to the Capparelli partition theorems}. Journal of Mathematical Analysis and Applications 506.2 (2022): 125678.

\bibitem{BridgesUncu} W. Bridges, A. K. Uncu, {\it Weighted cylindric partitions} Journal of Algebraic Combinatorics volume 56, pages 1309–1337 (2022).

\bibitem{CDU} S. Corteel, J. Dousse, and A. K. Uncu. {\it Cylindric partitions and some new $A_2$ Rogers--Ramanujan identities}. Proceedings of the American Mathematical Society 150.2 (2022): 481-497.

\bibitem{HX} G.-N. Han and H. Xiong, {\it Skew doubled shifted plane partitions: calculus and asymptotics}, Amer. Inst. Math. Sci. {\bf 29} (2021),  1841--1857.

\bibitem{HolonomicFunctions}  C. Koutschan, {\it Advanced Applications of the Holonomic Systems Approach}, RISC, Johannes Kepler University,
Linz. PhD Thesis. September 2009.

\bibitem{Li}
R. Li, {\it Partition analysis and the new little Göllnitz identities}, preprint
arXiv:2510.23233.

\bibitem{LiUncu}
R. Li, and A. K. Uncu, {\it A MacMahon analysis view of cylindric partitions}, Ramanujan J. 68 (2025), no. 3, Paper No. 71, 41 pp.

\bibitem{MacMahon}
P. A. MacMahon, {\it Combinatorial analysis, Volumes I and II}, Vol. 137. American Mathematical Society, 2001.

\bibitem{qMultiSum} A. Riese, \textit{qMultiSum - A Package for Proving q-Hypergeometric Multiple Summation Identities}, Journal of Symbolic Computation 35 (2003), 349-376.

\bibitem{doublesums}A. K. Uncu, {\it On double sum generating functions in connection with some classical partition theorems}. Disc. Math J. Volume 344, Issue 11 (2021), 112562.

\bibitem{UW} A. K. Uncu, and W. Zudilin, \textit{Reflecting (on) the modulo 9 Kanade–Russell (conjectural) identities}, S\'{e}m. Lothar. Combin. 85 (2021), Art. B85e, 17 pages.


\end{thebibliography}
\end{document}